\newif\ifextended
\newif\ifdraft
\newcounter{changeno}
\newif\iffigures
\newcommand{\nc}{\newcommand} %make commands faster
\nc{\tri}{\triangle}
\nc{\RP}{\R\mathbb P}
\DeclareMathOperator{\length}{length}
\nc{\TSM}{T\hspace{.1em}T^1M}
\nc{\VSM}{V\hspace{.1em}T^1M}
\nc{\reg}{_{\text{reg}}}
\nc{\flatas}{_{\text{flat}\ast}}
\nc{\sing}{_{\text{sing}}}
\nc{\hex}{\hexagon}
\nc{\CC}{\mathcal C}
\nc{\B}{\mathcal B}
\nc{\M}{\mathcal M}
\renewcommand{\P}{\mathbb P}
\newcommand{\cl}[1]{\overline{#1}} %closure
\newcommand{\then}{\Longrightarrow} % implication
\newcommand{\wt}[1]{\widetilde{#1}} %universal covers
\newcommand{\tl}[1]{\tilde{#1}} % lifts
\newcommand{\Gam}{\Gamma}
\newcommand{\gam}{\gamma}
\newcommand{\Om}{\Omega}
\newcommand{\alp}{\alpha}
\newcommand{\ep}{\epsilon}
\renewcommand{\phi}{\varphi}
\DeclareMathOperator{\SL}{SL} % special linear group
\DeclareMathOperator{\PSL}{PSL} % projective special linear group
\DeclareMathOperator{\Stab}{Stab} % stabilizer
\DeclareMathOperator{\Aut}{Aut}
\DeclareMathOperator{\CAT}{CAT}
\DeclareMathOperator{\bd}{\partial} % boundary operator
\newcommand{\bdv}[1][]{\partial_{{ #1}}} % visual boundary
\DeclareMathOperator{\diam}{diam} % diameter
\DeclareMathOperator{\vol}{vol} % volume
\DeclareMathOperator{\supp}{supp} % support
\newcommand{\field}[1]{\mathbb{#1}} % represent fields with blackboard bold
\newcommand{\R}{\field{R}} % The reals
\newcommand{\Z}{\field{Z}} % The integers
\newcommand{\N}{\field{N}} % The natural numbers
\tikzset{%
  add/.style args={#1 and #2}{to path={%
    ($(\tikztostart)!-#1!(\tikztotarget)$)--($(\tikztotarget)!-#2!(\tikztostart)$)%
  \tikztonodes}}
}
\numberwithin{equation}{section}
\numberwithin{figure}{section}
\numberwithin{table}{section}
\theoremstyle{plain}
\newtheorem{thm}{Theorem}
\numberwithin{thm}{section}
\newtheorem{theorem}[thm]{Theorem}
\newtheorem{proposition}[thm]{Proposition}
\newtheorem{corollary}[thm]{Corollary}
\newtheorem{lemma}[thm]{Lemma}
\newcounter{intro}
\theoremstyle{definition}
\newtheorem{definition}[thm]{Definition}
\newtheorem{remark}[thm]{Remark}
\title[Bowen-Margulis measure for Benoist 3-manifolds: Ext. Version]{Ergodicity of Bowen-Margulis measure for 
some nonstrictly convex Hilbert geometries: Extended version}
\title[Ergodicity of Bowen-Margulis measure for Benoist 3-manifolds]{Ergodicity of Bowen-Margulis measure for 
the Benoist 3-manifolds}
\author{Harrison Bray}
\begin{document}

\maketitle

\begin{abstract}
  We study the geodesic flow of a class of 3-manifolds introduced by Benoist which have some
  hyperbolicity but are non-Riemannian, not $\CAT(0)$, and with non-$C^1$ geodesic flow. The
  geometries are nonstrictly convex Hilbert geometries in dimension three which admit compact
  quotient manifolds by discrete groups of projective transformations.  We prove the
  Patterson-Sullivan density is canonical, with applications to counting, and construct
  explicitly the Bowen-Margulis measure of maximal entropy. The main result of this work is
  ergodicity of the Bowen-Margulis measure. 
\end{abstract}

\ifextended
\begin{remark}
  This version of the paper has been expanded upon for clarity and readability, and is {\bf not for
  publication}.  
\end{remark}
\fi
\section{Introduction}

In 2004, Yves Benoist released the first results on geodesic flows of
compact quotients of properly convex domains in real projective space
endowed with the Hilbert metric,
proving that strict convexity of the domain is equivalent to an Anosov
geodesic flow of the quotient \cite{Ben1}. Not long after, Benoist produced
nontrivial examples of nonstrictly convex domains with compact quotients in
dimension three, and proved rigid geometric properties for these domains
(\cite{Ben4}, see Theorem \ref{thm:ben3mfldgeom}). 

This family of 3-manifolds, whose quotients we call the Benoist 3-manifolds, lack the Anosov
property but have have similar topological properties to nonpositively curved manifolds which are
rank one. Hence they are promising candidates for studying the geodesic flow.  However, the geometry
is only Finsler and not Riemannian, meaning angles are not defined, the natural metric is not
$\CAT(0)$, and the geodesic flow is not $C^1$. In this work, we extend the approach of Knieper for
rank one manifolds \cite{Kn97,Kn98} and study the geodesic flow of properly convex domains in real
projective space, known also as Hilbert geometries, without the strictly convex hypothesis for the
first time.  We prove the following central result: 

\begin{theorem}
  The Bowen-Margulis measure is an ergodic measure of maximal entropy for geodesic flows of the
  Benoist 3-manifolds. 
  \label{thm:mainthm1}
\end{theorem}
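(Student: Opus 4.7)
The plan is to follow the broad strategy Knieper developed for rank one nonpositively curved manifolds \cite{Kn97,Kn98}, but with substantial modifications for the two features that distinguish the Benoist 3-manifolds: the non-strict convexity of $\bd\Om$ (which produces boundary points with nontrivial supporting segments/simplices, analogous to higher-rank directions), and the failure of the geodesic flow to be $C^1$. The starting input is the canonical Patterson--Sullivan density $\{\mu_x\}$ on $\bd\Om$ of critical exponent $\delta$, established earlier in the paper. From $\{\mu_x\}$ I would construct the Bowen--Margulis measure $m_{BM}$ via a Hopf-type parametrization: on the space of ordered pairs of distinct boundary points (thought of as the endpoints of a bi-infinite geodesic) times $\R$ for the time parameter, form
\[
	d\mu_o(\xi)\,d\mu_o(\eta)\,e^{2\delta\,(\xi|\eta)_o}\,dt,
\]
with the Gromov-type product replaced by the Hilbert-geometry analogue built from the Busemann cocycle of the PS-density. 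Conformality of $\{\mu_x\}$ makes this quantity independent of the basepoint $o$ and $\Aut(\Om)$-equivariant, so it descends to a locally finite, flow-invariant measure on the unit tangent bundle of the quotient; finiteness of the quotient measure then follows from cocompactness.

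Next I would prove that $m_{BM}$ has maximal entropy. The key is a Shadow Lemma for the PS-density---PS-mass of a shadow cast from $o$ by a ball at $x$ is comparable to $e^{-\delta\,d(o,x)}$ in the Hilbert metric---combined with Benoist's volume and regularity estimates on the universal cover of a Benoist 3-manifold. These together give both the identification $h_{\mathrm{top}} = \delta$ (by a standard volume-growth argument) and the lower bound $h_{m_{BM}} \geq \delta$ via a Brin--Katok type computation applied to Bowen balls, whose $m_{BM}$-mass can be estimated by transferring back to shadows through the PS-conformal structure. The upper bound $h_{m_{BM}}\le h_{\mathrm{top}}$ is the variational principle, so maximal entropy follows.

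Ergodicity is the substantive obstacle, and the core of my strategy is a Hopf argument adapted to this Finsler, non-$C^1$, non-strictly-convex setting. The two classical ingredients---stable/unstable foliations and their absolute continuity---must both be reinterpreted. First I would show that $m_{BM}$-almost every vector $v$ has its forward and backward endpoints in the \emph{regular} part of $\bd\Om$, i.e.\ in points which are $C^1$-smooth and strictly convex. This uses that by Benoist's structure theorem the non-regular boundary is a countable union of simplices each supported on finitely many orbits of $\Aut(\Om)$; a PS-density computation then shows that the set of pairs of non-regular endpoints has zero $\mu_o\otimes\mu_o$-mass, hence zero $m_{BM}$-measure on the Hopf parametrization. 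On the resulting full-measure set, Busemann functions have well-defined gradients, strong stable/unstable manifolds through $v$ are well defined, and the geodesic flow contracts/expands them at the expected exponential rates governed by $\delta$. I would then establish absolute continuity of the stable holonomy \emph{with respect to the PS-conformal class} on the boundary (rather than a Lebesgue class, which is unavailable), as a direct consequence of the cocycle transformation of $\{\mu_x\}$. Combined with topological transitivity of the flow on $\supp m_{BM}$, this yields via the usual Hopf dichotomy that any flow-invariant bounded measurable function is $m_{BM}$-a.e.\ constant.

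The principal technical difficulty, and where I expect the bulk of the work to lie, is the first step of the Hopf argument: establishing a workable stable/unstable manifold theory for $m_{BM}$-generic $v$ without the usual smooth hyperbolic dynamics. Because the flow is only $C^0$, standard Pesin theory does not apply; instead, one must extract hyperbolicity directly from the projective geometry of $\Om$ at the regular endpoints, using Benoist's duality and automorphism structure to control divergence of nearby orbits. Once that dynamical framework is in place on a set of full $m_{BM}$-measure, the transfer to an ergodicity statement via the conformal Hopf argument is comparatively mechanical.
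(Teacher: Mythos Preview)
Your outline is correct and matches the paper's strategy closely: Patterson--Sullivan density, Sullivan-style Bowen--Margulis construction, a Hopf argument on the full-measure regular set using absolute continuity of the strong unstable conditionals with respect to the conformal class, and a partition estimate for maximal entropy.

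Two points of comparison are worth noting. First, you locate the principal difficulty in building a stable/unstable manifold theory without $C^1$ regularity, but in the paper this step is comparatively clean: for a regular vector $v$ (both endpoints proper extremal) the strong stable set is simply $\{w: w^+=v^+,\ \pi w\in\mathcal H_{v^+}(\pi v)\}$, and uniform contraction of the unstable conditionals along flow lines is an immediate Busemann-cocycle computation. The genuinely delicate step---which you pass over as ``a PS-density computation''---is showing that each $\bd\tri$ is $\mu_x$-null. This is not automatic from countability; it requires combining the Shadow Lemma with the fact that $\Stab_\Gam(\tri)\cong\Z^2$ has only quasi-linear orbit growth in the hexagonal metric on $\tri$, so that the shadows covering $\bdv\tri$ have total mass $\lesssim N_r e^{-\delta r}\to 0$. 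Second, for maximal entropy the paper does not run Brin--Katok but instead uses entropy-expansiveness of $\phi^t$ (proved in a companion paper) to reduce to a single fine partition, together with the inequality $\delta_\Gam\ge h_{\mathrm{top}}$ from the same source; your Brin--Katok route should also work but is not what is done.
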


In seeking the main result, we develop the asymptotic geometry and Patterson-Sullivan measures at
infinity.  Let $\delta_\Gam$ denote the critical exponent of the
fundamental group $\Gam$ acting on the universal cover. It is known that
$\delta_\Gam$ is positive in this setting \cite{braytop}. 

\begin{theorem}
  The universal cover of a Benoist 3-manifold admits a 
%  conformal 
  Busemann density of
  dimension $\delta_\Gam$ called the Patterson-Sullivan density, and
%  conformal
  Busemann densities of the same dimension $\delta>0$ are unique up to
  constant. 
  \label{thm:mainthm3}
\end{theorem}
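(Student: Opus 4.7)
My plan for existence is to adapt Patterson's classical construction to the Hilbert metric $d$. Fix a basepoint $o$ in the hyperbolic part of $\wt{M}$. For $s>\delta_\Gam$ the Poincar\'e series $P(s) = \sum_{\gam \in \Gam} e^{-s d(o,\gam o)}$ converges, and I form the normalized atomic probability measures
\[
\mu_s^o \;=\; P(s)^{-1} \sum_{\gam \in \Gam} e^{-s d(o,\gam o)}\,\delta_{\gam o}
\]
on the compactification $\wt{M}\cup\partial_\infty\wt{M}$. If $P$ diverges at $s=\delta_\Gam$, then since $\Gam o$ is locally finite but the total weight blows up, any weak-$*$ subsequential limit $\mu^o$ of $\mu_s^o$ as $s\searrow\delta_\Gam$ is a probability measure supported on the visual boundary. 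In the convergent case, I would first apply Patterson's slowly varying function trick, replacing $e^{-sd(o,\gam o)}$ by $h(d(o,\gam o))\,e^{-sd(o,\gam o)}$, to force divergence at $\delta_\Gam$ without altering the critical exponent.

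To upgrade $\mu^o$ to a Busemann density, I verify the quasi-invariance rule at the approximant level. For $\alp\in\Gam$, the Radon--Nikodym derivative $d\alp_*\mu_s^o/d\mu_s^o$ at $\gam o$ equals $\exp\bigl(-s[d(o,\alp^{-1}\gam o)-d(o,\gam o)]\bigr)$. As $\gam o\to\xi$ with $\xi$ not asymptotic to a properly embedded triangle, this triangle-inequality cocycle converges to the Busemann cocycle $-\delta_\Gam\,\beta_\xi(o,\alp^{-1}o)$. The basepoint independence of the shadow topology at $o$ in the hyperbolic part (Theorem \ref{thm:mainthm2}) provides the uniform control along non-flat boundary sequences needed to pass this convergence through the weak-$*$ limit, yielding $d\alp_*\mu^o/d\mu^o(\xi)=e^{-\delta_\Gam\beta_\xi(o,\alp^{-1}o)}$ off the flat-asymptotic set. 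Extending to other basepoints by the conformal rule then gives the Busemann density.

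For uniqueness, my plan is to establish a Hilbert-geometry Shadow Lemma: for any Busemann density $\nu^o$ of dimension $\delta>0$, there exist $R_0,C>0$ so that for all $R\ge R_0$ and $\gam\in\Gam$,
\[
C^{-1}\,e^{-\delta d(o,\gam o)} \;\le\; \nu^o\bigl(\mathcal{O}_R(\gam o)\bigr) \;\le\; C\,e^{-\delta d(o,\gam o)},
\]
where $\mathcal{O}_R(\gam o)$ denotes the shadow from $o$ of the Hilbert ball of radius $R$ around $\gam o$. The upper bound follows from the conformal rule together with cocompactness of $\Gam$, and the lower bound requires a pigeonhole argument ruling out atomic concentration of $\nu^o$ on any $\Gam$-orbit in the boundary. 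Two Busemann densities of the same dimension $\delta$ are then mutually absolutely continuous with uniformly bounded Radon--Nikodym derivative by a Vitali-covering argument; this derivative is $\Gam$-invariant by the conformal rule, and a Hopf--Tsuji--Sullivan-style argument using the divergence type forced in the construction yields ergodicity, hence constancy of the derivative.

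The technical heart of both halves is managing the flats: Busemann cocycles fail to converge along boundary sequences asymptotic to a properly embedded triangle, and quasigeodesic stability degenerates near such directions. Since the flats form a codimension-one set and $o$ lies in the hyperbolic part, the plan is to obtain uniform quasigeodesic behavior and cocycle continuity on the complement of a null set. Confirming that the Patterson limit measure charges no flat-asymptotic direction, and that the Shadow Lemma is robust under the weakened transformation rule, will be the main obstacles.
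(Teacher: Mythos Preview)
Your existence argument is essentially the paper's: Patterson's construction with the slowly-varying correction in the convergent case, followed by verification of quasi-$\Gam$-invariance and the transformation rule at proper extremal points (Proposition~\ref{prop:existenceconfdensity}).

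For uniqueness, your outline (Shadow Lemma $\Rightarrow$ mutual absolute continuity $\Rightarrow$ ergodicity $\Rightarrow$ constancy of the Radon--Nikodym derivative) matches the paper's, but two points need sharpening.

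First, the obstacle you flag---that the density might charge flat-asymptotic directions---must be resolved for \emph{every} Busemann density of positive dimension, not only the Patterson limit, since the transformation rule and hence the equivalence and ergodicity arguments are only available on the proper extremal set. The paper's mechanism (Proposition~\ref{prop:nulltriangles}) is a concrete counting argument you do not identify: by Benoist's structure theorem $\Stab_\Gam(\tri)$ is virtually $\Z^2$, so the number of $\gam B_\Om(x,R)$ needed to cover the Hilbert sphere $S_\tri(p,r)$ inside a triangle grows only linearly in $r$ (Lemma~\ref{lem:trianglegrowth}), while the Shadow Lemma forces each such shadow to have mass $\asymp e^{-\delta r}$. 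Summing gives $\mu_x(\bdv\tri)\lesssim r\,e^{-\delta r}\to 0$. Without this polynomial-versus-exponential competition your ``main obstacle'' remains genuinely open.

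Second, your route to ergodicity via a Hopf--Tsuji--Sullivan argument and divergence is heavier than needed and risks circularity: divergence of $\Gam$ is obtained in the paper only \emph{after} uniqueness (Corollary~\ref{cor:divergent}), and Patterson's trick forces divergence only of the modified series, not of an arbitrary Busemann density. The paper instead uses the direct argument (Proposition~\ref{prop:ergodicdensities}): if $A\subset\bdv\Om$ is $\Gam$-invariant with $\mu_x(A)>0$, then $\bar\mu_x(\cdot):=\mu_x(A\cap\cdot)$ is again a Busemann density of dimension $\delta$, hence equivalent to $\mu_x$ by the local estimates, forcing $\mu_x(A^c)=0$. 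This requires only the Shadow Lemma and the null-triangles result already in hand.

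Finally, your ``pigeonhole'' sketch for the lower bound in the Shadow Lemma is too vague in this setting; the paper (Proposition~\ref{prop:geometricprop}) uses the north--south dynamics of two noncommuting hyperbolic elements of $\Gam$ to exhibit a fixed open set lying inside $\mathcal O_r(\gam^{-1}x,x)$ for every $\gam$, after which full support (from minimality of the $\Gam$-action on $\bdv\Om$) gives the uniform lower bound.
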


Let $S_\Om(x,t)$ be the sphere of Hilbert radius $t$ about $x$ and let
$\vol$ be a natural volume on this sphere. 
As in \cite{sullivan79}, Theorem \ref{thm:mainthm3} can be applied to prove: 

\begin{theorem}
  Let $\Om$ be a properly convex, indecomposable Hilbert geometry of dimension three which admits a
  cocompact action by a 
  discrete, torsion-free group $\Gam$ of projective transformations. 
  Then for all $x\in\Om$, there is a constant $a(x)>0$ such that 
  \[
    \frac1a \leq \frac{\vol S_\Om(x,t)}{e^{\delta_\Gam t}} \leq a.
  \]
  \label{thm:mainthm4}
\end{theorem}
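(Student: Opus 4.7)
The plan is to follow Sullivan's strategy from \cite{sullivan79}, with the Patterson-Sullivan density $\mu_{x_0}$ supplied by Theorem \ref{thm:mainthm3} as the central object. Fix the basepoint $x_0$ in the hyperbolic part so Theorem \ref{thm:mainthm2} and the conformal transformation rule apply cleanly, and recover the statement for an arbitrary $x\in\Om$ at the end via the triangle inequality, which only distorts exponentials by a bounded multiplicative constant absorbed in $a(x)$.

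The first technical step is a Sullivan-type shadow lemma: there exists $R_0>0$ such that for every $R\ge R_0$ and every $\gam\in\Gam$,
\[
  \mu_{x_0}\bigl(O_R(\gam x_0)\bigr) \asymp e^{-\delta_\Gam\, d_\Om(x_0,\gam x_0)},
\]
where $O_R(\gam x_0)$ denotes the shadow cast at $x_0$ by the Hilbert ball of radius $R$ about $\gam x_0$. The upper bound comes from the conformal transformation rule applied to $\gam_*\mu_{x_0}=\mu_{\gam x_0}$, since Busemann cocycles at boundary points lying in such a shadow differ from $d_\Om(x_0,\gam x_0)$ by a bounded additive error. For the lower bound, the transformation rule together with $\Gam$-equivariance reduces the estimate to uniform positivity of a fixed shadow, which follows from the construction of $\mu_{x_0}$ in Theorem \ref{thm:mainthm3} and its support away from the flat directions.

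The shadow lemma yields the standard orbital counting asymptotic
\[
  \#\{\gam\in\Gam : d_\Om(x_0,\gam x_0)\le t\} \asymp e^{\delta_\Gam t},
\]
because shadows of orbit points within distance $t$ cover the visual boundary with bounded multiplicity, each carrying $\mu_{x_0}$-mass $\asymp e^{-\delta_\Gam d_\Om(x_0,\gam x_0)}$. Choosing a compact fundamental domain $D$ of diameter at most $R$, the annulus between spheres of radii $t-R$ and $t+R$ is tiled up to bounded overlap by translates $\gam D$ for $\gam$ with $d_\Om(x_0,\gam x_0)\in[t-2R,t+2R]$. Since each $\gam D$ has the same Hilbert volume as $D$, the orbit count upgrades to an annular volume estimate $\vol\bigl(B_\Om(x_0,t+R)\setminus B_\Om(x_0,t-R)\bigr)\asymp e^{\delta_\Gam t}$. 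The sphere statement then follows by a Finsler co-area argument, using that Hilbert balls have sufficiently regular boundaries away from a negligible set to integrate $\vol S_\Om(x_0,s)$ over a bounded window in $s$.

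The main obstacle is the shadow lemma in the presence of flats. One must verify that Busemann cocycle estimates remain uniform on shadows despite the lack of strict convexity, and that the locus of rays asymptotic to properly embedded triangles, where the Busemann conformality rule is not imposed, has $\mu_{x_0}$-measure zero and does not obstruct either bound. A secondary difficulty is converting the annular volume estimate to a pointwise sphere volume estimate in a non-Riemannian, non-$\CAT(0)$ Finsler setting, where one cannot invoke a smooth co-area formula directly and must exploit regularity of Hilbert spheres by hand.
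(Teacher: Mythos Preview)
Your outline shares the correct foundation with the paper---both rest on a Sullivan-type Shadow Lemma for the Patterson--Sullivan density---and the concern you flag about the lower bound in the presence of flats is handled exactly as you would hope: the paper produces, from two noncommuting hyperbolic elements, a pair of fixed open shadows one of which is always contained in $\mathcal O_r(\gam^{-1}x,x)$ (Proposition~\ref{prop:geometricprop}), and full support of $\mu_x$ then gives the uniform lower bound.

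The genuine gap is your final step. You pass from orbit counting to an annular volume estimate $\vol\bigl(B_\Om(x_0,t+R)\smallsetminus B_\Om(x_0,t-R)\bigr)\asymp e^{\delta_\Gam t}$ and then invoke a ``Finsler co-area argument'' to extract $\vol S_\Om(x_0,t)\asymp e^{\delta_\Gam t}$. But knowing $\int_{t-R}^{t+R}\vol S_\Om(x_0,s)\,ds\asymp e^{\delta_\Gam t}$ yields neither the pointwise upper nor the pointwise lower bound without an additional monotonicity or regularity input that you do not supply, and there is no ready-made co-area formula in this non-$C^1$, non-$\CAT(0)$ Finsler setting. The paper sidesteps this entirely by a different decomposition: it places a maximal $r$-separated set $\{x_1,\dots,x_{N_t}\}$ directly on the sphere $S_\Om(x,t)$, applies the local-estimate form of the Shadow Lemma (Lemma~\ref{lem:localestimate}, valid at arbitrary $y\in\Om$, not only orbit points) to each $\mathcal O_r(x,x_i)$, and reads off $N_t\asymp e^{\delta_\Gam t}$ from the fact that the $\mathcal O_r(x,x_i)$ cover $\bdv\Om$ while the $\mathcal O_{r/3}(x,x_i)$ are disjoint. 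The conversion to sphere volume is then a single uniform bound $\vol\bigl(B_\Om(x_i,r)\cap S_\Om(x,t)\bigr)\asymp 1$ coming from cocompactness. Placing the net on the sphere itself, rather than routing through orbit counts and co-area, is precisely the device that closes the argument.
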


A corollary of Theorem \ref{thm:mainthm4} is  that the group $\Gam$ is divergent. 

\subsubsection*{Historical remarks} 

The Hilbert metric on a properly convex domain in real projective space is
named after Hilbert's proposed solution to his fourth problem; these
geometries
are examples of affine metric spaces for which lines are always geodesic.
Though much work has been done for geodesic flows of strictly convex
Hilbert geometries, little is known on the dynamics in the nonstrictly
convex case \cite{Ben1,CrThese, Cr09, Cr14, CrMar14}.  As alluded to in the
introduction, Benoist first proved that for any properly
convex domain $\Om$ in $n$-dimensional real projective space $\R\P^n$ which
is {\em divisible},
meaning $\Om$ admits a discrete, cocompact action by a group $\Gam$ of
projective transformations, the following are equivalent: (i) $\Om$ is
strictly convex, (ii) the topological boundary $\bd\Om$ is $C^1$, (iii)
$\Gam$ is $\delta$-hyperbolic, and (iv) the geodesic flow of $M=\Om/\Gam$
is Anosov \cite[Theorem 1.1]{Ben1}.  Since the geodesic flow is also
topologically transitive (in fact, mixing, \cite[Theorem 1.2]{Ben1}), it
follows that there is a unique measure of maximal entropy in the strictly
convex setting \cite{bowen75, franco}. 

Benoist then constructed examples of nonstrictly convex, divisible Hilbert
geometries in dimension three which have some hyperbolicity but have
isometrically embedded flats \cite[Proposition 4.2]{Ben4}.  These flats
appear as {\em properly embedded triangles} $\tri$ in $\Om$, meaning
$\tri\subset \Om$ and $\bd\tri\subset \bd\Om$, which are isometric to
$\R^2$ with the hexagonal norm in the Hilbert metric \cite{delaHarpe}.
Moreover, he showed that \emph{any} nonstrictly convex, indecomposable,
divisible properly convex set in $\mathbb R \mathbb P^3$ must have the same structure: 

\begin{theorem}[{\cite[Theorem 1.1]{Ben4}}]
  Let $\Gam < \SL(4,\R)$ be a discrete torsion-free subgroup which divides
  an open, properly convex, indecomposable $\Om\subset \RP^3$, and let $M =
  \Om/\Gam$. Let $\mathcal T$ denote the collection of properly embedded
  triangles in $\Om$. Then 
  \begin{enumerate}[(a)]
    \item Every subgroup in $\Gam$ isomorphic to $\Z^2$ stabilizes a unique triangle $\tri \in \mathcal T$. 
    \item If $\tri_1, \tri_2\in\mathcal T$ are distinct, then $\cl{\tri_1}\cap\cl{\tri_2} = \varnothing$. 
    \item For every $\tri\in \mathcal T$, the stabilizer $\Stab_\Gam(\tri)$ contains an index-two
      $\Z^2$ subgroup. 
    \item The group $\Gam$ has only finitely many orbits in $\mathcal T$. 
    \item The image in $M$ of triangles in $\mathcal T$ is a finite collection $\Sigma$ of disjoint
      tori and Klein bottles, denoted by $T$. If one cuts $M$ along each $T\in \Sigma$, each of the
      resulting connected components is atoroidal.  
    \item Every open line segment in $\bd\Om$ is included in the boundary of some $\tri\in\mathcal
      T$. 
    \item If $\Om$ is not strictly convex, then the set of vertices of triangles in $\mathcal T$ is
      dense in $\bd\Om$.  
  \end{enumerate}
  \label{thm:ben3mfldgeom}
\end{theorem}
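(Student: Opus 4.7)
The plan is to exploit the machinery for dynamics of projective transformations on properly convex domains. My starting point would be the classification of commuting semisimple automorphisms of $\Om$ via simultaneous Jordan decomposition in $\SL(4,\R)$, together with the discreteness and cocompactness constraints. Under the cocompact action of $\Gam$ on $\Om$, elements of $\Gam$ that have a nontrivial unipotent part would create orbits in $\Om$ with unusual asymptotic geometry inconsistent with compactness of the quotient, so every element of $\Gam$ (and in particular every member of a $\Z^2$ subgroup) is semisimple over $\R$ up to complexification.

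For parts (a)--(c), given a $\Z^2$ subgroup $A \leq \Gam$, I would simultaneously trigonalize $A$, identify a common $2$-dimensional invariant projective subspace $P\subset \RP^3$, and show that $P\cap \cl{\Om}$ is a $2$-simplex. The key point is that $A$ acts freely and cocompactly on $P\cap\Om$ with an abelian group of rank $2$, so $(P\cap\Om)/A$ is a closed surface of zero Euler characteristic; a two-dimensional divisible properly convex set admitting a rank-two abelian cocompact action must be a triangle, essentially by the classification of divisible convex domains in $\RP^2$ (Benoist) and the fact that an ellipse would give $A$ a nondiscrete action. Uniqueness in (a) comes because any $A$-invariant properly embedded triangle must share its vertex set with $P$. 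For (c), the full stabilizer $\Stab_\Gam(\tri)$ acts on $\tri\cong \R^2$ (with hexagonal norm) as a crystallographic group containing $A$ with finite index; the orientation-preserving translational subgroup gives the index-two $\Z^2$.

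For (b), suppose $\cl{\tri_1}\cap\cl{\tri_2}\neq \emptyset$: a shared boundary point forces the associated rank-two abelian subgroups to commute on a larger flat, producing a $\Z^3$ subgroup of $\Gam$, which is impossible since $\Om/\Gam$ is a $3$-manifold with no $T^3$ factor by indecomposability. For (d), compactness of $M$ together with a volume/packing argument on disjoint immersed flat tori bounds the number of $\Gam$-orbits. Part (e) is then a translation of (a)--(d) into the quotient combined with standard $3$-manifold topology: cutting along the flat tori and Klein bottles gives pieces whose boundary tori are incompressible and which cannot contain further essential tori by (f).

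The hardest steps will be (f) and (g). My plan is to use the north-south dynamics of biproximal loxodromic elements of $\Gam$ (which are abundant in the non-abelian part by a ping-pong argument) to argue that an open line segment $L\subset\bd\Om$ must be fixed (as a set) by the attracting dynamics of some element $g\in \Gam$; then $g$ is contained in the stabilizer of a $2$-plane $P\supset L$, and the eigenstructure of $g$ together with the action of its centralizer forces $P\cap\cl\Om$ to be a triangle $\tri$ with $L\subset \bd\tri$. Part (g) follows from (f) by combining the density of $\Gam$-translates of vertices with the non-strict convexity hypothesis: if $\Om$ is not strictly convex there exists at least one segment in $\bd\Om$, hence by (f) at least one triangle, and the $\Gam$-action spreads its vertex set densely in $\bd\Om$ via the minimality of $\Gam$ on $\bd\Om$ (the limit set being all of $\bd\Om$ by cocompactness). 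The main obstacle is promoting the existence of a single invariant segment to the existence of a full triangle rather than merely a larger segment or a strip, which is precisely where indecomposability of $\Om$ and the low-dimensional accident $n=3$ are used crucially.
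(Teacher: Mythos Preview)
The paper does not prove this theorem at all: it is quoted verbatim from Benoist \cite[Theorem 1.1]{Ben4} and used as a black-box structural input throughout (see the sentence immediately following the statement). There is therefore no ``paper's own proof'' to compare your proposal against.

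That said, since you have sketched an independent argument, a few of your steps have genuine gaps. In your treatment of (a)--(c), the assertion that the abelian subgroup $A\cong\Z^2$ acts \emph{cocompactly} on $P\cap\Om$ does not follow from cocompactness of $\Gam$ on $\Om$; you need an additional argument (e.g.\ a minimal-translation or convex-cocompactness argument on the invariant plane) to pass from an invariant $2$-plane to a properly embedded flat. Your argument for (b) is incorrect as stated: two distinct triangles sharing a boundary point does not force their $\Z^2$ stabilizers to commute, and there is no mechanism producing a $\Z^3$ from a single common point in $\bd\Om$; Benoist's actual proof uses more delicate control on how triangles can accumulate. For (f), the claim that an open segment $L\subset\bd\Om$ is setwise fixed by the attracting dynamics of some $g\in\Gam$ is unjustified --- north--south dynamics typically contract such segments to the attracting fixed point rather than stabilize them --- and this is precisely the hard step in Benoist's paper, requiring an analysis of how the Zariski closure of $\Gam$ interacts with the faces of $\cl\Om$. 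Your outline for (g) via minimality is correct in spirit once (f) is in hand.
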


This structure is essential to make the arguments needed, and we will refer
back to parts of this theorem throughout the paper. Since a version of this
theorem does not yet exist in higher dimensions, our arguments are valid
only in dimension three.  We call compact quotient manifolds of nonstrictly
convex, properly convex, indecomposable domains the {\em Benoist
3-manifolds}.

The existing theory does not apply to studying the geodesic flows of the
Benoist 3-manifolds.  The geodesic flow of $\Om/\Gam$ has the same
regularity of $\bd\Om$, hence by Benoist's dichotomy, in the nonstrictly
convex setting the geodesic flow is not $C^1$. Crampon's Lyapunov exponents
cannot be computed \cite{Cr14}, and Pesin theory, which requires the flow
to be $C^{1+\alpha}$, does not apply. Knieper's work uses the existence of
an inner product and a notion of angle, so his work on Riemannian rank one
manifolds cannot be directly applied \cite{Kn97, Kn98}. The geometry is not
$\CAT(0)$ because the isometrically embedded flats, which are properly
embedded projective triangles, are not $\CAT(0)$ so we cannot use results
from the thesis of Ricks \cite{delaHarpe, ricks}.

Nonetheless, we can adapt the methods of Knieper in rank one following the
Patterson-Sullivan approach \cite{patterson, sullivan79}.  The irregularity
of the geometry and our techniques to manage this comprise a significant
portion of the paper.  The Bowen-Margulis measure comes from the
Patterson-Sullivan density in a natural way, and ergodicity follows a
variation of the Hopf argument \cite{hopf}.
%\ifextended
%Originally, Hopf applied smooth stable and unstable foliations to prove ergodicity of the geodesic
%flow of constant curvature hyperbolic surfaces for Liouville volume \cite{hopf}.  This proof has
%been extended to many settings, the most relevant being  geometrically finite manifolds of pinched
%negative curvature \cite{roblin}, compact rank one manifolds \cite[Theorem 4.3]{Kn98}, and
%geometrically finite rank one manifolds \cite[Section 7]{linkpicaud15}.  
%\fi
In the setting we study, the stable and unstable sets are not even locally
smooth and are not defined for a dense set of directions, but we are still
able to adapt this classical proof. 

\subsubsection*{Structure of the paper}

We first introduce Hilbert geometries and the central tools in Section
\ref{sec:preliminaries}.  In Section \ref{sec:asymptoticgeometry} we
gather lemmas on the 
asymptotic geometry 
%of Benoist 3-manifolds 
and the compatible Busemann function, which apply to arbitrary Hilbert
geometries.  We then construct the Patterson-Sullivan density for the
universal cover of
a Benoist 3-manifold in Section
\ref{sec:pattersonsullivan} and prove the Shadow Lemma (Lemma
\ref{lem:shadowlem}), and in Section \ref{sec:psunique} we prove this
construction is canonical (Theorem \ref{thm:mainthm3}), with application to
growth rates of volumes of spheres and divergence of $\Gam$ (Theorem
\ref{thm:mainthm4}). Lastly, in Section \ref{sec:bowenmargulismeasure}, we
construct the Bowen-Margulis measure and complete the proof of the main
result, Theorem \ref{thm:mainthm1}. 

\subsubsection*{Acknowledgements} 
The author is very grateful to advisor Boris Hasselblatt, to postdoc
mentors Dick Canary and Ralf Spatzier, and to Aaron Brown and Micka\"el
Crampon for helpful conversations, emails, and feedback on the paper. The
author thanks the CIRM for support in the early stages of this work. Many
thanks to the referee for valuable feedback which immensely improved the
paper. The author was supported in part by NSF RTG grant 1045119. 

\section{Preliminaries}
\label{sec:preliminaries}

We say a domain $\Om\subset\R\P^n$ is \emph{properly convex} if $\Omega$
can be represented as a bounded convex set in some affine chart, and denote
by $\partial\Omega$ the topological boundary of $\Omega$ in $\mathbb
R\mathbb P^n$.  Define $H$ to be a \emph{supporting hyperplane} to a
properly convex $\Om\subset \RP^n$ if $H$ is a codimension 1 projective
subspace of $\RP^n$ which intersects $\bd\Om$ but not $\Om$. Then a
properly convex $\Om$ is \emph{strictly convex} if every supporting
hyperplane intersects $\bd\Om$ at a single point. 

For any properly convex domain $\Om$, fix an affine chart in which $\Omega$
is bounded and define the Hilbert $\Om$-distance between $x,y\in\Om$ as
follows: define $d_\Omega(x,x)=0$ and for $x\neq y$, let $\cl{xy}$ denote
the projective line in this affine chart uniquely determined by $x$ and $y$
and take $a,b\in \bd\Om$ to be the distinct intersection points of
$\cl{xy}$ with $\bd\Om$.  Then $d_\Om(x,y)=\frac12 |\log[a:x:y:b] |$ where
$[a:x:y:b]:=\frac{|a-y||x-b|}{|a-x||y-b|}$ is the Euclidean cross-ratio, a
projective invariant.  
It will
be useful to denote by $[xy]$ the segment of $\overline{xy}$ between $x$
and $y$ in $\overline{\Omega}:=\Omega\cup\partial\Omega$, and
$[xy)=[xy]\smallsetminus \{y\}$.  The cross-ratio of four projective lines
$L_1,L_2,L_3,L_4$ intersecting at one point is well-defined as
$[L_1:L_2:L_3:L_4]=[a_1:a_2:a_3:a_4]$ where $a_i\in L_i$ and
$a_1,a_2,a_3,a_4$ are collinear; this can be used to prove that $d_\Om$ is
a well-defined metric.  The group   $\Aut(\Om):=\{g\in \PSL(n+1,\R) \mid
g\Om=\Om\}$ is a subgroup of isometries of $(\Om, d_\Om)$.  The Hilbert
metric comes from a Finsler norm $F_\Omega$ defined on the tangent bundle
$T\Omega$ (see \cite{crampon_handbook}). The norm $F_\Omega$ is only
Riemannian when $\Omega$ is an ellipsoid and has the same regularity as the
boundary of the domain \cite{sociemethou_these, crampon_handbook}.
Projective lines are geodesic and are the only geodesics in the strictly
convex case, but in general geodesics are not always lines.  The metric
space $(\Omega,d_\Omega)$ is complete and the topology induced by the
metric $d_\Omega$ coincides with the ambient Euclidean topology on $\Omega$
in this affine chart. 

We say that $\Om$ in $\mathbb {RP}^n$ is \emph{divisible} if there exists a
discrete subgroup $\Gam$ of $\Aut(\Om)$ acting properly discontinuously and
cocompactly on $\Omega$. Also, $\Omega$ is {\em decomposable} if the cone
over $\Omega$ in $\mathbb R^{n+1}$ is decomposable, and {\em
indecomposable} if $\Omega$ is not decomposable (see \cite[Section
3]{marquis_handbook} for more details).  Let $M=\Om/\Gam$ denote the
quotient manifold. 

Since geodesics are not unique for the Benoist 3-manifolds, we define the
geodesic flow to be flowing along projective lines, as is the case when
$\Om$ is strictly convex.  More formally, let $\Om$ be a divisible properly
convex domain with dividing group $\Gam$ and quotient manifold $M$.  Let
$\ell_v\colon \R\to M$ be the projective line parameterized at unit Hilbert
speed, uniquely determined by $v\in T^1M$, the unit tangent bundle to $M$
for the Finsler norm $F_\Omega$.  The Finsler unit tangent bundle to
$\Omega$ is denoted $T^1\Omega$.  Then the Hilbert geodesic flow of
$\Omega$ is $\tilde\phi^t\colon T^1\Omega\to T^1\Omega$ defined by
$\phi^t(v)=\dot{\ell}_v(t)$, and this flow descends to the geodesic flow
$\phi^t$ on $T^1M$, the unit tangent bundle to the quotient.  Note that
this geodesic flow has the same regularity as the boundary of the universal
cover $\Omega$ (for more details, see \cite[Section
2.4]{crampon_handbook}).

Formally, a geodesic $\gamma$ for the Hilbert metric is a path in $\Omega$
or $M$ such that the length of any segment of $\gamma$ is equal to distance
between the endpoints.  On occasion we will parameterize $\gamma$ at unit
Hilbert speed, and treat $\gamma$ as a mapping from $\mathbb R$ to $\Omega$
or $M$ to take advantage of the parameterization.

\subsection{Busemann functions}

For any three points $x,y,z\in\Om$, we define the \emph{Busemann function} to be
\[
  \beta_z(x,y) = d_\Om(x,z)-d_\Om(y,z).
\]

Evidently, for all $z\in\Omega$, the function $\beta_z$ is anti-symmetric, meaning
$\beta_z(x,y)=-\beta_z(y,z)$, and satisfies the property of a cocycle, that is
$\beta_z(x,y)+\beta_z(y,w) = \beta_z(x,w)$, for all $x,y,w\in\Omega$.  Also $|\beta_z(x,y)|\leq
d_\Omega(x,y)$ by the triangle inequality. Lastly, since $\Gam$ is acting on $\Om$ by isometries,
$\beta_{\gam z}(\gam x,\gam y)=\beta_{z}(x,y)$ for all $\gam\in\Gam$.  Geometrically, $\beta_z(x,y)$
describes the signed distance between the Hilbert spheres centered at $z$ passing through $x$ and
$y$.

\begin{definition}
  \label{def:busemann}	      
  We wish to extend the Busemann functions to the boundary. 
%  For $z\in\Omega$ and $\xi\in\partial\Omega$, let $c_{z,\xi}\colon \mathbb
%  [0,\infty)\to\Omega$ be the projective ray parameterized at unit Hilbert
%    speed such that $c_{z,\xi}(0)=z$ and
%    $\lim_{t\to\infty}c_{z,\xi}(t)=\xi$. 
  Define 
%  \begin{align*} 
%    &\beta^-_\xi(x,y) = \inf_{z\in\Omega}
%    \liminf_{t\to\infty}\beta_{c_{z,\xi}(t)}(x,y),\\
%    &\beta^+_\xi(x,y) = \sup_{z\in\Omega}
%    \limsup_{t\to\infty}\beta_{c_{z,\xi}(t)}(x,y).
%  \end{align*}
  \begin{align*} 
    &\beta^-_\xi(x,y) = \inf_{z_n\to\xi}
    \liminf_{n\to\infty}\beta_{z_n}(x,y),\\
    &\beta^+_\xi(x,y) = \sup_{z_n\to\xi} \limsup_{n\to\infty}\beta_{z_n}(x,y). 
  \end{align*}
\end{definition}

These functions exist and are bounded in absolute value by $d_\Om(x,y)$ for
all $x,y\in\Omega$ and $\xi\in\partial\Omega$.  It is straightforward to
verify for all $x,y,w\in \Omega$ and $\xi\in\partial\Omega$ that
$\beta_\xi^-(x,y)=-\beta_\xi^+(y,x)$, and 
\[
  \beta_\xi^-(x,y)+\beta_\xi^-(y,w)\leq \beta_\xi^-(x,w)\leq \beta_\xi^+(x,w)\leq
  \beta_\xi^+(x,y)+\beta_\xi^+(y,w).
\]
Since $\Gamma$ acts by isometries, 
%and takes projective rays to projective rays (see that $\gamma c_{z,\xi}(t)=c_{\gamma z, \gamma \xi}(t)$ for all
%$z\in\Omega$, $\xi\in\partial \Omega$ and $t\geq0$), 
we also have
$\beta_{\gamma\xi}^\pm(\gamma x,\gamma y)=\beta_\xi^\pm(x,y)$. 
Then if indeed $\beta^+_\xi=\beta^-_\xi$, we may define
$\beta_\xi=\beta_\xi^+$, and see that the anti-symmetric, cocycle, and
$\Gamma$-invariance properties of $\beta_z$ for $z\in\Omega$ extend to
$\beta_\xi$ for $\xi\in\partial \Omega$.  For such $\xi\in\partial\Omega$,
the \emph{horosphere} through $x\in\Om$ based at $\xi $ is the zero set of
$\beta_\xi(x,\cdot)$, denoted by $\mathcal H_\xi(x)$. 

\subsection{Boundary points}

Recall that a supporting hyperplane to $\Omega$ at a point $\xi$ in
$\partial\Omega$ is a projective hyperplane $H$ such that $H$ contains
$\xi$ and $H\cap \Omega=\varnothing$.  Borrowing language from convex
geometry, we introduce the following terms: 
\begin{definition}
  \label{def:smoothextremal}
  A point $\xi$ in $\partial \Omega$ is {\em smooth} if there is a unique
  supporting hyperplane to $\Omega$ at $\xi$.  The point $\xi$ in
  $\partial\Omega$ is {\em extremal} if $\xi$ is not contained in any open
  line segment inside $\partial \Omega$.  
\end{definition}

Note that smooth points in $\partial \Omega$ may not be $C^1$ points when
$\partial\Omega$ is treated as a curve in an affine chart. For the examples
of interest to this work, there will be a dense set of points in the
boundary for which the derivative is not defined.  By Benoist, the
complement of boundaries of properly embedded triangles in $\bd\Om$ is
exactly the set of {smooth extremal} points: 

\begin{proposition}[{\cite[Proposition 3.8]{Ben4}}]
  Let $\Gamma$ be a discrete, torsion-free subgroup of $\PSL(4,\mathbb R)$ which divides an
  indecomposable, divisible, properly convex domain in $\mathbb R\mathbb P^3$. Then 
  \begin{enumerate}[a)]
    \item For every nontrivial line segment $\sigma\subset\partial\Omega$, there exists a properly
      embedded triangle $\triangle$ such that $\sigma\subset \partial \triangle$.
    \item A point $x\in \partial \Omega$ is smooth if and only if $x$ is not the vertex of any
      properly embedded triangle in $\Omega$.
  \end{enumerate}
  \label{prop:benoist_on_bndy_pts}
\end{proposition}

It follows from Theorem \ref{thm:ben3mfldgeom} that smooth extremal points
are dense in $\partial \Omega$ in the setting of interest.  We will see
that these smooth extremal points carry the hyperbolic behavior of the
dynamics, and the Busemann functions will be well-defined for these points.

\begin{remark}
  We point out a particular feature that is special for the Benoist
  3-manifolds, and essential for our study. By Benzecri's thesis work,
  \cite{benzecri60}, there are no angular points in the boundary of the
  universal cover of a Benoist 3-manifold.  Benoist extracts the
  consequences of this result in Proposition \ref{prop:benoist_on_bndy_pts}
  and Theorem \ref{thm:ben3mfldgeom} part (b). If $\Omega/\Gamma$ is a
  Benoist 3-manifold, then every point in the boundary of $\Omega$ is
  either smooth or extremal;
%  Many boundary points
%  are both smooth and extremal. But Benoist's work implies that 
  the only
  exceptions to smoothness are vertices of properly embedded triangles, and
  the only exceptions to extremality are points in the open edges of
  properly embedded triangles, and these cannot coincide (distinct
  properly embedded triangles have disjoint closures). 
  \label{rem:nonsmoothext}
\end{remark}

\subsection{Busemann densities}

We introduce here a nonstandard definition of 
%conformal 
%quasi-conformal
Busemann 
densities to 
%simultaneously take advantage of the ambient geometric boundary, and also 
address issues with nonsmooth points in the boundary, where the
Busemann functions are not well-defined. 

%Note that in
%settings such as $\CAT(0)$ metric spaces, these densities are defined and
%supported on the boundary space of equivalence classes of geodesic rays,
%which agrees with the Busemann boundary of equivalence class of
%accumulation points of Busemann functions \cite{bridson-haeflinger}. The
%lack of regularity here would make this approach more technical and so we
%choose to weaken the definition of Busemann density and see that the
%ideas of Patterson and Sullivan still carry through here \cite{patterson,
%sullivan79}. 

\begin{definition} \label{def:busemanndensity}
%  An $\alpha$-\emph{dimensional Busemann density} or 
%  \emph{Busemann
%  density} 
%  A {\em quasi-conformal Busemann density}
  A {\em 
%  conformal 
  Busemann density}
  of dimension $\alpha>0$ for $\Om$ is a family of finite Borel
  measures $\{\mu_x\}_{x\in\Omega}$ supported on $\partial\Omega$ which
  satisfy:
  \begin{itemize}
    \item (\emph{quasi-$\Gam$-invariance})  for all $\gam\in\Gam$,
      $\gam_\ast \mu_x = \mu_{\gam x}$,
      and
    \item (\emph{transformation rule}) 
%      there exists a constant $k>1$ such that 
      for all $x,y\in\Omega$ and $\xi\in \supp\mu_y,$ the measures
      $\mu_x$ and $\mu_y$ are absolutely continuous, and in particular their
      Radon--Nikodym derivative satisfies
      \[
%	\frac1k e^{-\alpha\beta_\xi^+(x,y)}\leq\frac{d\mu_{x}}{d\mu_y}(\xi)
%	\leq ke^{-\alp\beta^-_\xi(x,y)}.
	e^{-\alpha\beta_\xi^+(x,y)}\leq\frac{d\mu_{x}}{d\mu_y}(\xi)
	\leq e^{-\alp\beta^-_\xi(x,y)}.
      \]
  \end{itemize}		
%  If the constant $k$ in the transformation rule can be chosen
%  to be equal to 1, then the density is called a {\em conformal Busemann
%  density}.
\end{definition}

Note that if the Busemann function was well-defined for every point in
$\partial \Omega$, we would recover the standard definition of a
%(quasi-)
Busemann 
(conformal) density. %of dimension $\alpha$. 
To prove Theorem \ref{thm:mainthm3}, we will construct a %quasi-conformal
Busemann density for which almost every point is smooth and extremal, and
consequently the Busemann functions will be defined almost everywhere 
and the density will be conformal in the usual sense.
%and in fact the density will be conformal. 

\subsection{Shadow topology}
At times we will take advantage of the ambient Euclidean topology on
$\Omega$, represented as a bounded convex domain in an affine chart, and
the induced topology on $\partial \Omega$ and
$\overline{\Omega}=\Omega\cup\partial\Omega$.  We define another topology
on $\partial\Omega$ which interacts with the Hilbert geometry inside
$\Omega$ as follows. 

\begin{definition} \label{def:shadowtopology}
  Let $B_\Om(x,r)$ be the open metric $d_\Om$-ball about $x\in\Omega$ of
  radius $r$. Then the {\em shadow of radius $r$ from $x$ to $y$} is
  denoted by $\mathcal O_r(x,y)$, and is equal to the endpoints of
  projective rays based at $x$ which pass through the open metric ball 
  $B_\Omega(y,r)$.  These
  shadows generate a possibly basepoint-dependent topology on
  $\partial\Omega$ called the {\em shadow topology based at} $x$. More
  precisely, the shadow topology based at $x$ is the topology generated by
  the set 
  \[
    \{\mathcal O_r(x,y):y\in\Omega, r>0\}. 
  \]
\end{definition}

It is straightforward to confirm that this topology agrees with the
ambient Euclidean topology, and is therefore basepoint independent. 
If the properly convex domain $\Omega$ was
strictly convex with $C^1$ boundary, then a basis for this topology is the
set $\{\mathcal O_r(x,y):y\in\Omega\}$ for any fixed $r>0$. We cannot
conclude as much in the nonstrictly convex setting, but we will see that
this topology is still well-behaved near the smooth extremal points, which
will suffice for the development of the Patterson-Sullivan theory. 

%\color{red} 
%new version of shadow topology.
%but here this remark is actually obvious so there is nothing to say
%\color{black}
%
%We will see in Proposition \ref{prop:shadows_generate_topology} that the
%shadow topology does not depend on the basepoint, because it agrees with
%the Euclidean topology on $\partial\Omega$ in an affine chart in which
%$\Omega$ is bounded. 

\subsection{Regular vectors}

For any vector $v\in T^1\Omega$, there is a unique oriented projective line $\ell_v$ determined by
$v$, and we let $v^-$ and $v^+$ denote the intersections of $\ell_v$ in $\partial \Omega$ in
backward and forward time, respectively. 

\begin{definition}
  A vector $v\in T^1\Omega$ is {\em regular} if both $v^-$ and $v^+$ are smooth extremal points.
  The set of {regular vectors} in $T^1\Om$ is denoted by $T^1\Omega_{\reg}$. Regularity is preserved
  by projective transformations, so a vector in $T^1M$ is regular if any lift in $T^1\Omega$ is
  regular, and $T^1M_{\reg}$ is the set of all regular vectors in $T^1M$. 
  \label{def:regularvectors}
\end{definition}

\subsection{Standing assumptions} 

In Section \ref{sec:asymptoticgeometry},
%unless otherwise indicated, we assume
we take $\Omega$ to be an arbitrary properly convex set in real projective
space in unspecified dimension. In the remaining Sections
\ref{sec:pattersonsullivan}, \ref{sec:psunique}, and
\ref{sec:bowenmargulismeasure}, we assume $\Omega$ is a 
nonstrictly convex, properly convex, divisible,
indecomposable domain in real projective space of dimension 3, 
with discrete torsion-free
dividing group $\Gamma$,  so that $M=\Om/\Gam$ is a Benoist 3-manifold.  

Throughout the paper, we fix an affine chart in which $\Omega$
is bounded, and work with $\Omega$ in this affine chart. 

\section{Asymptotic geometry}
\label{sec:asymptoticgeometry}

%{\color{red} this section totally does not depend on the geometry of the
%Benoist 3-manifolds!!! make this clear and point it out to the referee!!!}
%
In this section, we will prove some straightforward lemmas on the shadow
topology and Busemann functions for a Hilbert geometry in any dimension.
These results are likely well-understood by experts; the proofs are included for
completeness. 

\subsection{The shadow topology}

The following lemma confirms that for an extremal point $\xi$ in
$\partial \Omega$, shadows of a fixed radius generate the local topology at
$\xi$. 
%for any fixed $R>0$, $x\in \Omega$, and
%smooth extremal
%$\xi\in\partial\Omega$, for every open set $U$ containing $\xi$, there
%exists a point $y\in\Omega$ such that $\xi\in\mathcal O_R(x,y)\subset U$.
This fact requires that $\xi$ is both smooth and extremal. 

\begin{lemma}
  Let $y_n\in\Omega$ be a sequence of points converging along a projective
  line to $\xi\in\partial \Omega$. Then $\xi$ is an extremal point in
  $\partial\Omega$ if and only if for all $r<0,x\in\Omega$, 
%  there exists a $k\in \mathbb R$ such that 
  \[
    \bigcap_{n\in\mathbb N} \mathcal O_r(x,y_n) =\{\xi\}.
  \]
  \label{lem:shadows_converge_to_smoothext}
\end{lemma}

\begin{proof}
  Let $\eta$ be a point in $\partial\Omega$ distinct from $\xi$, and let
  $y=y_0$. For each $n$, let $x_n$ be a closest point to $y_n$ on the
  projective line from $x$ to $\eta$ in the Hilbert metric. Let $a_n$,
  $b_n$ be such that $d_\Omega(x_n,y_n)=\frac12\log [a_n:x_n:y_n:b_n]$. 
%  Since
%    $\xi$ is smooth and extremal, the open line segment $(\eta\xi)$ is
%  contained in $\Omega$. Then 
  Since $\xi$ is extremal, $y_n$ converging to $\xi$ implies the same for
  $b_n$, hence the Hilbert distance between $x_n$ and $y_n$ goes to
  infinity as $n$ grows. Hence for large $n$, the projective ray $(x\eta)$
  does not intersect the ball $B_\Omega(x,y_n)$, and $\eta$ is not in
  $\mathcal O_r(x,y_n)$. 
  
  Conversely, see that if $\xi$ is not extremal, then there is an open line
  segment contained in the shadow $\mathcal O_r(x,y_n)$ for all $n$. 
\end{proof}

\subsection{The Busemann function and horospheres}
In this subsection, we verify some regularity properties of the Busemann
function.  

\begin{lemma}
  The Busemann function $\beta_\xi$ is well-defined on smooth points $\xi$
  in $\partial\Omega$, and $\beta_\xi(x,y)$ varies continuously over the
  inputs $x$ and $y$ in $\Omega$. 
%  , meaning $\beta_\xi=\beta_\xi^-=\beta_\xi^+$ if
%  $\xi\in\partial\Omega$ is smooth.  

  More specifically, we have the following geometric description of the
  Busemann function: for any $x,y\in\Omega$, and any smooth boundary point
  $\xi\in\partial\Omega$, let $H_\xi$ be the supporting hyperplane to
  $\Omega$ at
  $\xi$, and let $x^-,y^-$ be the intersection points of the lines 
  $\overline{x\xi},\overline{y\xi}$ respectively with $\partial \Omega$
  which are not $\xi$. If $x^-\neq y^-$, 
  let $q(x,y,\xi)$ be the unique intersection point of the line
  $\overline{x^-y^-}$ 
%  any line containing the segment $[x^-y^-]$ 
  with the hyperplane $H_\xi$ in
  projective space. Then if $x,y$ and $\xi$ are not collinear, 
  \[
    \beta_\xi^-(x,y)=\beta_\xi^+(x,y)=\frac12\log
%    [\overline{x^-y^-}:\overline{xq}:\overline{yq}:\overline{\xi q}]
    [\overline{x^-q}:\overline{xq}:\overline{yq}:\overline{\xi q}]
  \]
  and otherwise, 
  \[
    \beta_\xi^-(x,y)=\beta^+(x,y)=\frac12\log[x^-:x:y:\xi].
  \]
%  \color{red} if $x^-=y^-$ then the statement above needs to change; 
%  $\overline{x^-y^-}$ is replaced with a supporting hyperplane, and the two
%  supporting hyperplanes intersect in a codimension 2 subspace, then
%  chooose any $q$ in this intersection and any lines and the statement
%  makes sense. but it might be simpler to just write the cross-ratio which
%  is $\pm$ the distance btwn $x$ and $y$
%  \color{black}.
%  (See Figure \ref{fig:busemann}).
  \label{lem:busemann}
\end{lemma}

\begin{figure}[t]
  \centering
  \begin{tikzpicture}[scale=1] 
    \def\sep{1.2}

    \draw (0,-.2) coordinate (a) (2,-.5) coordinate (b) (1,1.25) coordinate
    (c); 
    \draw (a) .. controls ++ (.2,-.6) and ++(-.4,-.6) .. (b)
    coordinate[pos=.65] (xi);
    \draw (b) .. controls ++ (.5,.6) and ++(1.2,-.3) .. (c)
    coordinate[midway] (y-);
    \draw (a) .. controls ++ (-.1,.4) and ++ (-1.2,.1) .. (c)
    coordinate[pos=.95] (x-);

    \draw (3.5,1.4) node {$\Omega\cap P$};
    \draw (xi) node[circle, inner sep=\sep, fill=black,
    label={below:{$\xi$}}] {};
    \draw (y-) node[circle, inner sep=\sep, fill=black, label={above
    right:{$y^-$}}] {};
    \draw (x-) node[circle, inner sep=\sep, fill=black,
    label={above:{$x-$}}] {};

%%% IF YOU ADJUST AND HAVE ERRORS it might be because of finding q
    \draw[name path=findq1] (xi)++(4,0) node[below] {$ L_\xi$} 
    -- (xi) ;
    \draw[name path=findq2, draw=none] [add=0 and 2.85] (x-) to (y-);
%		\draw[name path=findq2, fill=none] [add=0 and 2.85] (x-)
%		to (y-) coordinate (q);
    \path [name intersections={of=findq1 and findq2,by=q}];
    \draw (q) node[circle, inner sep=\sep, fill=black, label={below:{$q$}}]
    {};
    \draw (x-)--(q);

    \draw[name path=XI] (x-)-- 
    coordinate[pos=.3] (xbar) 
    coordinate[pos=.6] (x) 
%    coordinate[very near end] (xbar) 
    (xi);
    \draw[name path=YXI] (xi) -- 
%    coordinate[very near start] (y)  
    (y-);
    \draw (x) node[circle, inner sep=\sep, fill=black, label={left:{$x$}}]
    {};
    \draw (xbar) node[circle, inner sep=\sep, fill=black,
    label={left:{$\bar{x}$}}] {};

%		\node[xshift=-1em] at (xi) {$\xi$};
%		\node[xshift=1em, yshift=.2em] at (y) {$\eta$};

    \draw[name path=CR1] (x) --(q);
    \draw[name path=CR2] (xbar)--(q);
    \path [name intersections={of=YXI and CR2,by=y}];
    \node [circle, fill, inner sep=\sep,label=94:$y$] at (y) {};

  \end{tikzpicture}
  \begin{tikzpicture}[scale=1] 
    \def\sep{1.2}

    \draw (3.5,1.4) node {$\Omega\cap P_n$};

    \draw (0,-.2) coordinate (a) 
    (2,-.5) coordinate (b)
    (1,1.25) coordinate (c); 
    \draw (a) .. controls ++ (.2,-.6) and ++(-.4,-.6) .. (b) 
    coordinate[pos=.8] (xn+)
    coordinate[pos=.4] (yn+);
    \draw (b) .. controls ++ (.5,.6) and ++(1.2,-.3) .. (c)
    coordinate[midway] (yn-);
    \draw (a) .. controls ++ (-.1,.4) and ++ (-1.2,.1) .. (c)
    coordinate[pos=.95] (xn-);

    \draw (xn+) node[circle, inner sep=\sep, fill=black,
    label={below right:{$x_n^+$}}] {};
    \draw (yn-) node[circle, inner sep=\sep, fill=black, label={above
    right:{$y_n^-$}}] {};
    \draw (yn+) node[circle, inner sep=\sep, fill=black, label={below
    left:{$y_n^+$}}] {};
    \draw (xn-) node[circle, inner sep=\sep, fill=black,
    label={above:{$x_n^-$}}] {};

%%% IF YOU ADJUST AND HAVE ERRORS it might be because of finding q
    \draw[thick, name path=findq1, add= 0 and 3.3] (yn+) to (xn+) ;
%		\draw[name path=findq1] (xn+)++(4,0) node[below] {$ H_\xi$} -- (xn+) ;
    \draw[name path=findq2, draw=none] [add=0 and 2.85] (xn-) to (yn-);
%		\draw[name path=findq2, fill=none] [add=0 and 2.85] (xn-) to (yn-) coordinate (qn);
    \path [name intersections={of=findq1 and findq2,by=qn}];
    \draw (qn) node[circle, inner sep=\sep, fill=black, label={below:{$q_n$}}] {};
    \draw[] (xn-)--(qn);

    \draw[name path=XZN] (xn-)-- 
    coordinate[pos=.35] (xn) 
    coordinate[pos=.5] (x)
%    coordinate[very near start]
%    (xn+) 
    (xn+);
    \draw[name path=YZN] (yn+) --  (yn-);
    \path [name intersections={of=XZN and YZN,by=zn}];
    \draw (xn) node[circle, inner sep=\sep, fill=black,
    label={left:{$x_n$}}] {};
    \draw (zn) node[circle, inner sep=\sep, fill=black,
    label={left:{$z_n$}}] {};
    \draw (x) node[circle, inner sep=\sep, fill=black,
    label={left:{$x$}}] {};

%		\node[xshift=-1em] at (xn+) {$\xi$};
%		\node[xshift=1em, yshift=.2em] at (zn) {$z_n$};

    \draw[name path=CR1] (xn) --(qn);
%		\draw[name path=CR2] (zn)--(qn);
    \path [name intersections={of=YZN and CR1,by=y}];
    \node [circle, fill, inner sep=\sep,label=93:$y$] at (y) {};
    \draw (zn) -- (qn);
%		\path [name intersections={of=ETA and CR2,by=y}];
%		\node [circle, fill, inner sep=\sep,label=94:$y$] at (y) {};

  \end{tikzpicture}

  \caption{For the proof of Lemma \ref{lem:busemann}.
%  \color{red} the figure is updated. the caption now 
%  needs to be updated for the proof.
%  \color{black}
  In the left panel, we take the 2-dimensional intersection of $\Om$ with
  the projective plane $P$ determined by
  $x,\xi,$ and $y$.  In the right panel, we take a sequence 2-dimensional
  intersections of 
  $\Om$ with the projective plane $P_n$ 
  determined by the projective lines $\overline{x z_n}$ and $\cl{y z_n}$,
  and see that
  $\beta_{z_n}(x,y)=\beta_{z_n}(x,x_n)=\frac12\log[x_n^-:x:x_n:x_n^+]$ 
  where $x_n$ is as pictured.  In
  Lemma \ref{lem:busemann} we confirm that if $\xi$ is smooth, then the
  image on the right converges to the image on the left, and 
  $\beta_\xi(x,y)=\frac12\log[x^-:x:\bar{x}:\xi]$
  as pictured in the left panel.}
  \label{fig:busemann}
\end{figure}

\begin{proof}
%  {\color{red} fill in the proof}
%  If $x,y$, and $\xi$ are collinear, \color{red}(deal with this
%  separately)\color{black}
%
  Suppose first that $x,y,$ and $\xi$ are not collinear, so there is a unique
  projective plane $P$ containing $x,y,$ and $\xi$. 
%  As before, let $x^-$
%  and $y^-$ be the intersection points of the lines $\overline{x\xi}$ and
%  $\overline{y\xi}$ with the boundary $\partial\Omega\cap P$ other than
%  $\xi$. 
  Since $\xi$ is smooth in $\partial\Omega$, then $\xi$ is also
  smooth in $\partial\Omega\cap P$, and there is a unique supporting
  hyperline $L_\xi$ to $\partial\Omega\cap P$ at $\xi$ 
  For each $n$ sufficiently large,
  let $P_n$ be the projective plane containing the three points $x,y$ and
  $z_n$. As $n$ goes to infinity, $P_n$ converges to $P$ in the
  Gromov-Hausdorff sense. 

%  In each plane $P_n$, define the point $x_n$ to be the unique point of 
%  intersection between the line $\overline{xz_n}$ and the sphere centered
%  at  $z_n$ of radius $d_\Omega(z_n,y)$; 
%  thus %$d_\Omega(x_n,z_n)=d_\Omega(y,z_n)$ and 
%%  Then 
  %$\beta_{z_n}(x,y) = \beta_{z_n}(x,x_n)$. 
%  d_\Omega(x,z_n)-d_\Omega(x_n,z_n)= 
%  \pm d_\Omega(x,x_n)$, depending on whether $x$ is closer to $z_n$ than
%  $x_n$. 
  %$d_\Omega(x,z_n)>d_\Omega(x_n,z_n)$. 
%  Then since the Hilbert metric is continuous, it suffices to show that as
%  $n$ goes to infinity, $x_n$ converges to the point $\bar{x}$ on the line
%  $\overline{x\xi}$ such that $\beta_\xi(x,y)=\beta_\xi(x,\bar{x})=\pm
%  d_\Omega(x,\bar{x})$, depending on whether $x$ is closer to $\xi$ than
%  $\bar{x}$. This argument is now a cross-ratio argument. 
  In the projective
  plane $P_n$, let $y_n^+$ and
  $y_n^-$ be the intersection points of the projective line
  $\overline{yz_n}$ with the boundary $\partial\Omega\cap P_n$, such that
  $y_n^-$ is closer to $y$ than $z_n$ in the affine metric, and choose
  $x_n^-,x_n^+$ similarly with respect to $x$. Then in the projective
  subspace $P_n$, the lines $\overline{x_n^-y_n^-}$ intersect at some point
  $q_n$.  A picture detailing this set-up is available in Figure
  \ref{fig:busemann}.

  A quick calculation confirms the cross-ratio has the property that
  $[a:x_1:x_2:b][a:x_2:x_3:b]=[a:x_1:x_3:b]$. 
  Then since the cross-ratio of four lines is well-defined and
  $\overline{q_nx_n^\pm}=\overline{q_ny_n^\pm}=\overline{x_n^\pm y_n^\pm}$, 
  \begin{multline*}
    \beta_{z_n}(x,y) = 
    \frac12\log
    [\overline{x_n^-y_n^-}  :  \overline{x q_n} : \overline{z_nq_n} :
    \overline{x_n^+y_n^+}]
    -
    \frac12\log
    [\overline{x_n^-y_n^-} : \overline{y q_n} : \overline{z_n q_n} :
    \overline{x_n^+y_n^+}]  \\
    = \frac12\log 
    [\overline{x_n^-y_n^-}  :  \overline{x q_n} : \overline{y q_n} :
    \overline{x_n^+y_n^+}]
  \end{multline*}
  
%  See that the distance 
%  $d_\Omega(z_n,x_n)=d_\Omega(z_n,y)$ is equal to half the logarithm of 
%  the cross ratio of the four lines $\overline{x_n^- y_n^-}$,
%  $\overline{q_n y}=\overline{q_n x_n}$, $\overline{q_nz_n}$, and
%  $\overline{x_n^+y_n^+}$. 
%  
  Now, as $z_n$ converges to $\xi$, the points $x_n^-$ and $y_n^-$ converge
  to $x^-$ and $y^-$ respectively,
  hence the lines
  $\overline{x_n^-y_n^-}$ converge to the line $\overline{x^-y^-}$. Both
  $y_n^+$ and $x_n^+$ converge to the smooth point $\xi$ and are contained
  in the plane $P_n$ which converges to the plane $P$, 
  so the lines
  $\overline{y_n^+x_n^+}$ must converge to the unique supporting projective
  line $L_\xi$ to $\xi$ in $\partial\Omega\cap P$. It follows that the
  sequence of points $q_n$ in $P_n$, which is the unique intersection
  point of the lines $\overline{x_n^-y_n^-}$ and $\overline{x_n^+y_n^+}$,
  converges to the intersection point $q$ of the lines $\overline{x^-y^-}$
  and $L_\xi=\overline{\xi q}$. 
%  Lastly, it is clear that $\overline{xz_n}$ converges to
%  $\overline{x\xi}$ as $z_n$ converges to $\xi$, so the intersection point
%  $x_n$ of the lines $\overline{x z_n}$ and $\overline{q_n y}$ converges to 
%  $\bar{x}$, which is the intersection of the lines $\overline{x\xi}$ and
%  $\overline{qy}$. 
  The conclusion follows, in this case where
  $x,y$ and $\xi$ are not collinear. 

  If $x,y,$ and $\xi$ are collinear, then let $w$ be any other point in
  $\Omega$ which is not collinear. Then a short calculation confirms that 
  $\beta_{z_n}(x,y)=\beta_{z_n}(x,w)+\beta_{z_n}(y,w)$ converges to
  $\beta_\xi(x,w)+\beta(w,y)=\frac12\log[x^-:x:y:\xi]$ as desired. Finally,
  it is now clear to see geometrically that the Busemann functions at fixed
  $\xi$ vary continuously in the inputs $x,y\in\Omega$. 
%  \color{red} this is wrong actually, be more careful about all of it
%
%  On the other hand, if $x,y,\xi$ are collinear, then $[x^-y^-]$ is a
%  single point. Should all elements of the sequence
%  $z_n$ also be collinear with $x,y,\xi$, then the conclusion is already
%  immediate. If
%  $z_n$ is not collinear with $x$ and $y$, then the planes $P_n$ as above
%  can be constructed, as can the lines $\overline{x_n^-y_n^-}$ and
%  $\overline{x_n^+y_n^+}$, and the argument carries through as before,
%  except the lines $\overline{x_n^-y_n^-}$ will accumulate on a line
%  contained in some supporting hyperplane to $\Omega$ at $x^-=y^-$.
%  \color{black}
\end{proof}

Thus, we have:

\begin{corollary}
  When $\xi$ is smooth, the Busemann functions $\beta_\xi$ satisfy the
  anti-symmetric, cocycle, and $\Gamma$-invariance properties, as discussed in
  Definition \ref{def:busemann}. 
  \label{cor:smoothbusemann}
\end{corollary}

\begin{lemma}
  The Busemann functions restricted to smooth points in $\partial\Omega$
  are continuous.
  \label{lem:continuitybusemann}
\end{lemma}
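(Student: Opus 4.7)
The plan is to combine the explicit cross-ratio description of the Busemann function from Lemma \ref{lem:busemann} with the description of the visual boundary topology from Theorem \ref{thm:topologybasepoints}, reducing continuity to continuity of a few projective incidences on the proper extremal locus.

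By Theorem \ref{thm:topologybasepoints}, convergence $\xi_n \to \xi$ in $\bdv\Om$ among proper extremal points is equivalent to convergence of their canonical projective endpoints $\xi_n^+ \to \xi^+$ in $\bd\Om$, since $\sim_\sigma$-classes of proper extremal points are singletons. With $x, y \in \Om$ fixed, the task reduces to showing that $\xi^+ \mapsto \beta_\xi(x, y)$ is continuous on the proper extremal points of $\bd\Om$. For this, I would appeal to the cross-ratio formula established in the proof of Lemma \ref{lem:busemann} (cf.\ Figure \ref{fig:busemann}),
\[
\beta_\xi(x, y) = \tfrac{1}{2}\log[\xi^-:x:\bar{y}:\xi^+],
\]
where $\xi^-$ is the second endpoint of $\cl{x\xi^+}\cap\bd\Om$, $\eta^-$ is the second endpoint of $\cl{y\xi^+}\cap\bd\Om$, $H_{\xi^+}$ is the unique supporting hyperplane at the proper point $\xi^+$, $q = \cl{\xi^-\eta^-}\cap H_{\xi^+}$, and $\bar{y} = \cl{yq}\cap \cl{x\xi^+}$.

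Each ingredient in this chain is a projective incidence continuous in its inputs away from degenerate configurations, and the cross-ratio of four distinct collinear points is continuous. The endpoints $\xi^-$ and $\eta^-$ vary continuously in $\xi^+$ by continuity of $\bd\Om$ as a topological sphere. Non-degeneracy of the intersection $q$ is ensured because $H_{\xi^+}$ cannot contain $\xi^-$ or $\eta^-$: either inclusion would place the line $\cl{x\xi^+}$ or $\cl{y\xi^+}$ entirely inside $H_{\xi^+}$, contradicting $H_{\xi^+}\cap\Om = \varnothing$. Hence $\bar{y}$ also depends continuously on $\xi^+$ provided the supporting hyperplane assignment does.

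The main step, and the only non-trivial geometric input, is continuity of $\xi^+ \mapsto H_{\xi^+}$ on the proper locus. I would argue this by compactness of the Grassmannian of hyperplanes in $\RP^3$: if $\xi_n^+ \to \xi^+$ with all points proper, any subsequential limit $H'$ of $H_{\xi_n^+}$ is again a codimension-one projective subspace containing $\xi^+$ and disjoint from the open set $\Om$, hence a supporting hyperplane at $\xi^+$; properness of $\xi^+$ then forces $H' = H_{\xi^+}$, so the full sequence converges. With this in hand, the cross-ratio formula yields $\beta_{\xi_n}(x, y) \to \beta_\xi(x, y)$, finishing the argument.
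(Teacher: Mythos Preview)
Your proposal is correct and rests on the same geometric fact as the paper's proof: uniqueness of the supporting hyperplane at a proper point forces the assignment $\xi^+ \mapsto H_{\xi^+}$ to be continuous. The organization differs somewhat. The paper normalizes via the cocycle property so that $\beta_\xi(x,y)=0$, then argues by contrapositive: if $\beta_{\xi_n}(x,y)\not\to 0$, a subsequential limit of the auxiliary points $q_n$ yields a supporting hyperline at $\xi$ linearly independent from $H_\xi$, contradicting properness. You instead work directly with the cross-ratio formula from Lemma~\ref{lem:busemann} and check continuity of each constituent, isolating the continuity of $H_{\xi^+}$ as a clean sublemma proved via compactness of the Grassmannian. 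Your route is slightly more modular and makes the role of properness more transparent; the paper's contrapositive packaging is a bit more economical in that it avoids separately tracking all the incidences. One small point you might make explicit: the construction of $q$ and $\bar y$ lives in the two-dimensional slice of $\Om$ through $x,y,\xi^+$ (as in Figure~\ref{fig:busemann}), so the intersections you invoke are of lines in that plane, not of skew lines in $\RP^3$; and the degenerate case $y\in\cl{x\xi^+}$ (where $\beta_\xi(x,y)=\pm d_\Om(x,y)$) should be handled separately or absorbed by continuity from the generic case.
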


\begin{proof}
  Since $\xi$ is a smooth point in $\bdv\Om$, the Busemann functions at
  $\xi$ are well-defined by Lemma \ref{lem:busemann}.  
%  By the cocycle property,
%  it suffices to consider $y$ in $\Om$ such that $\beta_\xi(x,y)=0$.  
  Let $x^-$, $y^-$ be the other intersection points of $\cl{\xi x}$,
  $\cl{\xi y}$ with $\bd\Om$, respectively. Let $H_{\xi}$ denote the unique
  supporting hyperplane to $\Om$ at $\xi$. We proceed under the assumption
  that 
  $x^-\neq y^-$, so that $x,y,$ and $\xi$ determine a projective plane $P$,
  though the arguments easily generalize to the case where $x,y,$ and
  $\xi$ are collinear by Lemma \ref{lem:busemann}. 
  To complete the setup, let $q$ again be the unique intersection of the
  line $\cl{x^-y^-}$ with the hyperplane $H_\xi$ in $P$. 
%  Since $\beta_\xi(x,y)=0$,
%  there is a point $p\in \R\P^3$ such that $\cl{xy}\cap \cl{x^-y^-}\cap
%  H_{\xi} =\{p\}$ by the geometric characterization of the Busemann
%  function (Lemma \ref{lem:busemann} and Figure \ref{fig:busemann}).  A
  See Figure \ref{fig:continuitybusemann} for clarity. 
%  (where $\beta_\xi(x,y)=0$ for simplicity). 

  Let $\xi_n$ in $\partial\Omega$ be a sequence of smooth boundary points
  converging to $\xi$ and let $H_{\xi_n}$ be the unique supporting
  hyperplanes to $\Omega$ at $\xi_n$. Take $x_n^-$ to be the other
  intersection point of $\cl{x\xi_n}$ with $\bd\Om$ and $y_n^-$ the other
  intersection point of
  $\cl{y\xi_n}$ with $\bd\Om$. Let $P_n$ be any projective plane containing
  $x,y$ and $\xi_n$; since $\xi_n$ converges to $\xi$, then $P_n$ also
  converges to the plane $P$ containing $x,y$ and $\xi$. Lastly, take
  $q_n$ to be the intersection point in $P_n$ of the line
  $\overline{x_n^-y_n^-}$ with the supporting hyperplane $H_{\xi_n}$.

  By Lemma \ref{lem:busemann}, to show that $\beta_{\xi_n}(x,y)$ converges
  to $\beta_\xi(x,y)$, it suffices to show that $q_n$ converges to
  $q$. The points $q_n$ in the compact complement of
  $\Omega$ must accumulate on some point $q'$, which must be in the plane
  $P$ because the planes $P_n$ converge to $P$, and must also be on the
  line $\overline{x^- y^-}$ so it cannot equal $\xi$. The lines 
  $\overline{\xi_n
  q_n}$ are disjoint from $\Omega$, hence the same holds for the limiting
  line $\overline{\xi q'}$. Then $q'$ lies on the unique supporting
  hyperline to $\Omega\cap P$ at $\xi$, and must equal the unique
  intersection of $\overline{x^-y^-}$ with this line. 
\end{proof}

%  Let $L^n_{xy}=\cl{x_n^-y_n^-}$ and $\{q_n\}=L_{xy}^n\cap
%  H_{\xi_n}$.  Then we construct a sequence $y_n$ by the intersection of
%  $\cl{yq_n}$ with $\cl{x\xi_n}$. 

%  By contrapositive, suppose $\beta_{\xi_n}(x,y)$ does not converge to
%  $\beta_\xi(x,y)$.
%  Then there is a subsequence $n_j$ such that
%  $|\beta_{\xi_{n_j}}(x,y)-\beta_\xi(x,y)|\geq\ep>0$.  Since
%  $\RP^3\smallsetminus \Om$ is compact, up to extraction of subsequences we
%  can assume the sequence of points $q_{n_j}$ converges to some point
%  $q'\notin\Om$. Then 
%  the sequence of lines $\cl{\xi_{n_j} q_{n_j}}$ in the complement of
%  $\Omega$ 
%  converges to the line $\cl{\xi q'}$,
%%  $\R\P^3\smallsetminus \Om$, 
%  making $\cl{\xi q}$ a supporting hyperline to
%  $\Om$ at $\xi$.  Also, $q'$ is on the line $\cl{x^-y^-}$. 
%
%  Now, as $\{y_{n_j}\} = \cl{y q_{n_j}}\cap \cl{x\xi_{n_j}}$, and
%  $\xi_{n_j}$ converges to $\xi$, we conclude $y_{n_j}$ converges to
%  $\bar{y}$ where $\{\bar{y}\} = \cl{yq}\cap\cl{x\xi}$. Then  
%  $\beta_{\xi_{n_j}}(x,y)$ converges to $d_\Om(x,\bar{y})$,
%  hence $d_\Omega(x,\bar{y})\geq \epsilon$ as well, implying 
%  $\bar{y}\neq x$ and $q\neq p$.  It follows that $\cl{\xi q}$ is a
%  supporting projective subspace to $\Om$ at $\xi$ which is not contained
%  in $H_\xi$, contradicting that $\xi$ is smooth. 
%\end{proof}

\begin{figure}[t]
  \centering
  \begin{tikzpicture}[scale=1.5] 
    \def\sep{1.2}

    \draw (0,-.2) coordinate (a) 
    (2,-.5) coordinate (b)
    (1,1.25) coordinate (c); 
    \draw (a) .. controls ++ (.2,-.6) and ++(-.4,-.6) .. (b) coordinate[pos=.65]
    (xi);
    \draw (b) .. controls ++ (.5,.6) and ++(1.2,-.3) .. (c) coordinate[pos=.62]
    (y-) coordinate[pos=.87] (yn-);
    \draw (a) .. controls ++ (-.1,.4) and ++ (-1.2,.1) .. (c) coordinate[pos=.95]
    (x-) coordinate[pos=.7] (xn-);

    \draw (xi) node[circle, inner sep=\sep, fill=black, label={below left:{$\xi$}}] {};
    \draw (y-) node[circle, inner sep=\sep, fill=black, label={above right:{$y^-$}}] {};
    \draw (x-) node[circle, inner sep=\sep, fill=black, label={above:{$x^-$}}] {};

%%% IF YOU ADJUST AND HAVE ERRORS it might be because of finding q
    \draw[name path=findq1, draw = none] (xi)++(5.5,0)  -- (xi) ;
    \draw[name path=findq2, draw=none] [add=0 and 2.85] (x-) to (y-);
    \path [name intersections={of=findq1 and findq2,by=q}];

    \draw (xi) -- (q) node[midway, below] {$ H_\xi$}; 
    \draw (q) node[circle, inner sep=\sep, fill=black, label={below:{$q$}}] {};
    \draw (x-)--(q);

    \draw[name path=XI] (x-)-- coordinate[pos=.3] (o) coordinate[pos=.6] (x) 
    (xi);
    \draw[name path=ETA] (xi) -- coordinate[very near start] (eta)  (y-);
    \draw (x) node[circle, inner sep=\sep, fill=black, label={left:{$x$}}] {};

    \draw[name path=CR2] (x)--(q);
    \path [name intersections={of=ETA and CR2,by=y}];
    \node [circle, fill, inner sep=\sep,label=below right:$y$] at (y) {};

%%% Finding p 
		%%%%% THIS CANNOT BE ADJUSTED
    \begin{scope}[dashed]

      \draw (yn-) node[circle, inner sep=\sep, fill=black, label={above
      right:{$y_n^-$}}] {};
      \draw (xn-) node[circle, inner sep=\sep, fill=black, label={above
      left:{$x_n^-$}}] {};
      \draw[add = 0 and .7] (xn-) to (x) coordinate (xin); 
      \draw (xin) node[circle, inner sep=\sep, fill=black, label={below
      right:{$\xi_n$}}] {};

      \draw[name path=findp1, draw=none] (xin)++(25:5.5) -- (xin);
      \draw[add=0 and 5.5, name path=findp2, draw=none] (xn-) to (yn-);

      \path [name intersections={of=findp1 and findp2,by=p}];
      \draw (xin)--(p) node[circle, fill, inner sep=\sep,
      label=right:$q_n$] {} --(xn-);

      \draw (xin)--(yn-);

      \draw[add=0 and .07] (p) to (y) {};
%      node[circle, fill, inner sep=\sep,
%      label=below left:$y_n$] {};

    \end{scope}

  \end{tikzpicture}
  \caption{
%    \color{red}
%    update caption so that this is more clear
%    \color{black}
    For the proof of Lemma \ref{lem:continuitybusemann}. 
    For clarity and simplicity, the figure only depicts the case where 
    $\Omega$ is two-dimensional and $x,y$ are such 
    that $\beta_\xi(x,y)=0$. By Lemma \ref{lem:busemann}, to
    show that the Busemann functions $\beta_{\xi_n}(x,y)$ converge to
    $\beta_\xi(x,y)$ as $\xi_n$ converges to $\xi$, it suffices to show
    that $q_n$ converges to $q$. 
%    If there exists a sequence
%    $\xi_n$ such that $y_n\to\bar{y}\neq x$ on $(x\xi)$, then $\xi$ is not
%    smooth.
}
  \label{fig:continuitybusemann}
\end{figure}

Since horospheres are zero sets of the Busemann function, we have:

\begin{corollary}
  Horospheres based at smooth boundary points are globally defined,
  continuous, and vary continuously over smooth points. 
  \label{cor:horospheres}
\end{corollary}

\section{Patterson-Sullivan Theory}
\label{sec:pattersonsullivan}
%\color{red} 
%this discussion is now dated
%\color{black}
%
In this section, we construct the Patterson-Sullivan density for the
universal cover of a Benoist 3-manifold.
%We will prove
%first that it is a 
%%quasi-conformal 
%Busemann 
%density of dimension $\delta_\Gamma$
%supported on $\partial\Omega$, and see later that it is in fact conformal. 
%parameterized by $x\in\Om$ which are compatible with the group action $\Gam$.  
The density is named for the independent work of Patterson and Sullivan in
negative curvature and has since been generalized to many settings,
including rank one manifolds \cite{patterson,sullivan79,Kn97}.  Theorems
\ref{thm:mainthm3} and \ref{thm:mainthm4} follow the study of these
measures and their properties.  To generalize the results beyond dimension
three, we need deeper understanding of the geometry of the flats and
hyperbolicity of the group in higher dimensions. 

\subsection{Poincar\'e Series and the critical exponent}
\label{sec:criticalexponent}

The \emph{critical exponent}, $\delta_\Gam$, of a group $\Gam$ acting discretely, properly
discontinuously, and by isometries on $(\Om,d_\Om)$ is the critical value of $0\leq s\in\R$ for the
\emph{Poincar\'e series},
\[
  P(x,y,s)=\sum_{\gam\in\Gam}e^{-sd_\Om(x,\gam y)}. 
\]

The group $\Gam$ is of \emph{divergent type} if $P(x,y,\delta_\Gam)$ diverges and \emph{convergent
type} if $P(x,y,\delta_\Gam)$ converges.  It is straightforward to verify that convergence of
$P(x,y,s)$ does not depend on $x$ or $y$ by
the triangle inequality and that we can realize $\delta_\Gam =
\displaystyle\limsup_{t\to\infty}\frac1t\log N_\Gam(x,t)$ where $N_\Gam(x,t):=\#\{\gam\in\Gam\mid d_\Om(x,\gam
x)\leq t\}$ for some $x\in\Omega$.  
%This limit does not depend on $x$. 
By previous work we have that $\delta_\Gam>0$ \cite{braytop}. 
When $\Gamma$ is a discrete group acting on $\Omega$ with finite co-volume, 
$\delta_\Gam \leq \dim(\Om)-1$, 
with equality if and only if $\Om$ is the ellipsoid; this generalizes a
result of Crampon for the strictly convex case \cite{BMZ,Cr09}. In our
setting where the quotient $\Omega/\Gamma$ is compact, the inequality
$\delta_\Gamma\leq \dim(\Omega)-1$, without the rigidity statement, follows
quickly from a theorem of Tholozan that the volume growth entropy is
bounded above by $\dim(\Omega)-1$ \cite{tholozan17}.  Although the theorem
of Tholozan requires no group action at all, in the cocompact case, the
critical exponent and volume growth entropy coincide, hence the result can
be applied to produce a bound on the critical exponent. %as desired. 
%which is how Tholozan's theorem applies to our setting. 

\subsection{Patterson-Sullivan densities}
\label{sec:confdensities}

We will now prove that a 
%conformal 
Busemann density exists for the
universal cover of a Benoist 3-manifold. The argument will depend on
features of the Benoist 3-manifolds discussed in Remark
\ref{rem:nonsmoothext}.

\begin{proposition}
  There exists a 
%  conformal 
%  quasi-conformal 
  Busemann density of dimension $\delta_\Gam>0$ on $\bdv\Om$, called a
  Patterson-Sullivan density. 
%  If smooth extremal points have full support
%  for the density, then the density is moreover conformal. 
  \label{prop:existenceconfdensity}
\end{proposition}

\begin{proof}
  The construction follows Patterson and Sullivan \cite{patterson,
  sullivan79}. For $s>\delta_\Gam$, choose an observation point $o\in\Om$
  for the measures and for the visual boundary. For each $x\in\Om$ define a
  measure on $\cl{\Om}$ by 
  \[
    \mu_{x,s} = \frac1{P(o,o,s)}\sum_{\gam\in\Gam} e^{-sd_\Om(x,\gam o)}
    \delta_{\gam o}
  \]
  where $\delta_p$ is the Dirac mass at $p$. Note that for $s>\delta_\Gam$,
  $\mu_{x,s}$ is supported on $\Om$. Also, by definition of the critical
  exponent, if $s>\delta_\Gam$ then $P(x,y,s)$ is finite for all
  $x,y\in\Om$ so $\mu_{x,s}(\cl{\Om})= P(x,o,s)/P(o,o,s)<\infty$. By
  compactness of $\cl{\Om}$ we may extract a weak limit by choosing a
  convergent subsequence as $s$ decreases to $\delta_\Gam$ to obtain a
  finite nontrivial measure,
  \[
    \mu_x = \lim_{s_n\to\delta_\Gam^+} \mu_{x,s_n}.
  \]
  If the Poincar\'e series diverges at $\delta_\Gamma$ ($\Gam$ is of
  divergent type), then the total mass of $\mu_{x,s}$ is pushed to
  $\bdv\Om$ as $s$ decreases to $\delta_\Gam$ and $P(o,o,s)\to\infty$.  At
  the limit, $\supp\mu_x\subset \bdv\Om$.  If the Poincar\'e series
  converges at $\delta_\Gam$ ($\Gam$ is of convergent type), then we follow
  Patterson's method for Fuchsian groups which generalizes to any manifold
  group \cite{patterson}. First, he showed it is possible to constuct an
  increasing function $f\colon\R^+\to\R^+$ with subexponential growth: that
  is, for all $\ep<0$, there exists an $x_0(\ep)>0$ such that for all
  $x>x_0$, $y>0$ 
  \[
    f(x+y)\leq f(x) e^{y\cdot \ep},
  \]
  and the modified Poincar\'e series 
  \[
    P_f(x,y,s) = \sum_{\gam\in\Gam} f(d_\Om(x,\gam y))e^{-sd_\Om(x,\gam
    y)}
  \]
  has the same critical exponent $\delta_\Gamma$ and diverges at
  $s=\delta_\Gam$ \cite[Lemma 3.1]{patterson}. Then we denote by $\mu_x^f$
  a weak limit as $s$ decreases to $\delta_\Gamma$ of 
  \[
    \mu_{x,s}^f = \frac1{P_f(o,o,s)}\sum_{\gam\in\Gam} f(d_\Om(x,\gam
    o))e^{-sd_\Om(x,\gam o)}\delta_{\gam o}.
  \]
  Taking $f\equiv1$ recovers $\mu_x$, so we will check that these measures
  satisfy the definition of a Busemann density for the case that $\Gam$ is
  convergent.

  We remark first that $P_f(o,o,s)$ exhibits the same convergence and
  divergence behavior as $P(o,o,s)$ for $s\neq\delta_\Gam$ so $\mu^f_{x,s}$
  will be a finite nontrivial measure supported on point masses in $\Om$
  much like $\mu_{x,s}$. Taking a weak-limit then produces a finite
  nontrivial measure $\mu_x^f$ supported on $\bdv\Om$ by the divergence of
  $P_f(o,o,s)$ as $s$ decreases to $\delta_\Gamma$.  Moreover, for any
  Borel measurable set $A\subset \cl{\Om}$, 
  \begin{multline*}
    \mu_{x,s}^f(\gam^{-1}A)  = \frac1{P_f(o,o,s)}\sum_{g\in\Gam}
    f(d_\Om(x,go))e^{-sd_\Om(x,go)}\delta_{go}(\gam^{-1}A) \\ =
    \frac1{P_f(o,o,s)}\sum_{\gam g\in\Gam} f(d_\Om(\gam x,\gam g
    o))e^{-sd_\Om(\gam x,\gam go)} \delta_{\gam go}(A) = \mu_{\gam
    x,s}^f(A).
  \end{multline*}
  Then the quasi-$\Gam$-invariance property from Definition
  \ref{def:busemanndensity} holds for any weak limit $\mu_{x}^f$.  Since
  $\mu^f_{x,s}$ is supported on countably many point masses in $\Om$ for
  $s>\delta_\Gam$, we compute 
  \begin{align*}
    \frac{d\mu_{x,s}^f}{d\mu_{y,s}^f}(\gam o) = \frac{\mu_{x,s}^f(\gam
  o)}{\mu_{y,s}^f(\gam o)} = \frac{f(d_\Om(x,\gam o))e^{-sd_\Om(x,\gam
  o)}}{f(d_\Om(y,\gam o))e^{-s d_\Om(y,\gam o)}} = \frac{f(d_\Om(x,\gam
  o))}{f(d_\Om(y,\gam o))} e^{-s \beta_{\gam o}(x,y)}.
\end{align*}
As $s\to\delta_\Gam$, indeed $\supp\mu^f_{x,s},\supp\mu^f_{y,s}$ is
pushed to $\bdv\Om$.  By the increasing and subexponential properties of
$f$, for all $\ep>0$ we have that for all $\gam o$ such that $d_\Om(\gam
o,y)$ is sufficiently large,
\[
  f(d_\Om(x,\gam o))\leq f(d_\Om(y,\gam o)+d_\Om(x,y))\leq f(d_\Om(y,\gam
  o) ) \;e^{d_\Om(x,y)\cdot \ep}.
\] 
Then for any $\gamma\in\Gamma$ such that $d(x,\gamma o)$ is sufficiently
large, 
%sequence $\gamma_no$ converging to a point
%$\xi\in\partial\Omega$, and for all $\epsilon>0$ and $s>\delta_\Gamma$,
%if $n$ is sufficiently large then 
\[
  e^{-d_\Omega(x,y)\epsilon} e^{-s\beta_{\gamma o}(x,y)}\leq 
  \frac{d\mu_{x,s}^f}{d\mu_{y,s}^f}(\gamma o) 
  \leq e^{d_\Omega(x,y)\epsilon} e^{-s \beta_{\gamma o}(x,y)}.
\]
To extend the Radon-Nikodym derivative to the limit, let $D$ be any compact
fundamental domain containing the fixed point $o$, and let
$\xi\in\partial\Omega$ be arbitrary. If $\xi$ is smooth, by minimality of
$\Gamma$ acting on $\partial\Omega$ \cite[Proposition 3.10]{Ben4} there
exists a sequence of group elements $\gamma_n$ such that $\gamma_no$
converges to $\xi$. Then apply Lemma \ref{lem:busemann} to the smooth point
$\xi$ to conclude
$\beta_{\gamma_n o}(x,y)$ converges to the well-defined Busemann function
$\beta_\xi(x,y)$ as desired. 

%{\color{red} this is actually pretty important - $\xi$ cannot
%simultaenously fail to be smooth and extremal for the Benoist 3-manifolds.
%in general of course this is false. you need to make a point of this!! and
%point it out to the referee. and add citation below}
If $\xi$ is not smooth, then $\xi$ must be extremal by Benoist's structure
theorems, as discussed in Remark \ref{rem:nonsmoothext}.
Cover the projective ray from $o$ to
$\xi$ by orbits $\gamma_n D$ of $D$ under the group $\Gamma$, so
$d_\Omega(\gamma_no,o)$ diverges. For each
$n$, choose a point $z_n$ on the projective ray from $x$ to $\xi$ that lies
in $\gamma_n D$. Then $z_n$ converges to $\xi$ along a projective ray, and
by the triangle inequality,
%and compactness of the fundamental domain, there
%exists a uniform constant $C>0$ such that 
\[
  |\beta_{\gamma_no}(x,y)-\beta_{z_n}(x,y)|\leq 2d_\Omega(\gamma_no,z_n).
%  <C.
\]
%The transformation rule for quasi-conformal Busemann densities (see
%Definition \ref{def:busemann}) follows. 
%
%Finally, let us now assume smooth extremal points have full measure. 
%If $\xi$ is smooth and extremal, then we can upgrade the argument above to
Since $\xi$ is extremal, we can in fact choose a sequence of points $z_n$
on the projective ray from $o$ to $\xi$ such that
$d_\Omega(\gamma_no,z_n)$ converges to zero, and hence any accumulation of
$\beta_{\gamma_n o}(x,y)$ as $n$ goes to $\infty$ is bounded above and
below by $\beta_\xi^\pm(x,y)$, as desired.
%converges to $\beta_\xi(x,y)$, which is well-defined for smooth
%extremal points. 
To construct the sequence, let $\xi_n$ be the
endpoint in $\partial \Omega$ of the projective ray from $o$ passing through
$\gamma_no$. It suffices to show that $\xi_n$ converges to $\xi$. By
contrapositive, suppose $\xi_n$ does not converge to $\xi$.
Choose $R$ larger than twice the diameter of the compact fundamental domain $D$.
Since $\xi$ is extremal, by Lemma \ref{lem:shadows_converge_to_smoothext},
the shadows $\mathcal O_R(o,y)$ around points $y$ on the projective ray
$(o\xi)$ from $o$ to $\xi$ form a neighborhood basis for $\xi$. The
assumption that $\xi_n$ does not converge to $\xi$ implies that there
exists a $T$ and a subsequence $\xi_{n_j}$ such that for all $y$ on the
projective ray $(o\xi)$ distance at least $T$ from $o$, then $\xi_{n_j}$ is
not in $\mathcal O_R(o,y)$, and equivalently, $\gamma_{n_j}o$ is not in the
ball $B_\Omega(y,R)$. But then, for $j$ large, $\gamma_{n_j}o$ cannot be in
the image of the fundamental domain $\gamma_{n_j}D$, and we conclude the
argument. 
%The transformation rule from Definition \ref{def:busemanndensity} then
%follows for all $\xi$ in the support of the weak limit $\mu_x^f$, since
%such $\xi$ must be accumulation points of the $\Gamma$-orbit of $o$. 
%%  any point in $\Omega$. 
%
%{\color{red}choose a sequence $\gamma_n$ converging to $\xi$ by taking
%coverings of the projective line from $x$ to $\xi$}
\end{proof}

\begin{remark}
  \label{rem:psmeasures}
  The Patterson-Sullivan measures are Borel measures on
  $\partial \Omega$ and have full support by quasi-$\Gamma$-invariance and
  minimality of the action of $\Gamma$ on $\partial \Omega$
  \cite[Proposition 3.10]{Ben4}. 
\end{remark}

\subsection{The Shadow Lemma and applications}

In this subsection we prove Sullivan's Shadow Lemma in the setting of interest \cite{sullivan79}.

\subsubsection{Geometric lemmas}

Define $\gam \in \Aut(\Om)$ to be {\em hyperbolic} if $\gam$ has an attracting fixed point and a
repelling fixed point in $\bd\Om$, denoted $\gam^+$ and $\gam^-$, which are both smooth and
extremal, and $\gamma$ has no other fixed points in $\overline{\Omega}$.  This definition diverges
from the classical definition that the translation length of $\gam$ is positive and realized in
$\Om$, which is a consequence but not equivalent.  We choose this definition in this setting to
separate stabilizers of triangles from group elements that act hyperbolically with north-south
dynamics, since both such isometries have positive translation length realized in $\Om$. We will
need a proposition from the topological study of the Benoist 3-manifolds, which is straightforward
given Theorem \ref{thm:ben3mfldgeom} of \cite{Ben4}:

\begin{proposition}[{\cite{braytop}}]
  If $M=\Om/\Gam$ is a Benoist 3-manifold then $\Gam$ is the disjoint union of hyperbolic isometries
  and stabilizers of properly embedded triangles. There are infinitely many conjugacy classes of
  hyperbolic group elemments.
  \label{prop:toppaperhyperbolic}
\end{proposition}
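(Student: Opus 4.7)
The plan is to prove the dichotomy by analyzing each nontrivial $\gam \in \Gam$ through its invariant axis. By discreteness, torsion-freeness, and cocompactness of the action, every nontrivial $\gam$ has positive translation length realized along some $\gam$-invariant geodesic $A_\gam \subset \Om$ which descends to a closed geodesic in $M$. Using the Jordan form of $\gam \in \PSL(4,\R)$ together with the fact that $\gam$ preserves $\Om$, I enumerate the fixed points of $\gam$ on $\bd\Om$ and classify the endpoints of $\cl{A_\gam} \cap \bd\Om$ according to whether they are proper extremal.

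The dichotomy then splits cleanly. If both endpoints of $\cl{A_\gam}$ are proper extremal, then $\gam$ fixes exactly two points in $\bd\Om$, both proper and extremal, so $\gam$ is hyperbolic by definition. Otherwise, some endpoint lies on $\bd\tri$ for a properly embedded triangle $\tri \in \mathcal T$ by Theorem \ref{thm:ben3mfldgeom}(f). The disjointness of triangle closures (Theorem \ref{thm:ben3mfldgeom}(b)) then forces $\gam(\tri) = \tri$, so $\gam \in \Stab_\Gam(\tri)$. The two cases are mutually exclusive, since a triangle stabilizer must fix all three nonproper vertices of $\tri$, which is incompatible with the proper extremal fixed-point profile of a hyperbolic element.

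For the infinitely many conjugacy classes, I would use a ping-pong argument. Existence of at least one hyperbolic element follows from Theorem \ref{thm:ben3mfldgeom}(e): cutting $M$ along the finite collection $\Sig$ of tori and Klein bottles leaves atoroidal pieces whose fundamental groups contain nonperipheral elements, and any lift of such an element to $\Gam$ is not contained in any triangle stabilizer, hence hyperbolic by the dichotomy above. Given a hyperbolic $\gam_0$, minimality of the $\Gam$-action on $\bdv\Om$ (Lemma \ref{lem:minimal}) provides $g \in \Gam$ with $g\{\gam_0^+,\gam_0^-\} \cap \{\gam_0^+,\gam_0^-\} = \varnothing$. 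A standard ping-pong on $\bdv\Om$ between sufficiently high powers of $\gam_0$ and $g\gam_0 g^{-1}$ produces a rank-two free subgroup consisting (apart from the identity) of hyperbolic elements with pairwise distinct axes modulo $\Gam$, yielding infinitely many conjugacy classes.

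The main obstacle is the first step: in the nonstrictly convex setting, one must rule out pathological axes that neither terminate at proper extremal points nor lie within a single triangle. Here the strong structural input of Theorem \ref{thm:ben3mfldgeom}, especially parts (b) and (f) controlling the boundary combinatorics of the triangles in $\mathcal T$, is what makes the classification tractable in dimension three.
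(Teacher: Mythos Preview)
The paper does not prove this proposition; it is quoted from \cite{braytop} with only the remark that it is ``straightforward given Theorem~\ref{thm:ben3mfldgeom}.'' So there is no argument here to compare against directly, and your outline is a plausible reconstruction of how such a proof would go.

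That said, there is a genuine gap in your dichotomy. Your reduction ``endpoint not proper extremal $\Rightarrow$ $\gam\in\Stab_\Gam(\tri)$'' via Theorem~\ref{thm:ben3mfldgeom}(b),(f) is correct: if $\gam$ fixes a point of $\bd\tri$ then $\cl{\gam\tri}\cap\cl{\tri}\neq\varnothing$, forcing $\gam\tri=\tri$. But in the complementary case you assert that $\gam$ has \emph{exactly} two fixed points in $\bd\Om$, which is the content of the definition of ``hyperbolic'' here, and you do not justify this. Your argument shows only that every fixed point of $\gam$ on $\bd\Om$ is proper extremal; it does not rule out a third proper extremal fixed point. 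One needs an additional step, e.g.\ showing that a third fixed point forces a $\gam$-invariant segment in $\bd\Om$ (hence a stabilized triangle via (f)), or invoking biproximality of $\gam$ and locating the intermediate eigenlines outside $\cl\Om$. Simply citing the Jordan form does not close this.

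Two smaller points. First, your mutual-exclusivity claim that a triangle stabilizer ``must fix all three nonproper vertices'' is slightly too strong: Theorem~\ref{thm:ben3mfldgeom}(c) gives $\Z^2$ only up to index two, so an element may transpose two vertices. What you actually need, and what holds, is that any $\gam\in\Stab_\Gam(\tri)$ fixes at least one vertex of $\tri$, hence a nonproper point, which already contradicts the definition of hyperbolic. Second, your ping-pong for infinitely many conjugacy classes is fine in spirit, but you should say why the free-group elements you produce are pairwise nonconjugate in $\Gam$ (not just in the free subgroup); the cleanest route is to observe that their translation lengths are unbounded.
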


The immediate goal is to prove the following geometric proposition, similar to that in
\cite{ballman95}, as needed for the Shadow Lemma.

\begin{proposition}
  Fix $x\in\Omega$.  For any two noncommuting hyperbolic isometries $g,h$ preserving $\Om$ and $O$ a
  sufficiently small neighborhood of $h^+$, there exists an $R$  large and $M\in \N$ such that for
  all $r\geq R$ and all $y\in\Om$, either $h^M O\subset \mathcal O_r(y,x) $ or $g^Mh^M O\subset
  \mathcal O_r(y,x)$.  
  \label{prop:geometricprop}
\end{proposition}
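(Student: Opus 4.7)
The plan follows Ballmann's strategy \cite{ballman95}: combine the north-south dynamics of hyperbolic isometries with a thin-triangle property at proper extremal boundary points. Since $g,h$ are noncommuting hyperbolic isometries (Proposition \ref{prop:toppaperhyperbolic}), I would first argue that $\{h^\pm\}$ and $\{g^\pm\}$ are four pairwise distinct proper extremal points in $\bd\Om$; otherwise the two projective transformations would have to commute by the standard fact that hyperbolic isometries sharing a fixed point on the boundary have overlapping north-south dynamics. Then I shrink $O$ so that it consists only of proper extremal points and is disjoint from small neighborhoods of $h^-$ and $g^-$, using Theorem \ref{thm:topologybasepoints} to view $O$ as a subset of $\bd\Om/\sim_\sig$ and the density of proper extremal points in $\bd\Om$ (Theorem \ref{thm:ben3mfldgeom}).

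Second, I apply north-south dynamics for hyperbolic projective transformations: for $M$ large $h^M$ maps the complement of any fixed neighborhood of $h^-$ into any prescribed neighborhood of $h^+$, so $h^MO$ lies in an arbitrarily small neighborhood $U_h$ of $h^+$. Since $h^+\neq g^-$, iterating $g^M$ analogously places $g^Mh^MO$ inside an arbitrarily small neighborhood $U_g$ of $g^+$. Taking $M$ large shrinks both $U_h,U_g$ as much as needed for the continuity step below.

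The heart of the argument is a thin-triangle estimate at proper extremal points: there exists $R_0$, depending only on $x,h^+,g^+$, such that for every $y\in\Om$, either the projective ray $(yh^+)$ enters $B_\Om(x,R_0)$ or the projective ray $(yg^+)$ enters $B_\Om(x,R_0)$. The projective segment $(h^+g^+)$ passes through $\Om$ since $h^+\neq g^+$ are both proper extremal, and its distance $d_0:=d_\Om(x,(h^+g^+))$ from $x$ is finite. Passing to the two-dimensional projective slice $P$ through $x,h^+,g^+$, the boundary of $\Om\cap P$ near $h^+,g^+$ is strictly convex (Theorem \ref{thm:ben3mfldgeom}(f,g)), so one invokes the Gromov-hyperbolicity estimates available for strictly convex Hilbert geometries \cite{Ben1} to conclude that one of the sides $(yh^+),(yg^+)$ of the geodesic triangle $y,h^+,g^+$ comes within $R_0=d_0+\delta$ of $x$ for a uniform hyperbolicity constant $\delta$.

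Finally, extend from the fixed points $h^+,g^+$ to their neighborhoods by continuity. By Lemmas \ref{lem:topologybasepoints} and \ref{lem:continuitybusemann}, the projective ray $(y,\xi)$ varies continuously in $\xi$ over proper extremal points, so for $M$ large enough the neighborhoods $h^MO\subset U_h$ and $g^Mh^MO\subset U_g$ are small enough that any geodesic from $y$ to a point in $h^MO$ (resp.\ $g^Mh^MO$) tracks the ray $(yh^+)$ (resp.\ $(yg^+)$) closely near $x$. Setting $R=R_0+C$ for a slack constant $C$ controlling the continuity modulus, if $(yh^+)\cap B_\Om(x,R_0)\neq\varnothing$ then $h^MO\subset\mathcal O_r(y,x)$ for all $r\geq R$, and otherwise by the thin-triangle step $(yg^+)\cap B_\Om(x,R_0)\neq\varnothing$ and symmetrically $g^Mh^MO\subset\mathcal O_r(y,x)$.

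The main obstacle is the thin-triangle step, since the rays $(yh^+),(yg^+)$ need not lie in the two-dimensional slice $P$ unless $y\in P$. A resolution is to project $y$ onto $P$ and exploit the monotonicity $d_\Om\leq d_{\Om\cap P}$ together with Benoist's structural theorem to control how much the geodesic can leave $P$; alternatively, one carries out the triangle comparison directly in $\Om$, using Theorem \ref{thm:ben3mfldgeom} to argue that the flats do not obstruct the hyperbolic behavior concentrated at the proper extremal vertices $h^+,g^+$.
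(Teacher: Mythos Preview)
Your overall Ballmann-style strategy is sound in spirit, but there is a genuine gap, and the paper's proof proceeds differently.

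A minor correction first: you cannot shrink $O$ to consist only of proper extremal points, since by Theorem~\ref{thm:ben3mfldgeom}(g) triangle vertices are dense in $\bd\Om$, so every nonempty open subset of $\bdv\Om$ meets triangle boundaries.

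The substantive gap is the uniformity in $y$, both in your thin-triangle step and in your continuity step. For the latter you assert that once $U_h$ is small enough, every ray $(y,\eta)$ with $\eta\in U_h$ tracks $(y,h^+)$ near $x$ up to a slack $C$ independent of $y$. This fails: if some properly embedded triangle $\tri$ has $\bd\tri$ meeting $U_h$, then for $y$ tending to a point of $\bd\tri$ and $\eta\in U_h\cap\bd\tri$, the segment $(y\eta)$ stays near $\bd\Om$ and misses every fixed ball about $x$. The dichotomy can only be rescued if no single triangle has boundary meeting both $U_h$ and $U_g$; this is exactly Lemma~\ref{lem:ballmanstyle}, which you neither invoke nor prove. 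Your proposed fixes for the thin-triangle step (project $y$ to the slice $P$, or compare directly in $\Om$) do not address this obstruction either, because the issue is a flat connecting the two neighborhoods, not a failure of two-dimensional hyperbolicity near $h^+,g^+$.

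The paper avoids thin-triangle comparisons altogether. After establishing Lemma~\ref{lem:ballmanstyle}, it reverses your use of north--south dynamics: instead of pushing $O$ forward into small sets near $h^+,g^+$ and then handling an arbitrary $y$, it pulls $y$ back by $h^{-M}$ or $h^{-M}g^{-M}$ into the convex hull $\mathcal C V_h^-$, so that one only needs to show that lines from $\mathcal C V_h^-$ to $\overline{V_h^+}$ pass within bounded distance of $x$. Lemma~\ref{lem:ballmanstyle} guarantees that none of these lines lies in a triangle, which is what makes that bound uniform.
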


We first prove two geometric lemmas.  Let $\mathcal C A$ be the convex hull of a subset $A$ in our
affine chart for $\Om$. 

\begin{lemma}
  If $h$ is a hyperbolic isometry then for any open sets $O^+\subset \bd\Om$ containing $h^+$ and
  $\mathcal O^-$ containing $h^-$, there exists an $N\in\N$ such that for all $n\geq N$,
  \[
    h^n (\cl{\Om} \smallsetminus \mathcal C O^-) \subset O^+ \text{ and }
    h^{-n}(\cl{\Om}\smallsetminus \mathcal C O^+)\subset O^-.
  \]
  \label{fct:hyperbolicisoms}
\end{lemma}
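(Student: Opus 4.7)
The plan is to exploit north--south (biproximal) projective dynamics of a hyperbolic element. Let $\tilde h \in \SL(4,\R)$ be a lift of $h$. Since $h$ preserves the properly convex $\Om$ and has proper extremal fixed points $h^\pm$, I would first show that $\tilde h$ is biproximal: it admits a simple dominant eigenvalue $\lambda_1 > 0$ with eigenline $\R e_1$ projecting to $h^+$, a simple smallest eigenvalue $\lambda_4 > 0$ with eigenline $\R e_4$ projecting to $h^-$, and remaining eigenvalues $\lambda_2, \lambda_3$ satisfying $\lambda_1 > |\lambda_j| > \lambda_4$ for $j = 2,3$. Simplicity at the extremes is forced by extremality: a nontrivial Jordan block or coincident eigenvalue at $\lambda_1$ (resp.\ $\lambda_4$) would produce a nontrivial projective simplex in $\cl\Om$ with apex at $h^+$ (resp.\ $h^-$), violating extremality.

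In the eigenbasis, the hyperplane $H^- := \operatorname{span}(e_2, e_3, e_4)$ is $h$-invariant, contains $h^-$ but not $h^+$, and the eigenvalue gap yields
\[
	h^n [x_1:x_2:x_3:x_4] \longrightarrow [1:0:0:0] = h^+
\]
uniformly on compact subsets of $\RP^3 \smallsetminus H^-$. Because $\cl\Om$ is $h$-invariant and lies on one side of $H^-$ (with $h^- \in H^- \cap \cl\Om$), this $H^-$ coincides with the unique supporting hyperplane $H_{h^-}$ to $\Om$ at $h^-$. I would then prove the key geometric statement $H_{h^-} \cap \cl\Om = \{h^-\}$: the intersection is convex and contained in $\bd\Om$, so if it contained a point $p \neq h^-$, the open segment $(h^- p)$ would lie in $\bd\Om$. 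By Theorem~\ref{thm:ben3mfldgeom}(f) such a segment is contained in the boundary of some properly embedded triangle $\tri$, forcing $h^-$ to be either interior to an edge of $\tri$ (contradicting extremality) or a vertex of $\tri$ (which has multiple supporting hyperplanes, contradicting that $h^-$ is proper).

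To conclude, the compact set $\cl\Om \smallsetminus \mathcal C O^-$ omits $h^-$ and therefore is disjoint from $H_{h^-} \cap \cl\Om = \{h^-\}$; as a compact subset of $\RP^3 \smallsetminus H^-$ it is pushed into any prescribed neighborhood of $h^+$ by all sufficiently large iterates of $h$. The symmetric statement follows by applying the same argument to $h^{-1}$, whose attracting and repelling fixed points are swapped. The main obstacle is establishing biproximality together with the supporting-hyperplane identity $H_{h^-} \cap \cl\Om = \{h^-\}$; both steps convert the topological hypothesis ``proper extremal'' into usable algebraic and geometric statements and rely essentially on Theorem~\ref{thm:ben3mfldgeom}.
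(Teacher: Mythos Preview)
Your approach is correct and essentially the same as the paper's: both establish that $h$ is biproximal and then deduce north--south dynamics on $\cl\Om$. The paper's proof is much terser---it simply asserts biproximality from the two-fixed-point hypothesis and says ``the result follows since $h$ preserves $\bd\Om$''---whereas you supply the details it omits, notably the identification of the repelling invariant hyperplane $H^-$ with the unique supporting hyperplane $H_{h^-}$ and the verification $H_{h^-}\cap\cl\Om=\{h^-\}$ via Theorem~\ref{thm:ben3mfldgeom}(f), which is exactly what guarantees the compact set $\cl\Om\smallsetminus\mathcal C O^-$ lies in the basin of attraction of $h^+$.
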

\begin{proof}
  If $h$ is a projective transformation preserving $\Om$ with only two fixed points in $\bd\Om$ (and
  none inside $\Om$ since we assume $\Gam$ is torsion-free), then $h$ is a biproximal matrix, so
  $h^+$ is an attracting eigenline in $\R^{n+1}$ and $h^-$ is a repelling eigenline. The result
  follows since $h$ preserves $\bd\Om$. 
\end{proof}

\begin{lemma}
  Suppose $h,g$ are hyperbolic projective transformations preserving $\Om$ such that $g^+\neq h^+$.
  Then there exist neighborhoods $V_g,V_h$ of $g^+,h^+$ such that $\mathcal C \cl{V_g}\cap \mathcal
  C \cl{V_h}=\varnothing$ and there is no properly embedded triangle which intersects both $\mathcal
  C \cl{V_g}$ and $\mathcal C \cl{V_h}$. 	
  \label{lem:ballmanstyle}
\end{lemma}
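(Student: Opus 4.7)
The plan is to proceed by contradiction, leveraging that by the paper's definition of hyperbolic, the fixed points $g^+$ and $h^+$ are distinct proper extremal points of $\bd\Om$. By Benoist's dichotomy (cited from \cite{Ben4} just before the standing assumptions), proper extremal points are characterized precisely as points lying in no vertex and no edge of any $\tri \in \mathcal T$.

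For the disjointness of convex hulls, I would fix an affine chart and take $V_g, V_h$ to be the intersections of $\cl\Om$ with open Euclidean balls around $g^+$ and $h^+$ of radius less than $\tfrac13|g^+ - h^+|$. Since the convex hull of a subset of a Euclidean ball stays inside that ball, $\mathcal C \cl{V_g}$ and $\mathcal C \cl{V_h}$ are disjoint.

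For the triangle condition, suppose that arbitrarily small such neighborhoods fail. Then there exist shrinking sequences $V_n^g \to \{g^+\}$, $V_n^h \to \{h^+\}$, triangles $\tri_n \in \mathcal T$, and points $p_n \in \cl{\tri_n} \cap \mathcal C \cl{V_n^g}$ and $q_n \in \cl{\tri_n} \cap \mathcal C \cl{V_n^h}$ with $p_n \to g^+$, $q_n \to h^+$ in the affine chart. If the $\tri_n$ are eventually equal to some fixed $\tri$, its closure contains $g^+$; since $g^+ \in \bd\Om$ and $\tri\subset\Om$ force $g^+ \in \bd\tri$, this violates the characterization of proper extremal points. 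Otherwise, by Theorem \ref{thm:ben3mfldgeom}(b,d), a subsequence gives distinct $\tri_n$ all lying in a single $\Gam$-orbit.

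The core argument then uses the unique supporting hyperplane $H$ to $\Om$ at $g^+$ (existence by propriety), and extremality of $g^+$ forces $H \cap \cl\Om = \{g^+\}$. Let $\ell$ be the affine function vanishing on $H$ and nonnegative on $\cl\Om$. Write $p_n = \sum_{i=1}^3 a_n^{(i)} u_n^{(i)}$ as a convex combination of the three vertices of $\cl{\tri_n}$ in $\bd\Om$. Then $\ell(p_n) = \sum a_n^{(i)} \ell(u_n^{(i)}) \to 0$ with each summand nonnegative. By pigeonhole, pass to a subsequence on which some index $i_*$ satisfies $a_n^{(i_*)} \geq \tfrac13$, so $\ell(u_n^{(i_*)}) \leq 3\ell(p_n) \to 0$ and hence $u_n^{(i_*)} \to g^+$. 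The same reasoning at $h^+$ produces an index $j_*$ and vertex $u_n^{(j_*)} \to h^+$ along a further subsequence; and $i_* \neq j_*$ because $g^+ \neq h^+$. The edges $[u_n^{(i_*)}, u_n^{(j_*)}] \subset \bd\tri_n \subset \bd\Om$ then converge in Hausdorff metric to $[g^+, h^+] \subset \bd\Om$. By Theorem \ref{thm:ben3mfldgeom}(f), the open segment $(g^+, h^+)$ lies in an edge of some triangle $\tri' \in \mathcal T$, forcing $g^+$ to be either a vertex of $\tri'$ (violating propriety) or an interior point of an edge (violating extremality), a contradiction. The main obstacle is the pigeonhole-plus-hyperplane step: managing the compatible subsequence extractions at $g^+$ and $h^+$ and confirming that the selected vertex indices $i_*, j_*$ must end up distinct.
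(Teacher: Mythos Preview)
Your argument is correct, and it takes a genuinely different route from the paper's. The paper shrinks the neighborhoods dynamically via $g^nV_g$ and $h^nV_h$, then invokes Benoist's result that the family $\mathcal T$ of properly embedded triangles is closed in $\Om$ (\cite[Proposition 3.2]{Ben4}) to produce a limiting triangle $\tri$ with $g^+,h^+\in\cl\tri$, contradicting the proper extremal property. Your approach avoids both the dynamics and the closedness of $\mathcal T$: you instead use the barycentric decomposition of $p_n,q_n$ in terms of the vertices of $\tri_n$, paired with the affine functional cutting out the unique supporting hyperplane, to force individual \emph{vertices} of the $\tri_n$ to converge to $g^+$ and $h^+$. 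The contradiction then comes from Theorem~\ref{thm:ben3mfldgeom}(f) rather than from closedness of $\mathcal T$. This is more elementary in its inputs, at the cost of a slightly longer combinatorial extraction.

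Two small remarks. First, your case split in steps 3--4 is unnecessary: the supporting-hyperplane argument already handles the case where the $\tri_n$ are eventually constant (a fixed vertex would equal $g^+$), and the ``single $\Gam$-orbit'' conclusion from (b,d) is never used downstream. Second, the claim $H\cap\cl\Om=\{g^+\}$ is correct but needs both hypotheses, not just extremality: since $g^+$ is proper, the minimal face of $\cl\Om$ containing $g^+$ is exactly $H\cap\cl\Om$, and extremality then forces this face to be the singleton. Your worry about the ``main obstacle'' is not one: once you extract for $i_*$ and then pass to a further subsequence for $j_*$, the convergence $u_n^{(i_*)}\to g^+$ persists, and $i_*=j_*$ would force $g^+=h^+$.
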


\begin{proof}
  Since $g,h$ are hyperbolic, $g^+,h^+$ are smooth extremal points.  There are disjoint open
  neighborhoods $V_g,V_h$ around $g^+, h^+$ respectively in $\partial \Omega$ whose closures are
  also disjoint, and for which $g^-\not\in \overline{V}_g$ and $h^-\not\in\overline{V}_h$. 
  If the lemma was false, by convexity of $\mathcal C g^nV_g, \mathcal C h^nV_h$, there would exist
  a sequence of properly embedded triangles $\tri_n$ such that $\cl{g^{n}V_g}\cap\bd\tri_n\neq
  \varnothing$ and $ \cl{h^n V_h}\cap \bd\tri_n\neq \varnothing$ for all $n$.  Since the collection
  of properly embedded triangles is closed in $\Om$ \cite[Proposition 3.2]{Ben4}, the $\tri_n$
  accumulate on some $\tri$ properly embedded in $\Om$.  Because $g,h$ are hyperbolic,
  $\cap_{n=1}^\infty\cl{g^n V_g}=\{g^+\}$ and $\cap_{n=1}^\infty \cl{h^n V_h}=\{h^+\}$.  Then $(g^+
  h^+)\subset \cl{\tri}$ which contradicts the smooth extremal property for fixed points of
  hyperbolic isometries. 
\end{proof}

\begin{proof}[Proof of Proposition \ref{prop:geometricprop}]
  Applying Lemma \ref{lem:ballmanstyle}, there are pairwise disjoint neighborhoods $V_h^\pm,V_g^\pm$
  of $h^\pm, g^\pm$ respectively such that no properly embedded triangle intersects any pair of
  convex hulls of these neighborhoods in $\cl{\Om}$. In particular, this means for $V_i,V_j\in
  \{V_h^\pm,V_g^\pm\}$ with $V_i\neq V_j$, for any $x\in \mathcal C V_i$ and $y\in \mathcal C V_j$,
  the projective line $(xy)$ is contained in $\Om$ and is not contained in any single properly
  embedded triangle. 

  By Lemma \ref{fct:hyperbolicisoms}, there exists an $N_1$ such that $h^{-n}(\cl{\Om}\smallsetminus
  \mathcal C V_h^+)\subset V_h^-$ for all $n\geq N_1$.  Moreover, there exists an $N_2$ such that
  $g^{-n} (\cl{\Om}\smallsetminus \mathcal C V_g^+)\subset V_g^-$, implying 
  \[
    g^{-n}(\mathcal C V_h^+)\subset g^{-n} (\cl{\Om}\smallsetminus \mathcal C V_g^+)\subset V_g^-
    \subset \cl{\Om}\smallsetminus \mathcal C V_h^+.
  \]
  Then for all $y\in \Om$ and all $n\geq \max\{N_1,N_2\}$, either $h^{-n}y\in \mathcal C V_h^-$ or
  $h^{-n}g^{-n}y\in \mathcal C V_h^-$. Let $M = \max\{ N_1,N_2\}$. 

  Next, we claim that for all $r>0$, for $\gam\in \{ h^{-M}, h^{-M}g^{-M}\}$ and $R=d_\Om(x,\gam
  x)$, if $\gam y\in \mathcal C V_h^-$ then 
  \[
    \gam^{-1}V_h^+\subset \gam^{-1} \mathcal O_r(\gam y,x) \subset \mathcal O_{r+R}(y,x)
  \]
  which completes the proof of the lemma. Note first that the rightmost inclusion is true for all
  $\gam\in\Gam$:  if $p\in B_\Om(x,r)$, then $d_\Om(\gam^{-1} p,x)\leq d_\Om(\gam^{-1} p,\gam^{-1}
  x)+d_\Om(x,\gam^{-1} x) \leq r + R$. So if a projective  ray $\xi$ with $\xi(0)=\gam y$ intersects
  $B_\Om(x,r)$, then $\gam^{-1} \xi$ is a projective ray with $\gam^{-1}\xi(0)=y$ which intersects
  $B_\Om(x,r+R)$. 

  For the leftmost inclusion, we show that $V_h^+ \subset \mathcal O_r(\gam y,x)$ for sufficiently
  large $r$. First, for any $\eta\in \cl{V_h^+}$ the projective ray $(\gam y\ \eta)$ is contained in
  $\Om$ but not any properly embedded triangle by choice of $V_h^-,V_h^+$ (Lemma
  \ref{fct:hyperbolicisoms}). Then take $r \geq \max_{\eta\in\cl{V_h^+}} d_\Om(x,(\gam y\ \eta))$
  and the leftmost containment is satisfied. 
\end{proof}

\subsubsection{The Shadow Lemma}

\ifextended
\begin{lemma}
  Let $\mu$ be a nontrivial Busemann density on $\bdv\Om$. Then $\supp\mu_x=\bdv\Om$.
  \label{lem:support}
\end{lemma}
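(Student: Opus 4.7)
The plan is to show that $\supp\mu_x$ is a nonempty, closed, $\Gam$-invariant subset of $\bdv\Om$, and then invoke the minimality of the $\Gam$-action (Lemma \ref{lem:minimal}) to conclude $\supp\mu_x = \bdv\Om$. Nonemptiness is immediate from nontriviality of $\mu$, and closedness holds by definition of support. So the main content is $\Gam$-invariance of the support.

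For $\Gam$-invariance, I would combine the two defining axioms of a Busemann density. Quasi-$\Gam$-invariance yields $\gam_*\mu_x = \mu_{\gam x}$, and therefore $\supp\mu_{\gam x} = \gam\cdot\supp\mu_x$ for every $\gam\in\Gam$. The transformation rule then lets us compare $\mu_x$ and $\mu_{\gam x}$ directly: on proper extremal points, the Radon--Nikodym derivative $e^{-\alp\beta_\xi(x,\gam x)}$ is positive and, using $|\beta_\xi(x,y)|\leq d_\Om(x,y)$ from the definition of the Busemann function, is bounded above and below by the positive constants $e^{\pm\alp d_\Om(x,\gam x)}$. Hence $\mu_x$ and $\mu_{\gam x}$ are mutually absolutely continuous on the proper extremal locus, and in particular their supports agree there.

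To pass from agreement of supports on proper extremal points to agreement of full supports in $\bdv\Om$, I would use Theorem \ref{thm:topologybasepoints}: the non-proper-extremal points in $\bdv\Om$ correspond under $\sim_\sig$ to vertices and edges of the countable family of properly embedded triangles (Theorem \ref{thm:ben3mfldgeom}(d)), hence form a countable, and therefore dense-complement, subset of $\bdv\Om$. Thus the closure of $\supp\mu_x\cap P$ (with $P$ the proper extremal set) agrees with the closure of $\supp\mu_{\gam x}\cap P$ in $\bdv\Om$, giving $\supp\mu_x = \supp\mu_{\gam x} = \gam\cdot\supp\mu_x$. Feeding this closed, nonempty, $\Gam$-invariant set into Lemma \ref{lem:minimal} then forces $\supp\mu_x = \bdv\Om$.

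The main obstacle is the apparent gap created by non-proper-extremal points, since the transformation rule is silent there and $\mu_x$ could in principle carry atoms at such points which are not directly comparable to atoms of $\mu_{\gam x}$. This is resolved by noting that these points form a countable set of empty interior in $\bdv\Om$: any open neighborhood $U$ of a putative extra point in $\supp\mu_{\gam x}\setminus\supp\mu_x$ must intersect the open dense set $P$, where the mutual absolute continuity forces the supports to agree, so such an extra point is impossible unless $U$ already meets $\supp\mu_x$. Once this bookkeeping is in place, the appeal to minimality of $\Gam$ on $\bdv\Om$ is routine.
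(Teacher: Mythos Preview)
Your core strategy---show $\supp\mu_x$ is a nonempty closed $\Gam$-invariant set and invoke minimality (Lemma~\ref{lem:minimal})---is exactly the paper's approach, just phrased in terms of the support rather than its complement. The paper argues by contrapositive: if $\mu_x(O)=0$ for some open $O$, then quasi-$\Gam$-invariance plus absolute continuity give $\mu_x(\gam O)=0$ for every $\gam$, and minimality forces $\mu_x(\bdv\Om)=0$.

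Where you diverge is in the extra bookkeeping around proper extremal points, and that part has issues. First, the set $P$ of proper extremal points is dense in $\bdv\Om$ but not open: vertices of properly embedded triangles are dense in $\bd\Om$ (Theorem~\ref{thm:ben3mfldgeom}(g)), so their images in the quotient are dense in $\bdv\Om$, and $P$ cannot be open. Second, and more seriously, your argument does not rule out atoms of $\mu_{\gam x}$ at points of $\bdv\Om\smallsetminus P$. If $\xi\in\supp\mu_{\gam x}\smallsetminus\supp\mu_x$ is such an atom, then for any open $U\ni\xi$ disjoint from $\supp\mu_x$ you have $\mu_x(U\cap P)=0$ and hence $\mu_{\gam x}(U\cap P)=0$, but $\mu_{\gam x}(U)=\mu_{\gam x}(U\smallsetminus P)>0$ is still possible. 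At this point in the paper neither Proposition~\ref{prop:nulltriangles} nor Lemma~\ref{lem:lowerbound} is available (both depend on the Shadow Lemma, which in turn uses the present lemma), so you cannot yet exclude such atoms.

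The fix is simply to drop this bookkeeping. The transformation rule asserts that the Radon--Nikodym derivative $d\mu_x/d\mu_y$ exists, which already encodes that $\mu_x$ and $\mu_y$ are mutually absolutely continuous as measures on all of $\bdv\Om$; the restriction to proper extremal $\xi$ concerns only where the explicit formula $e^{-\alpha\beta_\xi(x,y)}$ is valid. This is how the paper reads it (``absolute continuity of the density''), and with that reading your argument collapses to the paper's.
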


\begin{proof}
  Suppose there exists a $\xi\in O\subset\bdv\Om$, with $O$ open, such that $\mu_x(O)=0$. By
  quasi-$\Gam$-equivariance of $\mu_x$ and absolute continuity of the density, for all
  $\gam\in\Gam$,
  \[
    \mu_x(O)=\mu_{\gam x}(\gam O) =0 \then \mu_x(\gam O) = 0.
  \]
  Since $O$ is open, $\Gam.O$ covers $\bdv\Om$ because $\Gam$ acts on $\bdv\Om$ minimally (Remark
  \ref{rem:psmeasures}).  Therefore, 
  \[
    \mu_x(\bdv\Om)\leq \sum_{\gam\in\Gam}\mu_x(\gam O) = 0 
  \]
  which proves the lemma by contrapositive. 
\end{proof}
\else
First, we need a basic lemma: 
\fi

\begin{lemma} For all $\xi\in\mathcal O_r(x,y)$, 
  \[
    d_\Om(x,y) - 2r \leq \beta_\xi^-(x,y)\leq \beta_\xi^+(x,y) \leq d_\Om(x,y).
  \]
  \label{exer:shadowlem}
\end{lemma}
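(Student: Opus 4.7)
The plan is a direct triangle inequality computation along the projective ray representation of $\xi$. Since $\xi \in \bdv\Om$ is proper extremal and the basepoint $x$ lies in $\Om_{\text{hyp}}$, by Lemma~\ref{lem:topologybasepoints} there is a unique projective ray representative for $\xi$, and by Lemma~\ref{lem:busemann} the Busemann function $\beta_\xi(x,y)$ is obtained as the limit $\lim_{t\to\infty}\beta_{\xi(t)}(x,y)$ along this representative parameterized at unit Hilbert speed with $\xi(0)=x$.

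For the upper bound, I would apply the triangle inequality to the finite cocycle:
\[
\beta_{\xi(t)}(x,y) = d_\Om(x,\xi(t)) - d_\Om(y,\xi(t)) \leq d_\Om(x,y),
\]
and pass to the limit as $t\to\infty$.

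For the lower bound, since $\xi \in \mathcal O_r(x,y)$, the projective ray representative intersects $B_\Om(y,r)$; pick $t_0 \geq 0$ with $p:=\xi(t_0)$ satisfying $d_\Om(p,y) < r$. The triangle inequality gives
\[
d_\Om(x,y) \leq d_\Om(x,p) + d_\Om(p,y) < t_0 + r,
\]
so $t_0 > d_\Om(x,y) - r$. For any $t > t_0$, unit-speed parameterization and the triangle inequality yield
\[
d_\Om(y,\xi(t)) \leq d_\Om(y,p) + d_\Om(p,\xi(t)) < r + (t - t_0),
\]
hence
\[
\beta_{\xi(t)}(x,y) = t - d_\Om(y,\xi(t)) > t_0 - r > d_\Om(x,y) - 2r.
\]
Sending $t\to\infty$ and invoking Lemma~\ref{lem:busemann} finishes the proof.

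There is no serious obstacle; the only subtlety worth flagging is that we must know the limit defining $\beta_\xi(x,y)$ exists and is realized along the projective ray representation, which is exactly guaranteed by the proper extremal hypothesis and the preceding lemmas.
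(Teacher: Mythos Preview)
Your proof is correct and follows essentially the same approach as the paper: both argue the upper bound directly from the triangle inequality, and for the lower bound both pick a point $p$ on the projective ray inside $B_\Om(y,r)$ and use the two triangle inequalities $d_\Om(x,y)\leq d_\Om(x,p)+r$ and $d_\Om(y,\xi(t))\leq r + d_\Om(p,\xi(t))$ before passing to the limit along the ray. The paper's version is phrased slightly more tersely (it works with a point $z$ on the ray and takes $z\to\xi$), but the computation is identical.
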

\begin{proof}
  The rightmost inequality is immediate from the triangle inequality.		For the leftmost
  inequality, let $z\in \Omega$ converge to $\xi$ along the projective line from $x$ to $\xi$.
  Divide the projective line $(xz)$ into two segments by its first intersection $p$ with the closed
  ball $\overline{B_\Om(y,r)}$.  Then by the triangle inequality, $d_\Om(x,y) \leq d_\Om(x,p)+r$ and
  $d_\Om(z,y)\leq d_\Om(z,p)+r$, so 
  \[
    d_\Om(x,y)-2r \leq d_\Om(x,p)+d_\Om(p,z) - d_\Om(y,z) = d_\Om(x,z)-d_\Om(y,z) = \beta_z(x,y).
  \]
  The lower bound follows. 
\end{proof}

\begin{lemma}[Shadow Lemma]
  Let $\mu$ be a 
%  conformal
%  quasi-conformal
  Busemann density of dimension $\delta>0$ on $\bdv\Om$. Then
  for every $x\in \Omega$ and all suffiently large $r$, there exists a
  $C>0$ such that for all $\gam\in\Gam$,
  \[
    \frac1C e^{-\delta d_\Om(x,\gam x)}\leq \mu_x\big(\mathcal O_r(x,\gam
    x)\big)\leq C e^{-\delta d_\Om(x,\gam x)}.
  \]
  \label{lem:shadowlem}
\end{lemma}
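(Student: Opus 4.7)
The plan is to follow Sullivan's standard strategy, using the transformation rule to reduce both bounds to estimating $\mu_{\gam x}(\mathcal O_r(x, \gam x))$, and then bounding the latter using Proposition \ref{prop:geometricprop} for the lower bound and total mass for the upper bound. I would begin by writing
\[
\mu_x(\mathcal O_r(x,\gam x)) \;=\; \int_{\mathcal O_r(x,\gam x)} e^{-\delta \beta_\xi(x,\gam x)}\, d\mu_{\gam x}(\xi),
\]
which is just the transformation rule for a Busemann density (valid on proper extremal $\xi$, which we will want to argue are $\mu_x$-full measure, since non-proper-extremal points lie in the countable union of boundaries of triangles together with their vertices and can be handled by a dimension/support argument). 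By Lemma \ref{exer:shadowlem}, for proper extremal $\xi\in\mathcal O_r(x,\gam x)$ we have $d_\Om(x,\gam x)-2r\le \beta_\xi(x,\gam x)\le d_\Om(x,\gam x)$, so
\[
e^{-\delta d_\Om(x,\gam x)} \mu_{\gam x}(\mathcal O_r(x,\gam x))
\;\le\; \mu_x(\mathcal O_r(x,\gam x)) \;\le\; e^{2\delta r}e^{-\delta d_\Om(x,\gam x)}\mu_{\gam x}(\mathcal O_r(x,\gam x)).
\]

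For the upper bound I would simply estimate $\mu_{\gam x}(\mathcal O_r(x,\gam x))\le \mu_{\gam x}(\bdv\Om) = \mu_x(\bdv\Om)$, where the last equality is quasi-$\Gam$-invariance applied to $\gam^{-1}$. Since $\mu_x$ is a finite measure, this yields $C_1 := e^{2\delta r}\mu_x(\bdv\Om)$ as the upper constant. For the lower bound, the key point is a uniform positive lower bound on $\mu_{\gam x}(\mathcal O_r(x,\gam x))$ independent of $\gam$. By quasi-$\Gam$-invariance, $\mu_{\gam x}(\mathcal O_r(x,\gam x)) = \mu_x(\gam^{-1}\mathcal O_r(x,\gam x))$, and because $x\in\Om_{\text{hyp}}$ implies $\gam^{-1}x\in\Om_{\text{hyp}}$ ($\Gam$ permutes the triangles), Theorem \ref{thm:mainthm2} identifies $\gam^{-1}\mathcal O_r(x,\gam x)$ with the shadow $\mathcal O_r(\gam^{-1}x, x)$ in $\bdv\Om$. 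I now apply Proposition \ref{prop:geometricprop} with $y = \gam^{-1}x$: choose any two noncommuting hyperbolic isometries $g,h$ (which exist by Proposition \ref{prop:toppaperhyperbolic}), a small neighborhood $O$ of $h^+$, and take $r$ at least the threshold $R$ supplied by the proposition. Then for every $\gam\in\Gam$, $\mathcal O_r(\gam^{-1}x, x)$ contains either $h^M O$ or $g^M h^M O$. Since $\mu_x$ has full support on $\bdv\Om$ (by minimality of the $\Gam$-action from Lemma \ref{lem:minimal} combined with quasi-invariance, as in Lemma \ref{lem:support}), both $\mu_x(h^M O)$ and $\mu_x(g^M h^M O)$ are strictly positive; setting $c$ to be their minimum yields $\mu_{\gam x}(\mathcal O_r(x,\gam x))\ge c>0$ uniformly, and the lower bound $\mu_x(\mathcal O_r(x,\gam x))\ge c\,e^{-\delta d_\Om(x,\gam x)}$ follows.

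The main obstacle is the lower bound, specifically the step where one wants to assert a uniform positive measure on the shadow from arbitrary observation points $\gam^{-1}x$. In the classical (strictly convex, $\CAT(-1)$) setting one uses a compactness/contradiction argument against Alexandrov angles or visibility, but here both are unavailable. This is exactly why Proposition \ref{prop:geometricprop} is formulated the way it is: it converts the required uniformity into a statement about fixed neighborhoods of two hyperbolic fixed points, which noncommutativity and the biproximal north–south dynamics (Lemma \ref{fct:hyperbolicisoms}, Lemma \ref{lem:ballmanstyle}) force into \emph{every} shadow of $B_\Om(x,r)$. A secondary technical point worth care is confirming that the transformation rule, stated only for proper extremal $\xi$, suffices in the integral above—this reduces to showing the non-proper-extremal set is $\mu_x$-null, which follows because vertices form a countable set and edges of triangles, being contained in properly embedded flats whose images in $M$ form a finite disjoint family (Theorem \ref{thm:ben3mfldgeom}(e)), can be handled by a standard $\Gam$-invariance and support argument.
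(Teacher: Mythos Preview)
Your argument is essentially the paper's proof (which it attributes to Roblin): use quasi-$\Gam$-invariance and the transformation rule to rewrite $\mu_x(\mathcal O_r(x,\gam x))$ as an integral over $\mathcal O_r(\gam^{-1}x,x)$ with density controlled by Lemma~\ref{exer:shadowlem}, take the upper bound from the total mass $\|\mu_x\|$, and get the lower bound from Proposition~\ref{prop:geometricprop} together with full support of $\mu_x$. The only cosmetic difference is that the paper applies quasi-invariance first and then the transformation rule (integrating against $d\mu_x$ over $\mathcal O_r(\gam^{-1}x,x)$), whereas you apply the transformation rule first (integrating against $d\mu_{\gam x}$ over $\mathcal O_r(x,\gam x)$); these are equivalent.

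One caution on your ``secondary technical point'': your proposed justification that non-proper-extremal points are $\mu_x$-null via a direct $\Gam$-invariance/support argument does not work as stated. Countability handles the vertices only once you know $\mu_x$ has no atoms (Lemma~\ref{lem:lowerbound}), and nullity of the triangle edges is Proposition~\ref{prop:nulltriangles}; both of these are proved \emph{using} the Shadow Lemma, so invoking them here would be circular. The paper's own proof of the Shadow Lemma simply applies the transformation rule as written without pausing on this issue, so your argument is in any case no less complete than the paper's.
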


\begin{proof}
  We follow the elegant proof of Roblin \cite{roblin}.  Since $\gam$ is an
  isometry and by quasi-$\Gamma$-invariance, 
  \begin{align}
    \label{eqn:shadowlem1}
    \mu_x(\mathcal O_r(x,\gam x))  = \mu_x(\gam \mathcal O_r(\gam^{-1} x,x)) = & \ \mu_{\gam^{-1}
  x}(\mathcal O_r(\gam^{-1} x,x)).
\end{align}
By the transformation rule (Definition \ref{def:busemanndensity}),
\begin{multline} \label{eqn:shadowlem2}
  \int_{\mathcal O_r(\gam^{-1} x,x)} e^{-\delta
    \beta^+_\xi(\gam^{-1} x,x)} d\mu_x(\xi)\leq\mu_{\gam^{-1} x}(\mathcal
    O_r(\gam^{-1} x,x)) \\  \leq \int_{\mathcal O_r(\gam^{-1} x,x)}
    e^{-\delta \beta^-_\xi(\gam^{-1} x,x)} d\mu_x(\xi).
  \end{multline}

  Combining Equations \eqref{eqn:shadowlem1} and \eqref{eqn:shadowlem2} with Lemma
  \ref{exer:shadowlem}, 
  \begin{multline}
    \int_{\mathcal O_r(\gam^{-1} x,x)} e^{-\delta d_\Om(\gam^{-1}
  	x,x)}d\mu_x(\xi) \leq
    \mu_x(\mathcal O_r(x,\gamma x)) \\ \leq  \int_{\mathcal O_r(\gam^{-1}
  	x,x)} e^{-\delta
      (d_\Om(\gam^{-1} x,x)-2r)} d\mu_x(\xi),
    \end{multline}
    so, letting $\|\mu_x\|:=\mu_x(\partial\Om)<\infty$,
    \begin{equation}
      e^{-\delta d_\Om(\gam^{-1} x,x)}\mu_x(\mathcal O_r(\gam^{-1} x,x))
      \leq \mu_x(\mathcal O_r(x,\gam x))
      \leq e^{-\delta d_\Om(\gam^{-1} x,x)}e^{2\delta r} \|\mu_x\|.
      \label{eqn:shadowlem}
    \end{equation}
    The rightmost inequality of Equation \eqref{eqn:shadowlem} gives us the rightmost inequality of
    the lemma immediately.  By Proposition \ref{prop:toppaperhyperbolic} there exist two
    noncommuting hyperbolic isometries $g,h$. Then apply Proposition \ref{prop:geometricprop} to
    obtain open sets $O_1=h^MO, O_2=g^Mh^MO \subset\bdv\Om$ such that for all $r$ sufficiently large
    and all $\gam\in\Gam$, either $O_1\subset \mathcal O_r(\gam^{-1}x,x)$ or $O_2\subset \mathcal
    O_r(\gam^{-1}x,x)$.  The $\mu_x$ have full support (Remark \ref{rem:psmeasures}) so we may take
    $0<\frac1C<\min\{\mu_x(O_i)\}$ to complete the proof. 
  \end{proof}

  \ifextended
  \begin{corollary}[of Lemma \ref{lem:shadowlem}, \cite{roblin}]
    For $\Gam<\PSL(4,\R)$ acting discretely and cocompactly on a Benoist 3-manifold $\Om$, with
    critical exponent $\delta_\Gam$,
    \begin{enumerate}[(a)]
      \item If there exists a Busemann density of dimension $\delta>0$, then $\delta\geq \delta_\Gam$.
      \item For each $x\in \Om$, there exists a $C$ such that $N_\Gam(x,r)\leq C e^{\delta_\Gam r}$.
    \end{enumerate}

    \label{cor:shadowlem}
  \end{corollary}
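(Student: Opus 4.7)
The plan is to prove both parts simultaneously using the Shadow Lemma and a bounded-multiplicity argument, following the standard Patterson--Sullivan strategy. Fix $x \in \Om_{\text{hyp}}$ and let $r$ be large enough for Lemma \ref{lem:shadowlem} to apply. Partition $\Gam$ into annular shells $C(n) := \{\gam \in \Gam : n \leq d_\Om(x, \gam x) < n+1\}$, so that $N_\Gam(x, t) = \sum_{n \leq t} |C(n)|$.

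The central geometric step is a uniform multiplicity bound: there exists $K$, independent of $n$, such that each $\xi \in \bdv\Om$ lies in at most $K$ shadows $\mathcal O_r(x, \gam x)$ with $\gam \in C(n)$. To see this, suppose $\xi \in \mathcal O_r(x,\gam_1 x) \cap \mathcal O_r(x,\gam_2 x)$ for $\gam_i \in C(n)$. Then representatives of $\xi$ from $x$ pass within $d_\Om$-distance $r$ of $\gam_1 x$ and $\gam_2 x$; since $d_\Om(x,\gam_i x) \in [n, n+1]$ and all representatives of $\xi$ are mutually bounded (and unique when $\xi$ is proper extremal, by Lemma \ref{lem:topologybasepoints}), the translates $\gam_1 x$ and $\gam_2 x$ lie in a common $d_\Om$-ball of uniformly bounded radius, say $4r+2+c$. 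Proper discontinuity and cocompactness of $\Gam$ then bound the number of such translates by a universal constant $K$.

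Combining the multiplicity bound with the lower estimate in Lemma \ref{lem:shadowlem} gives
\[
  \|\mu_x\| \;\geq\; \frac{1}{K}\sum_{\gam \in C(n)} \mu_x(\mathcal O_r(x, \gam x)) \;\geq\; \frac{|C(n)|}{KC}\, e^{-\delta(n+1)},
\]
so $|C(n)| \leq C' e^{\delta n}$ and hence $N_\Gam(x, r) \leq C'' e^{\delta r}$. Part (a) follows directly, since $\delta_\Gam$ equals the exponential growth rate of $N_\Gam$ and we have shown this rate is at most $\delta$. Part (b) is obtained by applying the same argument to the Patterson--Sullivan density of dimension $\delta_\Gam$ whose existence is guaranteed by Proposition \ref{prop:existenceconfdensity}. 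For $x \notin \Om_{\text{hyp}}$, the triangle inequality $d_\Om(x,\gam x) \geq d_\Om(x_0, \gam x_0) - 2 d_\Om(x, x_0)$ transfers the bound from a reference basepoint $x_0 \in \Om_{\text{hyp}}$ at the cost of adjusting the constant.

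The main obstacle is the multiplicity bound: in $\CAT(-1)$ or rank one settings this is standard from thin triangles and visual metric comparability, whereas here geodesics are non-unique and the geometry is only Finsler. The argument must lean on the rigidity of the projective-ray representatives (Lemma \ref{lem:topologybasepoints}) and on discreteness of $\Gam$, rather than on convexity of the distance function. The edge case of $\xi$ that is not proper extremal contributes a countable union of triangle edges, whose geometry is controlled using Benoist's Theorem \ref{thm:ben3mfldgeom}, so the slack can be absorbed into $K$.
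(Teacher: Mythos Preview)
The paper does not prove this corollary; it appears only in the extended version, stated as a consequence of the Shadow Lemma and attributed to Roblin. Your strategy---annular shells, a bounded-multiplicity estimate, and the lower bound in Lemma~\ref{lem:shadowlem}---is exactly the standard one, and the deduction of (a) and (b) from the shell estimate $|C(n)|\leq C' e^{\delta n}$ is correct.

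The multiplicity bound is the step that genuinely needs more than you have written. For a class $[\xi]$ corresponding to an open edge $\sigma\subset\bd\tri$, the representatives include every projective ray $(xp)$ with $p\in\sigma$, and the mutual distance between two such rays at time $n$ is governed by the cross-ratio of their endpoints on $\sigma$, which is unbounded as one endpoint approaches a vertex. Hence for such $[\xi]$ the number of $\gam\in C(n)$ whose shadow contains $[\xi]$ can grow with $n$ (roughly linearly, by the same mechanism as Lemma~\ref{lem:trianglegrowth}), and this cannot be absorbed into a uniform constant $K$. Your final sentence gestures at Theorem~\ref{thm:ben3mfldgeom} but does not close this gap. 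The clean repair is measure-theoretic rather than geometric: Proposition~\ref{prop:nulltriangles}, whose proof uses only the Shadow Lemma and Lemma~\ref{lem:trianglegrowth} and therefore introduces no circularity, gives $\mu_x(\bdv\tri)=0$ for every properly embedded triangle. Thus the multiplicity function is bounded by $K$ on a set of full $\mu_x$-measure, and the inequality
\[
\sum_{\gam\in C(n)}\mu_x\big(\mathcal O_r(x,\gam x)\big)\;\leq\; K\,\|\mu_x\|
\]
follows, completing the argument.
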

  \fi

  \subsubsection{Boundaries of flats are null sets}

  Let $S_\tri(x,r):=\{y\in\tri : d_\Om(x,y)=r\}$ denote the sphere of Hilbert radius $r$ about $x$
  restricted to a properly embedded triangle, $\tri$. 
  Similarly, $B_\tri(x,r)$ is the open ball of Hilbert radius $r$ about $x$ restricted to the triangle
  $\tri$. For a properly embedded triangle $\tri$ in $\Om$, let $\Stab_\Gam(\tri)=\{\gam\in \Gam \mid
  \gam\tri=\tri\}$. 

  \begin{lemma}
    Pick a tiling of a properly embedded triangle $\tri$ by
    $\Stab_\Gam(\tri)$ such that $p$ is in the interior of a fundamental
    domain in the tiling.  Choose $R$ so that the open $B_\tri(p,R)$ covers
    the compact fundamental domain containing $p$.  If $N_r$ denotes the
    minimal number of $\gam.B_\tri(p,R)$ which cover $S_\tri(p,r)$, where
    $\gam\in\Stab_\Gam(\tri)$, then  $N_r$ is quasi-linear in $r$. 
    \label{lem:trianglegrowth}
  \end{lemma}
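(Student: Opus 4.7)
The plan is to transport the counting problem to a lattice point count in a two-dimensional normed plane. By de la Harpe \cite{delaHarpe}, the properly embedded triangle $\tri$ with the restricted Hilbert metric is isometric to $(\R^2, \|\cdot\|)$, where $\|\cdot\|$ is the hexagonal norm whose unit ball is a regular hexagon. Under this identification, Benoist's Theorem \ref{thm:ben3mfldgeom}(c) provides an index-two subgroup $\Lam \leq \Stab_\Gam(\tri)$ isomorphic to $\Z^2$. Since $\Lam$ is abelian, torsion-free, and acts cocompactly by isometries on the normed plane, $\Lam$ acts by translations: any isometry commuting with a rank-two lattice of translations is itself a translation. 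Because $\Lam\subseteq\Stab_\Gam(\tri)$, any bound on the number of $\Lam$-translates needed to cover $S_\tri(p,r)$ yields the same upper bound for $N_r$, while any lower bound on covers by arbitrary $\Stab_\Gam(\tri)$-translates is independent of whether the translates lie in $\Lam$. So I may work throughout with $\Lam$.

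For the upper bound, any $\gam\in\Lam$ with $\gam B_\tri(p,R)\cap S_\tri(p,r)\neq \varnothing$ satisfies $d_\Om(p,\gam p)\in [r-R,r+R]$, so $N_r$ is bounded above by the number of $\Lam$-orbit points of $p$ lying in the Hilbert annulus $A_r = B_\tri(p,r+R)\smallsetminus B_\tri(p,r-R)$. By cocompactness of the translation action, this count is comparable to the Lebesgue area $|A_r|$ in the chart realizing the identification, with multiplicative constants depending only on a fixed fundamental domain. Since $B_\tri(p,s)$ is the hexagon homothetic to the unit hexagon by factor $s$, we have $|B_\tri(p,s)| = c s^2$, and hence $|A_r| = c[(r+R)^2-(r-R)^2] = 4cRr$, which is linear in $r$.

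For the lower bound, the sphere $S_\tri(p,r)$ is the boundary of a hexagon homothetic to $S_\tri(p,1)$ by scale $r$, so its Euclidean arclength is proportional to $r$. Every translate $\gam B_\tri(p,R)$ is a congruent copy of the fixed hexagon $B_\tri(p,R)$ and therefore has Euclidean diameter at most some constant $C=C(R)$. Because $S_\tri(p,r)$ is a convex piecewise-linear curve, its intersection with any set of Euclidean diameter at most $C$ has arclength bounded by a constant $c'=c'(R)$. Hence any cover of $S_\tri(p,r)$ by $\Stab_\Gam(\tri)$-translates of $B_\tri(p,R)$ must use at least $c'' r$ elements for some $c''>0$, yielding the matching linear lower bound.

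The main obstacle to watch for is the conversion between Hilbert metric statements and Euclidean statements inside $\tri$; this is made painless here because the Hilbert ball in $\tri$ is literally a scaled translate of the unit hexagon, so area scales as $s^2$, perimeter as $s$, and all comparisons are exact rather than only up to bi-Lipschitz constants. The remainder is a standard lattice point count in $\R^2$ applied to a shell of bounded thickness.
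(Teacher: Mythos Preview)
Your argument is correct and follows the paper's route: pass to de la Harpe's model identifying $(\tri,d_\Om)$ with $\R^2$ under the hexagonal norm, use that the index-two $\Z^2\leq\Stab_\Gam(\tri)$ from Theorem~\ref{thm:ben3mfldgeom}(c) acts by translations, and reduce the problem to a lattice count. The only difference is cosmetic---the paper bounds $N_r$ by directly counting crossings of $S_\tri(p,r)$ with the grid $\langle g,h\rangle\cdot\partial D$, while you use the area of the Hilbert annulus for the upper bound and the arclength of the hexagonal sphere for the lower bound; both yield the same linear estimates.
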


  \begin{proof}
    The projective triangle with the Hilbert metric is isometric to $\R^2$
    with a hexagonal norm \cite{delaHarpe}. By Benoist's Theorem
    \ref{thm:ben3mfldgeom}(c), $\Stab_\Gam(\tri)$ is isomorphic to $\Z^2$
    up to index 2. Under De la Harpe's isometry this $\Z^2$ group acts by
    translations so the growth of orbits of a fundamental domain under the
    hexagonal norm is quasi-linear. 
%    \fi
  \end{proof}

  \begin{proposition}
    The boundary of any properly embedded triangle is a null set for any
%    conformal 
%    quasi-conformal 
    Busemann density of dimension $\delta>0$. 
    \label{prop:nulltriangles}
  \end{proposition}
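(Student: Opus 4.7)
The plan is to combine the Shadow Lemma (Lemma \ref{lem:shadowlem}) with the quasi-linear sphere growth for $\tri$ given by Lemma \ref{lem:trianglegrowth}. Fix $p \in \tri$ in the interior of a fundamental domain for $\Stab_\Gam(\tri)$, and let $R$ be as in Lemma \ref{lem:trianglegrowth}. For each $n \in \N$, set
\[
A_n = \{\gam \in \Stab_\Gam(\tri) : \gam B_\tri(p,R) \cap S_\tri(p,n) \neq \varnothing\},
\]
so $|A_n| \leq N_n$ is quasi-linear in $n$. The strategy is to build a covering $\bdv\tri \subseteq \bigcup_{\gam \in A_n} \mathcal{O}_r(x, \gam p)$ for some uniform $r$ and all large $n$, and then bound $\mu_x(\bdv\tri)$ by summing the Shadow Lemma estimate over $A_n$.

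For the covering step, take $[\xi] \in \bdv\tri$ with a representative $\xi^+ \in \bd\tri$. If $\xi^+$ lies in an open side $s$ of $\tri$, pick a projective ray $\eta \subset \tri$ from $p$ converging to some point of $s$. Then $\eta(n) \in S_\tri(p,n)$ lies within $R$ of some $\gam p$ with $\gam \in A_n$, and for $n$ large, the projective line from $x$ through $\eta(n)$ extends to an endpoint inside the open side $s$, hence represents $[\xi]$ under the $\sim_\sig$-identification of Theorem \ref{thm:topologybasepoints}. This places $[\xi]$ in $\mathcal{O}_r(x,\gam p)$ for $r$ depending only on $d_\Om(x,p)$ and $R$. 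Using $d_\Om(x,\gam p) \geq n - R - d_\Om(x,p)$, the Shadow Lemma gives
\[
\mu_x(\bdv\tri) \leq \sum_{\gam \in A_n} \mu_x(\mathcal{O}_r(x,\gam p)) \leq C N_n e^{-\delta n},
\]
which tends to $0$ as $n \to \infty$ since $\delta > 0$ dominates the quasi-linear growth of $N_n$.

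The main obstacle is the vertex case of the covering inclusion. Each vertex $v$ of $\tri$ is a singleton equivalence class $\{v\}$ in $\bdv\Om$ (non-proper but extremal), and the projective line from $x$ through a point of $\tri$ generically terminates not at $v$ but at a nearby point on an adjacent open side, so the shadows $\mathcal{O}_r(x, \gam p)$ for $\gam \in \Stab_\Gam(\tri)$ miss the vertex classes. To handle this, I would argue separately for each of the three vertices using cocompactness of $\Gam$ on $\Om$: there is a sequence $g_{n,v} \in \Gam$ with $g_{n,v}x$ within Hilbert distance $r$ of the projective line from $x$ to $v$ and $d_\Om(x, g_{n,v}x) \to \infty$. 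Since that projective line is a Hilbert geodesic from $x$ representing $\{v\}$, we have $\{v\} \in \mathcal{O}_r(x, g_{n,v}x)$, and the Shadow Lemma yields $\mu_x(\{v\}) = 0$. These three additional contributions do not affect the asymptotic bound on $\mu_x(\bdv\tri)$.
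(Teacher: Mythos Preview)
Your overall strategy matches the paper's --- cover $\bdv\tri$ by shadows indexed by a $\Stab_\Gam(\tri)$-orbit, count with Lemma~\ref{lem:trianglegrowth}, and sum the Shadow Lemma estimate --- but the covering step contains a genuine error.

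The claim that ``for $n$ large, the projective line from $x$ through $\eta(n)$ extends to an endpoint inside the open side $s$'' is false. Since $x \in \Om_{\text{hyp}}$ and $\tri$ is properly embedded, $x$ does not lie in the projective plane $P$ containing $\tri$; hence the line $\cl{x\,\eta(n)}$ meets $P$ only at the single point $\eta(n) \in \tri$, and its forward endpoint in $\bd\Om$ is never in $\bd\tri \subset P$. That endpoint is therefore not $\sim_\sigma$-equivalent to any point of $s$ (by Theorem~\ref{thm:ben3mfldgeom}(b,f), the only open segment in $\bd\Om$ meeting $s$ is $s$ itself), so this ray does \emph{not} represent $[\xi]$ and the inclusion $[\xi] \in \mathcal{O}_r(x,\gam p)$ is unjustified. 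Your instinct that something goes wrong at vertices was pointing at a real issue, but the problem is already present for the open sides.

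The paper's fix is to use the canonical projective ray $\xi$ from $x$ to the \emph{given} point $\xi^+ \in \bd\tri$, rather than a ray through an interior point of $\tri$. Since $\xi$ and the ray $\eta$ from $p$ to the \emph{same} endpoint $\xi^+$ share their forward boundary point, one has $d_\Om(\xi_t,\eta_t) \leq d_\Om(x,p)$ for all $t \geq 0$. Now $\eta(r) \in S_\tri(p,r)$ lies in some $B_\Om(\gam_i x, R)$ from the chosen cover, so $\xi(r) \in B_\Om(\gam_i x, 2R)$ and $[\xi] \in \mathcal{O}_{2R}(x,\gam_i x)$. This single argument handles every $\xi^+ \in \bd\tri$, vertices included, so the separate vertex treatment becomes unnecessary. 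Note also that Lemma~\ref{lem:shadowlem} as stated applies to shadows $\mathcal{O}_r(x,\gam x)$ with $\gam \in \Gam$, not $\mathcal{O}_r(x,\gam p)$; the paper arranges this by enlarging $R$ so that $\gam B_\tri(p,R_0) \subset B_\Om(\gam x, R)$ for $\gam \in \Stab_\Gam(\tri)$.
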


  \begin{proof}
    Choose a fundamental domain $T$ for the action of $\Stab_\Gam(\tri)$ on
    a properly embedded triangle $\tri$, a point $p\in T$ and $R_0$ as in
    Lemma \ref{lem:trianglegrowth}.  Let $x\in\Omega$ be in a fundamental
    domain $D$ for the $\Gam$-action on $\Om$ such that $T\subset D$.
    Choose $R$ large enough that $D\subset B_\Om(x,R)$ and $B_\tri(p,
    R_0)\subset B_\Om(x,R)$.  Then the $\Gam. B_\Om(x,R)$ covers $\Om$, and
    the minimal number of $\gam$ such that
    $\cup_{i=1}^N\gam_{i}.B_\Om(x,R)$ covers $S_\tri(p,r)$, is bounded
    above by the $N_r$ in Lemma \ref{lem:trianglegrowth}.  For each $r$,
    choose a covering of $S_\tri(x,r)$ by $N_r$-many $\gam_i B_\Om(x,R)$
    and assume that $\gam_i\in\Stab_\Gam(\tri)$ for $i=1,\ldots, N_r$. 

    Next, we show for all large enough $r$, $\bdv\tri\subset
    \bigcup_{i=1}^{N_r} \mathcal O_{2R}(x,\gam_i x) $.  Let $r>2R$.
    Consider any projective ray $\eta$ based at $p$ such that
    $\eta^+\in\bd\tri$.  Let $\xi$ denote projective ray based at $x$ such
    that $\xi^+=\eta^+$.  Then parameterizing $\xi,\eta$ at unit speed, we
    have that $d_\Om(\xi_t,\eta_t)\leq d_\Om(x,p)\leq R$ for all $t\geq0$.
    Since $p\in{\tri}$ and $\eta^+\in \bd\tri$, then $\eta \cap S_\tri
    (p,r)\neq\varnothing$ and there exists a $\gam_i$ such that $\eta\cap
    B_\Om(\gam_ix,R)\neq\varnothing$. Let $t\geq 0$ be such that
    $d_\Om(\eta_t,\gam_ix)<R$. Then $d_\Om(\xi_t,\gam_ix)\leq
    d_\Om(\xi_t,\eta_t)+d_\Om(\eta_t,\gam_ix)\leq 2R$, and $\xi^+ \in
    \mathcal O_{2R}(x,\gam_ix)$.  Lastly, for each $i =1,\ldots, N_r$ let
    $q_i\in S_\tri(p,r)\cap B_\Om(\gam_ix,R)\neq\varnothing$. Then 
    \[
      r=d_\Om(p,q_i)\leq d_\Om(p,x)+d_\Om(x,\gam_ix)+d_\Om(\gam_ix,q_i)
      \leq d_\Om(x,\gam_i x) + 2R
    \]
    implying   $-d_\Om(x,\gam_ix) \leq 2R-r$ for all $i=1,\ldots,N_r$.  By
    Lemma \ref{lem:shadowlem},
    \begin{equation}
      \mu_x(\bdv\tri) \leq \sum_{i=1}^{N_r} \mu_x(\mathcal O_{2R}(x,\gam_i
      x))\leq \sum_{i=1}^{N_r} C e^{-\delta d_\Om(x,\gam_ix)}\leq C
      e^{\delta 2R}e^{-\delta r} N_r.  
      \label{eqn:nulltriangles}
    \end{equation}
    Given that $N_r$ is quasi-linear in $r$ by Lemma
    \ref{lem:trianglegrowth}, that $\delta>0$, and that Equation
    \ref{eqn:nulltriangles} holds for all $r$ sufficiently large, we
    conclude $\mu_x(\partial\tri)=0$.  
  \end{proof}

  Then the following corollary is immediate after Proposition
  \ref{prop:benoist_on_bndy_pts}:

  \begin{corollary}
    The set of smooth extremal points in $\partial\Omega$ is full measure
    for any 
%    quasi-conformal 
%    conformal 
    Busemann density of dimension $\delta>0$.
    \label{cor:smoothfullmeasure}
  \end{corollary}

%Moreover, we have completed the first half of the proof of Theorem
%\ref{thm:mainthm3}:
%
%\begin{corollary}
%  The Patterson-Sullivan density constructed in Proposition
%  \ref{prop:existenceconfdensity} is a conformal Busemann density of dimension
%  $\delta_\Gamma$. 
%  \label{cor:psdensity_conformal}
%\end{corollary}

%  \section{Conformal Busemann densities are unique}		
  \section{Busemann densities are unique}		
  \label{sec:psunique}

  In this section we complete the proof of Theorem \ref{thm:mainthm3}.  The
  arguments in this section follow those of Sullivan and Knieper
  \cite{sullivan79, Kn97}. We give brief proofs, mainly to point out when
  we need Corollary \ref{cor:smoothfullmeasure}. 

  \begin{lemma}[Local estimates]
    If $\{\mu_x\}$ is a 
%    conformal 
%    quasi-conformal 
    Busemann density of dimension  $\delta>0$ on
    $\partial\Omega$, then for all $x$ and all sufficiently large $r$ there
    exists a constant $b(r)$ such that for $y\in\Om$ with $d_\Om(x,y)$
    large, 
    \[
      \frac1{b(r)}e^{-\delta d_\Om(x,y)} \leq \mu_x(\mathcal O_r(x,y))\leq
      b(r) e^{-\delta d_\Om(x,y)}
    \]
    \label{lem:localestimate}
  \end{lemma}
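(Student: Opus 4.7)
The plan is to reduce the local estimate to the Shadow Lemma (Lemma \ref{lem:shadowlem}) via cocompactness. Fix a compact fundamental domain $D$ for the $\Gam$-action on $\Om$ and set $d := \diam D$. For any $y \in \Om$ there exists $\gam \in \Gam$ with $d_\Om(y, \gam x) \leq d$; I will compare $\mathcal O_r(x,y)$ with $\mathcal O_{r'}(x, \gam x)$ for appropriate $r'$ and transfer the bounds from Lemma \ref{lem:shadowlem}.

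The key observation is a pair of shadow inclusions coming from the triangle inequality: if $d_\Om(y, \gam x) \leq d$ and $r > d$, then
\[
\mathcal O_{r-d}(x, \gam x) \;\subset\; \mathcal O_r(x, y) \;\subset\; \mathcal O_{r+d}(x, \gam x),
\]
because any geodesic ray from $x$ meeting $B_\Om(\gam x, r-d)$ automatically meets $B_\Om(y, r)$, and conversely. Since $|d_\Om(x,y) - d_\Om(x, \gam x)| \leq d$, applying the Shadow Lemma at radius $r \pm d$ (both chosen large enough to be admissible in Lemma \ref{lem:shadowlem}) gives a constant $C = C(r)$ with
\[
\tfrac{1}{C}\, e^{-\delta d_\Om(x,\gam x)} \;\leq\; \mu_x\bigl(\mathcal O_{r-d}(x, \gam x)\bigr) \;\leq\; \mu_x\bigl(\mathcal O_r(x,y)\bigr) \;\leq\; \mu_x\bigl(\mathcal O_{r+d}(x,\gam x)\bigr) \;\leq\; C e^{-\delta d_\Om(x,\gam x)}.
\]
Absorbing the multiplicative factor $e^{\pm \delta d}$ into a new constant $b(r)$ then yields the desired two-sided bound in terms of $e^{-\delta d_\Om(x,y)}$.

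The main subtlety is ensuring that the Shadow Lemma's hypothesis ``$r$ sufficiently large'' is satisfied for both shifted radii $r \pm d$; this is why the statement requires $r$ large. A secondary point worth verifying is that the shadow inclusions above are insensitive to the basepoint-dependence issue of $\bdv \Om$: since $x \in \Om_{\text{hyp}}$ by hypothesis, every class in $\bdv \Om = \bdv[x] \Om$ admits a projective ray representative (Lemma \ref{lem:topologybasepoints}), so the membership $\xi \in \mathcal O_r(x, \cdot)$ is unambiguous on a full-measure subset of $\bdv \Om$ by Proposition \ref{prop:nulltriangles}. Beyond these points the argument is routine, so I expect no substantive obstacle.
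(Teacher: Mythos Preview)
Your proposal is correct and follows essentially the same approach as the paper: reduce to the Shadow Lemma by cocompactness, sandwiching $\mathcal O_r(x,y)$ between two orbit-point shadows via the triangle inequality, then absorb the discrepancy $|d_\Om(x,y)-d_\Om(x,\gam x)|\leq d$ into the constant. The paper writes the inclusions as $\mathcal O_{r/2}(x,\gam x)\subset\mathcal O_r(x,y)\subset\mathcal O_{3r/2}(x,\gam x)$ after choosing $r$ with $D\subset B_\Om(y,r/2)$, whereas you use $r\pm d$ with $d=\diam D$; these are interchangeable. Your closing remarks invoking Proposition \ref{prop:nulltriangles} are unnecessary, since the shadow inclusions are purely set-theoretic in $\bdv[x]\Om$ and require no measure-zero considerations, but this does no harm.
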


%  \ifextended
%  The precise arrangement of the bounds is to encourage the interpretation of $\mathcal O_r(x,y)$ as a
%  ball of radius approximately $e^{-d_\Om(x,y)}$ centered at the projection of $y$ to $\bdv\Om$ from
%  $x$.  This interpretation is originally due to Sullivan \cite{sullivan79}. 
%  \fi

  \begin{proof}
    Note that if $y=\gam x$, then we apply Lemma \ref{lem:shadowlem} to
    obtain the result. Else, for some $\Gam$-tiling of $\Om$ with compact
    fundamental domain $D$, choose $r$ large enough that for all $x\in D$,
    we have $D\subset B_\Om(x,\frac{r}2)$.  Choosing $D$ such that $y\in
    D$, there exists a $\gam\in\Gam$ such that $\gam x\in D\subset
    B_\Om(y,\frac{r}2)$.  By the triangle inequality, 
    \[
      \mathcal O_{\frac{r}2}(x,\gam x) \subset \mathcal O_r(x,y) \subset
      \mathcal O_{\frac{3r}2}(x,\gam x).
    \]
%    \ifextended
%    We verify the claim in this version: if $\xi\in \mathcal O_{\frac{r}2}(x,\gam x)$ then there
%    exists $p\in (x,\xi)\cap B_\Om(\gam x,r/2)$ and $d_\Om(p,y)\leq d_\Om(p,\gam x)+d_\Om(\gam x,y)
%    \leq \frac{r}2+\frac{r}2=r$.  Thus $\mathcal O_{\frac{r}2}(x,\gam x)\subset \mathcal O_r(x,y)$.
%    Similarly, if $\eta\in\mathcal O_{r}(x,y)$ then there exists $q\in(x\eta)\cap B_\Om(y,r)$ and
%    $d_\Om(q,\gam x)\leq d_\Om(q,y)+d_\Om(y,\gam x) \leq \frac{3r}2$, so we have $\mathcal
%    O_r(x,y)\subset \mathcal O_{\frac{3r}2}(x,\gam x)$. 
%    \fi

    Applying Lemma \ref{lem:shadowlem}, if $r$ is sufficiently large then
    there is a uniform constant $C$ such that 
    \begin{align*}
      \frac1C e^{-\delta d_\Om(x,\gam x)}\leq \mu_x(\mathcal
      O_{\frac{r}2}(x,\gam x)) & \leq
      \mu_x(\mathcal O_r(x,y))  \\ & \leq \mu_x(\mathcal
      O_{\frac{3r}2}(x,\gam x)) \leq C e^{-\delta d_\Om(x, \gam x)}. 
    \end{align*}
    Our final observation is that since $\gam x\in B_\Om(y,r/2)$, 
%    \ifextended
%    \[
%      -d_\Om(x,y)-\frac{r}2 \leq -d_\Om(x,\gam x) \leq - d_\Om(x,y)+\frac{r}2
%    \]
%    and we conclude 
%    \fi
    \[
      \frac1{Ce^{\delta r/2}}e^{-\delta d_\Om(x,y)} \leq \mu_x(\mathcal
      O_r(x,y))\leq C e^{\delta r/2}e^{-\delta d_\Om(x,y)}.
    \]
  \end{proof}

  It follows that 
%  quasi-conformal 
%  conformal 
  Busemann densities have no atomic part:

  \begin{corollary}
%    Quasi-conformal 
%    Conformal 
    Busemann densities of dimension $\delta>0$ on $\partial\Omega$ have no
    atoms. 
    \label{cor:lowerbound}
  \end{corollary}

  \begin{proof}	
    It suffices to check for smooth extremal points $\xi\in\partial \Omega$
    by Corollary \ref{cor:smoothfullmeasure}.  Let $y_n$ be a sequence of
    points in $\Omega$ converging to $\xi$ along a projective line. Then
    apply the local estimate lemma (Lemma \ref{lem:localestimate}), for
    fixed sufficiently large $R$, to the shadows $\mathcal O_x(y_n,R)$. The
    conclusion follows Lemma \ref{lem:shadows_converge_to_smoothext}. 
  \end{proof}

%  \color{red} need to prove this corollary under a revised version of
%  Proposition \ref{prop:shadows_generate_topology}.
%
%  actually the proof doesn't use that proposition at all. so you can remove
%  it. really what needs to be carefully checked is this lemma 3.2!!
%  \color{black}

%  For fixed $x$ and $r$, the $\mathcal O_r(x,y)$ generate the topology of
%  $\partial \Omega$ over all $y\in\Om$ (Proposition
%  \ref{prop:shadows_generate_topology}).  Thus, we have another corollary of
%  the local estimate lemma (Lemma
%  \ref{lem:localestimate}):

  \begin{corollary}
%    Quasi-
%    Conformal 
    Busemann densities of dimension $\delta>0$ are equivalent. 
    \label{cor:equivalentdensities}
  \end{corollary}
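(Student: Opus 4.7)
The plan is to use the uniform local estimates from Lemma \ref{lem:localestimate} to show that any two Busemann densities of dimension $\delta>0$ assign comparable mass to shadows, and then upgrade this shadow-wise comparability to equivalence of measures via a standard covering argument.

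First I would fix $x \in \Om_{\text{hyp}}$ and let $\{\mu_x\}$, $\{\nu_x\}$ be two Busemann densities of the same dimension $\delta>0$. Choose $r$ large enough that Lemma \ref{lem:localestimate} applies to both, yielding constants $b_\mu(r),b_\nu(r)>0$ such that, for all $y\in\Om$ with $d_\Om(x,y)$ sufficiently large,
$$\frac{1}{b_\mu(r)\,b_\nu(r)}\ \leq\ \frac{\mu_x(\mathcal O_r(x,y))}{\nu_x(\mathcal O_r(x,y))}\ \leq\ b_\mu(r)\,b_\nu(r).$$
Thus $\mu_x$ and $\nu_x$ are uniformly comparable on every large-radius shadow.

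Second, I would note that, as observed in the paragraph preceding the corollary, the shadows $\mathcal O_r(x,y)$ over $y\in\Om$ with $d_\Om(x,y)$ large form a generating family for the topology on $\bdv\Om$, and by Proposition \ref{prop:nulltriangles} the set $\bigcup_\tri \bdv\tri$ is null for every Busemann density of positive dimension. It therefore suffices to show that $\mu_x$ and $\nu_x$ have the same null sets among Borel subsets of the proper extremal locus. Given a Borel set $A$ with $\mu_x(A)=0$, outer regularity produces open sets $U_n\supset A$ with $\mu_x(U_n)\to 0$. Each $U_n$ is a countable union of shadows; extracting an essentially disjoint subcover (with bounded multiplicity coming from cocompactness of the $\Gam$-action, via quasi-$\Gam$-invariance and the Shadow Lemma) and applying the shadow-wise comparability gives $\nu_x(U_n)\leq K\,\mu_x(U_n)\to 0$. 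Hence $\nu_x(A)=0$, and by symmetry the two measures are mutually absolutely continuous.

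The main obstacle is the bounded-multiplicity step for the covering, since shadows in this non-strictly convex setting are less well-behaved than geodesic balls in a $\CAT(-1)$ space: the visual boundary fails to be Hausdorff and the Busemann cocycle is only defined on proper extremal points. I expect that cocompactness of $\Gam$ together with the uniform Shadow Lemma bounds still provides the required Vitali-type control on overlaps of shadows indexed by $\Gam$-orbits, after which the argument reduces to a formal application of outer regularity. Basepoint-independence of the equivalence then follows immediately from quasi-$\Gam$-invariance and the transformation rule, which give matching Radon--Nikodym relations $\mu_x=e^{-\delta\beta(\cdot,y)}\mu_y$ and $\nu_x=e^{-\delta\beta(\cdot,y)}\nu_y$.
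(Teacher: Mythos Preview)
Your first step---the uniform shadow comparison coming from Lemma~\ref{lem:localestimate}---is exactly what the paper uses. The divergence is in how you pass from shadow-wise comparability to equivalence of measures.

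You attempt a global covering argument: cover an open set by shadows, extract a bounded-multiplicity subcover, and sum. You correctly flag the bounded-multiplicity step as the obstacle, but you do not prove it; you only ``expect'' it to hold. In this non-strictly-convex, non-$\CAT(0)$ setting there is no off-the-shelf Vitali or Besicovitch lemma available for shadows, and cocompactness of $\Gam$ alone does not give it. As written, this is a genuine gap.

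The paper avoids this entirely by working pointwise. The key observation you are missing is that for a \emph{proper extremal} point $\xi$, shadows $\mathcal O_r(x,y_n)$ with $y_n\to\xi$ shrink down to the singleton $\{\xi\}$. Thus the family of shadows forms a differentiation basis at every proper extremal point, and the uniform bound
\[
\frac{1}{b_\mu(r)b_\nu(r)}\leq \frac{\mu_x(\mathcal O_r(x,y_n))}{\nu_x(\mathcal O_r(x,y_n))}\leq b_\mu(r)b_\nu(r)
\]
immediately yields a bounded Radon--Nikodym derivative at $\mu_x$-a.e.\ and $\nu_x$-a.e.\ point, since by Proposition~\ref{prop:nulltriangles} the proper extremal locus has full measure for both densities. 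No covering lemma or multiplicity control is needed. Replace your outer-regularity/Vitali step with this pointwise shrinking argument and the proof closes.
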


  \begin{proof}
    Let $\{\mu_x\},\{\nu_x\}$ be Busemann densities of dimension $\delta$.
    Let $\xi\in\partial\Om$ be a smooth extremal point and take a sequence
    $y_n$ of points in $\Omega$ converging to $\xi$ along a projective
    line. Then for all sufficiently large $n$, $d_\Om(x,y_n)$ is large
    enough to apply Lemma \ref{lem:localestimate} to both densities %$\mu$
    and $\nu$ and conclude:
    \[
      \frac{1}{b_\mu(r)b_\nu(r)}\leq \frac{\mu_x(\mathcal
      O_r(x,y_n))}{\nu_x(\mathcal O_r(x,y_n))} \leq b_\mu(r)b_\nu(r).
    \] 
    By Lemma \ref{lem:shadows_converge_to_smoothext}, 
    since $y_n$ converges to $\xi$ along a
    projective line and $\xi$ is smooth and extremal, the shadows
    $\mathcal O_r(x,y_n)$ form a nested decreasing sequence with
    intersection $\{\xi\}$.  Thus, since smooth extremal points form a set
    of full measure for any $\delta$-dimensional Busemann density by
    Corollary \ref{cor:smoothfullmeasure}, we conclude that $\mu_x$ and
    $\nu_x$ are equivalent. 
  \end{proof}

  \begin{proposition}
    If $\{\mu_x\}$ is a 
%    conformal 
%    quasi-conformal 
    Busemann density of dimension $\delta>0$ on
    $\partial\Omega$, then for all $x\in\Omega$, the measure $\mu_x$ is ergodic
    for the $\Gam$-action on $\partial\Omega$.
    \label{prop:ergodicdensities}
  \end{proposition}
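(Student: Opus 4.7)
The plan is to adapt the classical Sullivan density-point argument to this setting, leveraging bounded distortion of the Busemann cocycle on shadows together with the geometric Proposition \ref{prop:geometricprop} and minimality of the $\Gamma$-action (Lemma \ref{lem:minimal}). Suppose $A \subset \bdv\Om$ is $\Gam$-invariant Borel with $\mu_x(A) > 0$; I aim to conclude $\mu_x(A^c) = 0$.

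First I would establish a Lebesgue differentiation theorem for the basis of shadows: for $r$ sufficiently large, the local estimates (Lemma \ref{lem:localestimate}) together with a Vitali-type covering argument yield that for $\mu_x$-a.e.\ proper extremal $\xi$,
\[
\lim_{y \to \xi} \frac{\mu_x(A \cap \mathcal O_r(x,y))}{\mu_x(\mathcal O_r(x,y))} = \mathbf{1}_A(\xi).
\]
Since proper extremal points are full measure by Proposition \ref{prop:nulltriangles}, I fix a density point $\xi \in A$ and, by minimality, a sequence $\gam_n \in \Gam$ with $\gam_n x \to \xi$. Writing $O_n = \mathcal O_r(x, \gam_n x)$, the ratio $\mu_x(A^c \cap O_n)/\mu_x(O_n) \to 0$. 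Using $\Gam$-invariance of $A^c$, quasi-$\Gam$-invariance of $\mu_x$, the transformation rule, and the bound $|\beta_\eta(\gam_n x, x) + d_\Om(x,\gam_n x)| \leq 2r$ on $O_n$ from Lemma \ref{exer:shadowlem}, the distortion of pullback by $\gam_n^{-1}$ on the shadow is bounded by $e^{2\delta r}$, giving
\[
\frac{\mu_x(A^c \cap \gam_n^{-1} O_n)}{\mu_x(\gam_n^{-1}O_n)} \leq e^{2\delta r}\frac{\mu_x(A^c \cap O_n)}{\mu_x(O_n)} \longrightarrow 0.
\]

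Next, I invoke Proposition \ref{prop:geometricprop} with two noncommuting hyperbolic isometries $g, h$ (which exist by Proposition \ref{prop:toppaperhyperbolic}): for $r$ large, every $\mathcal O_r(y, x)$ with $y \in \Om$ contains $h^M O$ or $g^M h^M O$. Applying this to $y = \gam_n^{-1} x$ and passing to a subsequence, a fixed nonempty open set $U \in \{h^M O,\ g^M h^M O\}$ satisfies $U \subset \gam_n^{-1} O_n$ for all $n$. Since $\mu_x(\gam_n^{-1} O_n) \leq \|\mu_x\| < \infty$, the vanishing ratio above forces $\mu_x(A^c \cap U) = 0$. Finally, by minimality of $\Gam$ on $\bdv\Om$ and compactness of $\bdv\Om$, there exist $\gam_1, \ldots, \gam_N \in \Gam$ with $\bdv\Om = \bigcup_{i=1}^N \gam_i U$. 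Since $A^c$ is $\Gam$-invariant and $\mu_{\gam_i^{-1} x}$ is mutually absolutely continuous with $\mu_x$, $\mu_x(A^c \cap \gam_i U) = \mu_{\gam_i^{-1} x}(A^c \cap U) = 0$ for each $i$, whence $\mu_x(A^c) = 0$.

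The main obstacle is the density-point theorem of the first step: the visual boundary topology is delicate (not Hausdorff on the non-proper part), so some care is needed to verify that shadows form a Vitali basis for $\mu_x$. However, once restricted to the full-measure set of proper extremal points, where Theorem \ref{thm:topologybasepoints} identifies $\bdv\Om$ with a genuine metric quotient of $\bd\Om$, the local estimates make $\mu_x$ Ahlfors-like on this basis and a standard Besicovitch--Vitali covering argument applies. The remainder is a direct transformation-rule computation combined with a compactness-plus-minimality finish.
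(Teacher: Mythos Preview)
Your argument is essentially correct but takes a genuinely different route from the paper. The paper's proof is a two-line restriction argument: given a $\Gam$-invariant Borel set $A$ with $\mu_x(A)>0$, define $\bar\mu_x(B):=\mu_x(A\cap B)$, verify (using $\Gam$-invariance of $A$) that $\{\bar\mu_x\}$ is again a Busemann density of dimension $\delta$, and invoke Corollary~\ref{cor:equivalentdensities} to conclude $\mu_x$ and $\bar\mu_x$ are equivalent, whence $\mu_x(A^c)=\bar\mu_x(A^c)=0$.

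Your Sullivan density-point approach works and is a classical alternative, but it is more laborious here. Both proofs ultimately rest on the Shadow Lemma and local estimates; the paper simply packages this through the already-proved equivalence of same-dimensional Busemann densities, while you re-derive the needed consequence by hand via differentiation, bounded distortion on shadows, Proposition~\ref{prop:geometricprop}, and minimality. Your distortion computation and the use of Proposition~\ref{prop:geometricprop} to trap a fixed open $U$ inside $\gam_n^{-1}O_n$ are correct. The one place where your proof carries real extra cost is the Lebesgue differentiation step: you correctly flag it as the main obstacle, and while the local estimates do give the doubling needed for a Vitali-type lemma on the shadow basis restricted to proper extremal points, making this precise on the non-Hausdorff $\bdv\Om$ requires more care than ``standard Besicovitch--Vitali'' suggests. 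The paper's restriction argument sidesteps this entirely. What your approach buys is independence from Corollary~\ref{cor:equivalentdensities}; what the paper's approach buys is brevity and no new covering machinery.
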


  \begin{proof}
    Let $A\subset \partial\Omega$ be a Borel, $\Gam$-invariant set with
    positive $\mu_x$-measure for all $x$, since the measures are equivalent.
    Define a new density $\bar{\mu}_x(B):=\mu_x(A\cap B)$ for all $x\in \Om$.
    Since $A$ is $\Gam$-invariant and has positive measure, it suffices to
    show that $\bar{\mu}_x$ is a Busemann density also of dimension $\delta$.
    Then $\mu_x$ is equivalent to $\bar{\mu}_x$ by Corollary
    \ref{cor:equivalentdensities}, and we conclude that $\mu_x
    (\partial\Om\smallsetminus A) = \bar{\mu}_x(\partial\Om\smallsetminus A)
    = 0$, proving ergodicity of $\mu_x$ for $\Gam$.

%    \ifextended
%    Now we verify that $\{\bar{\mu}_x\}$ is a Busemann density. First, $\bar{\mu}_x$ is
%    clearly nontrivial and finite, since $\bar{\mu}_x(\bdv\Om)=\mu_x(A)>0$. Moreover,
%    the two defining properties of a $\delta$-dimensional Busemann density are
%    satisfied:  
%    \begin{enumerate}
%      \item (quasi-$\Gam$-equivariance) For any Borel set $B\subset \bdv\Om$, 
%	\begin{align*}
%	  \bar{\mu}_{\gam x}(B) & = \mu_{\gam x}(A\cap B) \\
%	  & = \mu_x(\gam^{-1}(A\cap B))\\ & =\mu_x(A\cap \gam^{-1} B) =
%	  \bar{\mu}_x(\gam^{-1}B). 
%	\end{align*}
%	Note that for all $g\in\Gam$, $g(A\cap B) \supseteq g A\cap g B$
%	because $g$ is invertible. The reverse containment is obvious,
%	allowing us to conclude by $\Gam$-invariance of $A$ that
%	$\gam^{-1}(A\cap B)= \gam^{-1} A \cap \gam^{-1} B = A\cap
%	\gam^{-1}B$. 
%      \item (transformation rule) This is immediate by the transformation rule for
%	$\{\mu_x\}_{x\in \Om}$:
%	\begin{align*}
%	  \bar{\mu}_x(B) & = \mu_{x}(A\cap B) \\& = \int_{A\cap B}
%	  e^{-\delta \beta_\xi(x,y)} d\mu_y(\xi) \\
%	  & = \int_B e^{-\delta \beta_\xi(x,y)} I_A(\xi) d\mu_x(\xi) 
%	  = \int_B e^{-\delta \beta_\xi(x,y)} d\bar{\mu}_y(\xi).
%	\end{align*}
%	where $I_A \colon \bdv\Om\to\{0,1\}$ is the indicator function on
%	$A$. It is evident that $d\bar{\mu}_x/d\mu_x(\xi)=I_A(\xi)$.
%    \end{enumerate}
%    \else
    It is clear that $\bar{\mu}_x$ is nontrivial and finite.  Since smooth
    extremal points are full measure and the transformation rule is
    well-defined for smooth extremal points, the proof that $\bar{\mu}_x$
    satisfies the transformation rule does not differ significantly from 
    \cite[Proposition 4.15]{Kn97}, and the proof of quasi-$\Gam$-invariance
    is unchanged. 
%    \fi
  \end{proof}

%  Quasi-conformality of the density sufficed for the 
%  previous proofs, but we will need to
%  invoke conformality for the next result:

  \begin{theorem}
%    Conformal 
    Busemann densities of dimension $\delta>0$ on
    $\partial\Omega$ are unique up to a constant. 
    \label{thm:uniquedensities}
  \end{theorem}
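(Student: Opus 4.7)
The plan is to leverage the two main results proved immediately above: equivalence of Busemann densities of the same dimension (Corollary \ref{cor:equivalentdensities}) and ergodicity of Busemann densities for the $\Gam$-action on $\bdv\Om$ (Proposition \ref{prop:ergodicdensities}). Given two $\delta$-dimensional Busemann densities $\{\mu_x\}$ and $\{\nu_x\}$, I will form the Radon--Nikodym derivative, show it is both basepoint-independent and $\Gam$-invariant, and then invoke ergodicity.

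First, by Corollary \ref{cor:equivalentdensities}, for each $x \in \Om_{\text{hyp}}$ the measures $\mu_x$ and $\nu_x$ are equivalent, so the Radon--Nikodym derivative $f_x(\xi) := \tfrac{d\mu_x}{d\nu_x}(\xi)$ exists and is strictly positive $\mu_x$-almost everywhere. By Proposition \ref{prop:nulltriangles}, the set of proper extremal points is of full measure for both $\mu_x$ and $\nu_x$, so it suffices to work on this set. Applying the transformation rule for both densities, for any $x,y \in \Om_{\text{hyp}}$ and any proper extremal $\xi$,
\[
f_x(\xi) \;=\; \frac{d\mu_x}{d\mu_y}(\xi) \cdot \frac{d\mu_y}{d\nu_y}(\xi) \cdot \frac{d\nu_y}{d\nu_x}(\xi) \;=\; e^{-\delta\beta_\xi(x,y)} \cdot f_y(\xi) \cdot e^{\delta\beta_\xi(x,y)} \;=\; f_y(\xi),
\]
so $f_x$ does not depend on $x$ on the full-measure set of proper extremal points. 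Denote this common function simply by $f$.

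Next, I verify $\Gam$-invariance of $f$. For any $\gam \in \Gam$ and any Borel $A \subset \bdv\Om$, quasi-$\Gam$-invariance gives $\gam_\ast \mu_x = \mu_{\gam x}$ and $\gam_\ast \nu_x = \nu_{\gam x}$. Thus, for proper extremal $\xi$,
\[
f(\gam^{-1}\xi) \;=\; \frac{d\mu_x}{d\nu_x}(\gam^{-1}\xi) \;=\; \frac{d(\gam_\ast \mu_x)}{d(\gam_\ast \nu_x)}(\xi) \;=\; \frac{d\mu_{\gam x}}{d\nu_{\gam x}}(\xi) \;=\; f_{\gam x}(\xi) \;=\; f(\xi),
\]
using basepoint independence in the last step. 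Since the set of proper extremal points is $\Gam$-invariant (stabilizers of triangles permute vertices and edges among themselves), $f$ is $\Gam$-invariant on a set of full measure.

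Finally, by Proposition \ref{prop:ergodicdensities}, $\nu_x$ is ergodic for the $\Gam$-action on $\bdv\Om$, so any $\Gam$-invariant measurable function is constant $\nu_x$-a.e. Hence there exists $c > 0$ such that $f \equiv c$ almost everywhere, yielding $\mu_x = c\,\nu_x$ for every $x \in \Om_{\text{hyp}}$. The main (minor) subtlety is ensuring that the transformation rule can be applied despite being stated only on proper extremal points; this is handled by Proposition \ref{prop:nulltriangles}, which guarantees that the complement of this set is null for every density of positive dimension.
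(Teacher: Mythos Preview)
Your proof is correct and follows essentially the same approach as the paper: form the Radon--Nikodym derivative of two $\delta$-dimensional Busemann densities, verify it is $\Gam$-invariant on the full-measure set of proper extremal points (using Proposition~\ref{prop:nulltriangles}), and invoke ergodicity (Proposition~\ref{prop:ergodicdensities}). Your explicit intermediate step of first establishing basepoint independence of $f_x$ via the transformation rule, and then using it together with quasi-$\Gam$-invariance to obtain $\Gam$-invariance, is a clean way to organize the argument; the paper's (extended) version instead verifies $\Gam$-invariance by a single direct chain of equalities, but the content is the same.
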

  \begin{proof}
    Let $\{\mu_x\}, \{\nu_x\}$ be two Busemann densities of dimension
    $\delta$.  Since $\mu_x$ and $\nu_x$ are equivalent, it suffices to
    show that the Radon-Nikodym derivative $d\nu_x/d\mu_x$ is
    $\Gam$-invariant on the set of smooth extremal points, which are a set
    of full measure by Corollary \ref{cor:smoothfullmeasure}.  Ergodicity
    of $\mu_x$ then implies that the Radon-Nikodym derivative is constant
    $\mu_x$-almost everywhere. Since the densities have the same
    transformation rule almost everywhere, this constant does not depend on
    $x$. 
%  Since
%  $\mu_x$ and $\nu_x$ are equivalent, the measures then have to agree up to a constant. 
%    \ifextended
%    We show $\Gam$-invariance of the Radon-Nikodym derivative for two $\delta$-dimensional Busemenn
%    densities.  Let $A$ be a Borel measurable set. Then, for $\xi$ smooth extremal, since these points
%    are full measure, 
%    \begin{align*}
%      \int_A\frac{d\mu_x}{d\bar{\mu}_x}(\gam^{-1}\xi)d\bar{\mu}_x(\xi)
%      &= \int_{\gam A}\frac{d\mu_x}{d\bar{\mu}_x}(\xi)d\bar{\mu}_x(\gam\xi) & \Gam\text{ acts by homeos on }\bdv\Om\\
%      &= \int_{\gam A}\frac{d\mu_x}{d\bar{\mu}_x}(\xi)d\bar{\mu}_{\gam^{-1}x}(\xi) &\Gam\text{-quasi-invariance of } \bar{\mu}\\
%      &= \int_{\gam A} e^{-\delta\beta_\xi(\gam^{-1}x,x)}d\mu_x(\xi) & \text{transformation rule of }\bar{\mu} \\
%      &= \int_{\gam A} e^{-\delta \beta_{\gam\xi}(x,\gam x)}d\mu_x(\xi) & \text{uniqueness of }\beta\text{ over limit paths} \\
%      &= \int_A e^{-\delta \beta_\xi(x,\gam x)}d\mu_{\gam x}(\xi) & \Gam\text{-quasi-invariance of }\mu \\
%      &= \mu_x(A)   & \text{transformation rule of }\mu.
%    \end{align*}
%    By uniqueness of the Radon-Nikodym derivative for $L^1$ functions up to a null set, we conclude
%    that $d\mu_x/d\bar{\mu}_x (\gam^{-1}\xi) = d\mu_x/d\bar{\mu}_x(\xi)$ almost everywhere. 
%    \else
    Verifying that the Radon-Nikodym derivative is $\Gam$-invariant on the
    set of smooth extremal points is straightforward. % for 
%    {\em conformal} Busemann densities. 
%    \fi
  \end{proof}

  Combining %Corollary \ref{cor:psdensity_conformal}
  Proposition \ref{prop:existenceconfdensity} 
  and Theorem \ref{thm:uniquedensities} gives us
  Theorem \ref{thm:mainthm3}. 

  \subsection{Volume growth and divergence of $\Gam$}

  In this section, we see that $\Gam$ is divergent.  With all the tools is
  place, the proof does not differ from that of Knieper for rank one
  manifolds, but we include it here for completeness \cite[Theorem
  5.1]{Kn97}.

  First we prove Theorem \ref{thm:mainthm4} on the growth rate of volumes
  of spheres.  Let $\vol_{x,t}$ be a Hilbert volume
  form on $S_\Omega(x,t)$ which is $\Gamma$-equivariant, meaning
  $\vol_{\gamma x,t}=\gamma_\ast \vol_{x,t}$. We abbreviate with 
  $\vol$ when the context is
  clear.  For a definition of sphere volume, see the definiton of area of a
  smooth hypersurface in \cite[Equation 1.3]{vernicos13}. Since the
  nonsmooth points in the spheres come from nonsmooth points in the
  boundary of $\Omega$, which form a countable set in this case by Benoist
  (Theorem \ref{thm:ben3mfldgeom}), these nonsmooth points in spheres are
  measure zero and the definition can be applied. Indeed, Vernicos studies
  asymptotics of this sphere volume for any Hilbert geometry, not
  necessarily smooth ones (see \cite[Theorem 2.1]{vernicos13}).

  \begin{proof}[Proof of Theorem \ref{thm:mainthm4}]
    Let $\{\mu_x\}$ denote the Patterson-Sullivan density of dimension
    $\delta_\Gam$, existence of which we constructed in
    Proposition \ref{prop:existenceconfdensity}. 
    By previous work we have that $\delta_\Gam>0$ \cite{braytop}. 
    Let $\delta=\delta_\Gam$ throughout the proof. 
    Let $R$ be sufficiently large to
    apply Sullivan's Shadow Lemma (Lemma \ref{lem:shadowlem}) and
    consequently the local estimate in Lemma \ref{lem:localestimate}.
    Consider $r\geq 6R$. By compactness of $S_\Om(x,t)$, we can take
    $\{x_i\}_{i=1}^{N_t}$ to be a maximal $r$-separating set
    in $S_\Om(x,t)$. In particular, if $i\neq j$, then $d_\Om(x_i,x_j)\geq
    r$, implying $B_\Om(x_i,r/3)\cap B_\Om(x_j,r/3)=\varnothing$.
    Maximality implies $S_\Om(x,t)\subset \bigcup_{i=1}^{N_t}B_\Om(x_i,r)$. 

    By the local estimate (Lemma \ref{lem:localestimate}), there exists a
    $b(x)$ such that for all $r\in[2R,6R]$ and $x_i$, each of which is
    distance $t$ from $x$, we have 
    \[
      \frac1be^{-\delta t}\leq \mu_x(\mathcal O_r(x,x_i))\leq be^{-\delta t}.
    \]
    Then
    \[
      \mu_x(\bdv\Om)\leq \mu_x(\bigcup_{i=1}^{N_t}\mathcal O_r(x,x_i)) \leq
      \sum_{i=1}^{N_t}\mu_x(\mathcal O_r(x,x_i))\leq N_t b e^{-\delta t},
    \]
    and 
    \[
      \mu_x(\bdv\Om)\geq \mu_x(\bigcup_{i=1}^{N_t}\mathcal O_{r/3}(x,x_i)) =
      \sum_{i=1}^{N_t}\mu_x(\mathcal O_{r/3}(x,x_i)) \geq \frac{N_t}b e^{-\delta
      t}.
    \]
%		So the local estimate for our conformal density yields a fine approximation for the
%		growth of $N_t$ with $t$: 
%		\begin{align*}
%			 \frac{N_t}b e^{-\delta t} \leq \; &  \; \mu_x(\bdv\Om)\; \leq N_t b e^{-\delta t}, \\
%			 \frac{be^{\delta t}}{\mu_x(\bdv\Om)}\geq \; & \; N_t\; \geq \frac{e^{\delta t}}{b \mu_x(\bdv\Om)}.
%		\end{align*}
%		
    Since $\mu_x(\partial\Omega)$ is a constant depending on $x$, there is a number
    $b'(x)$ such that 
    \[
      \frac1{b'}e^{\delta t}\leq N_t \leq b' e^{\delta t}.
    \]

    By cocompactness of $\Gamma$ acting on $\Omega$, and
    $\Gam$-equivariance of the sphere volumes $\vol_{x,t}$ on
    the spheres of radius $t$ around $x$, %$\Om$, 
    there exists an $\ell(r)$ such that for all $y\in S_\Omega(x,t)$, 
		%$x\in\Om$ and $y\in S_\Om(x,t)$, 
    \[
      \frac1\ell \leq \vol_{x,t}(B_\Om(y,r)\cap S_\Om(x,t))\leq \ell .
    \]
    Then we may arrange for an $\ell'$ such that 
    \[
      \vol_{x,t}(S_\Om(x,t))\leq
      \sum_{i=1}^{N_t}\vol_{x,t}(B_\Om(x_i,r)\cap S_\Om(x,t)) \leq
      N_t \cdot \ell' \leq \ell'b'e^{\delta t} 
    \]
    and 
    \[
      \vol_{x,t}(S_\Om(x,t))\geq \sum_{i=1}^{N_t}
      \vol_{x,t}(B_\Om(x_i, r/3)\cap S_\Om(x,t))
      \geq \frac{N_t}{\ell'}\geq \frac1{\ell'b'} e^{\delta t}, 
    \]
    which concludes the proof of the theorem. 
  \end{proof}
%\else

%\begin{proof}[Proof of Theorem \ref{thm:mainthm4}]
%  Let $\{\mu_x\}$ denote the Patterson-Sullivan density of dimension $\delta_\Gam>0$, existence of
%  which we showed in Proposition \ref{prop:existenceconfdensity}.
%
%  For fixed $r$ and $t$ large, choose $\{x_i\}_{i=1}^{N_t}$ a maximal $r$-separating set in $S_\Om(x,t)$, meaning
%  a maximal cardinality set of points whose pairwise distances are at least $r$.
%  Then balls of radius $r/3$ about the $x_i$ are pairwise disjoint. By the local estimate lemma (Lemma
%  \ref{lem:localestimate}), choosing $r$ sufficiently large, for each $i=1,\ldots, N_t$ we have
%  bounds
%  \[
%    \frac1be^{-\delta_\Gam t}\leq \mu_x(\mathcal O_r(x,x_i))\leq be^{-\delta_\Gam t}.
%  \]
%  and the bounds can be refined to apply for a range of $r$-values. Since the $\{x_i\}$ are a
%  maximal $r$-separating set in the sphere, $\bigcup \mathcal
%  O_{r/3}(x,x_i)\subset \partial\Omega \subset \bigcup \mathcal O_{r}(x,x_i)$, so we find a $b'$ such
%  that  
%  \[
%    \frac1{b'}e^{\delta_\Gam t}\leq N_t \leq b' e^{\delta_\Gam t}.
%  \]
%
%  By compactness of the quotient $M$ and $\Gam$-equivariance of $\vol$ on $\Om$, there exists an $\ell(r)$ such
%  that for all $x\in\Om$ and $y\in S_\Omega(x,t)$,
%  \[
%    \frac1\ell \leq \vol(B_\Om(y,r)\cap S_\Om(x,t))\leq \ell .
%  \]
%  Then we may just as easily arrange for an $\ell'$ such that 
%  \[
%    \frac1{\ell'b'} e^{\delta_\Gam t} \leq \frac{N_t}{\ell'} \leq \vol(S_\Om(x,t)) \leq N_t \cdot
%    \ell' \leq \ell'b'e^{\delta_\Gam t}. 
%  \]
%\end{proof}

  \begin{corollary}
    Let $\Om$ be a properly convex, divisible, indecomposable Hilbert geometry of dimension three with
    dividing group $\Gam$.  Then $\Gam$ is of divergent type. 
    \label{cor:divergent}
  \end{corollary}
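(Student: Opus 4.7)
The plan is to combine the volume growth estimate of Theorem \ref{thm:mainthm4} with a standard comparison between the orbit counting function $N_\Gam(t)=\#\{\gam\in\Gam:d_\Om(x,\gam x)\leq t\}$ and $\vol B_\Om(x,t)$ via cocompactness, then use Abel summation to show the Poincar\'e series diverges at $s=\delta_\Gam$.

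First I would convert the sphere volume bound into a ball volume bound. Integrating the inequality $\frac1a e^{\delta_\Gam t}\leq \vol S_\Om(x,t)\leq a e^{\delta_\Gam t}$ of Theorem \ref{thm:mainthm4} from $0$ to $T$ (or, more precisely, using that $\vol B_\Om(x,T)=\int_0^T \vol S_\Om(x,t)\,dt$ for a Finsler manifold), one obtains constants $A>0$ such that
\[
\frac1A e^{\delta_\Gam T}\leq \vol B_\Om(x,T)\leq A\, e^{\delta_\Gam T}
\]
for all $T$ sufficiently large (the lower bound uses that $\delta_\Gam>0$, already established in the paper).

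Next, I would compare $N_\Gam(T)$ to $\vol B_\Om(x,T)$. Fix a compact fundamental domain $D$ for the $\Gam$-action with $x\in D$, and let $R=\diam D$. Since $\Gam$ acts by isometries and the translates $\{\gam D\}_{\gam\in\Gam}$ tile $\Om$, the standard packing/covering argument gives
\[
N_\Gam(T)\cdot \vol D\;\leq\; \vol B_\Om(x,T+R)\quad\text{and}\quad \vol B_\Om(x,T)\;\leq\; N_\Gam(T+R)\cdot \vol D.
\]
Combined with the volume bound above, this yields constants $B>0$ with $\tfrac1B e^{\delta_\Gam T}\leq N_\Gam(T)\leq B\, e^{\delta_\Gam T}$ for all large $T$.

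Finally, I would decompose the Poincar\'e series into integer annuli:
\[
P(x,x,\delta_\Gam)\;\geq\;\sum_{n=0}^{\infty}\bigl(N_\Gam(n+1)-N_\Gam(n)\bigr)\,e^{-\delta_\Gam(n+1)}.
\]
Abel summation converts the right-hand side, up to lower-order boundary terms, into $(1-e^{-\delta_\Gam})\sum_{n\geq 1}N_\Gam(n)\,e^{-\delta_\Gam n}$, which is bounded below by a positive constant times $\sum_n 1=\infty$ by the lower bound on $N_\Gam(n)$. Hence $P(x,x,\delta_\Gam)=\infty$, i.e.\ $\Gam$ is of divergent type. The only mild subtlety is keeping the annular sum honest despite $N_\Gam(n+1)-N_\Gam(n)$ not being directly controlled; Abel summation sidesteps this by passing to the cumulative count $N_\Gam(n)$, for which we have two-sided exponential control.
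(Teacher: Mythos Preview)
Your argument is correct, but the paper takes a cleaner and more direct route. Rather than passing through the orbit counting function, the paper integrates the Poincar\'e series over a fundamental domain $D$: for $s>\delta_\Gam$ one has, by Fubini and the change of variables $z=\gam y$,
\[
\int_D P(x,y,s)\,d\vol(y)=\sum_{\gam\in\Gam}\int_{\gam D}e^{-sd_\Om(x,z)}\,d\vol(z)=\int_\Om e^{-sd_\Om(x,z)}\,d\vol(z)=\int_0^\infty e^{-st}\vol S_\Om(x,t)\,dt.
\]
The lower bound of Theorem~\ref{thm:mainthm4} then makes the right-hand side diverge as $s\searrow\delta_\Gam$, and since convergence of $P(x,y,s)$ is independent of $x,y$ this forces $P(x,x,\delta_\Gam)=\infty$.

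Your approach works but involves three extra conversions (spheres to balls, balls to orbit counts via packing/covering, and orbit counts back to the series via Abel summation), each of which introduces constants that must be tracked. The paper's integration trick bypasses all of this by relating the Poincar\'e series directly to the sphere-volume integral in one line. What your approach does buy is an explicit two-sided exponential estimate on $N_\Gam(T)$ itself, which is of independent interest even though it is not needed for the corollary.
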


  \begin{proof}
    Let $D$ be a compact fundamental domain for the $\Gam$-action on $\Om$. Then for $s>\delta_\Gam$,
    $P(x,y,s)$ converges so we can apply Fubini's Theorem to the following integral:
    \begin{multline*}
      \int_D \sum_{\gam\in\Gam}e^{-sd_\Om(x,\gam y)}d\vol(y) = \sum_{\gam\in\Gam} \int_{\gam(D)}e^{-s
      d_\Om(x,\gam y)} d\vol(y) \\= \int_0^\infty e^{-s t}\vol(S_\Om(x,t))dt.
    \end{multline*}
    As $s$ decreases to $\delta_\Gam$, the right hand side diverges 
    \ifextended
    because by Theorem \ref{thm:mainthm4},
    \[
      \int_0^\infty e^{-\delta_\Gam t}\vol(S_\Om(x,t))dt \geq \int_0^\infty \frac1a dt 
    \] 
    which is a divergent integral. 
    \else
    by Theorem \ref{thm:mainthm4}.
    \fi
  \end{proof}

  \section{The Bowen-Margulis measure}	
  \label{sec:bowenmargulismeasure}

  In this section, we introduce the $\Gam$-invariant Bowen-Margulis measure
  on $T^1\Om$, denoted $\wt\mu_{BM}$, following the standard construction
  \cite{sullivan79, roblin,Kn97} and prove Theorem \ref{thm:mainthm1}.

  \subsection{Definition and properties}
  Let $\{\mu_x\}$ be the Patterson-Sullivan density constructed in
  Proposition \ref{prop:existenceconfdensity}, which is a 
%  conformal
  Busemann density of dimension $\delta_\Gamma>0$. 
%  (Corollary \ref{cor:psdensity_conformal}).  
  For each $x\in \Om$ and Borel set
  $A\subset T^1\Om$, define
  \begin{equation} \label{eqn:defnbmm}
    \wt\mu_{BM}^x(A) 
    = \int_{v\in A} \length_\Om(\ell_v\cap \pi A)
    e^{\delta_\Gam(\beta_{v^+}(x,\pi v)+\beta_{v^-}(x,\pi v))} \
    d\mu_x(v^-)d\mu_x(v^+)
  \end{equation}
  where $\pi\colon T^1\Om\to\Om$ is the footpoint projection and
  $\length_\Om$ is Hilbert length. 
  %of %the projective line segment 
%$\ell_v\cap \pi A$. 
  Since the Busemann function is well-defined almost everywhere (Lemma
  \ref{lem:busemann} and Corollary \ref{cor:smoothfullmeasure}), this
  definition is valid.  Then $\wt\mu_{BM}^x$ is $\Gam$-invariant by the
  definition of a $\delta_\Gam$-dimensional Busemann density and the
  cocycle property of the Busemann function.  On $T^1M$ the measure is
  finite, and we may normalize it so $\mu_{BM}^x(T^1M)=1$. 

  Recall that $T^1\Omega_{\reg}$ is the set of regular vectors, which are
  vectors $v$ whose endpoints $v^-,v^+$ in $\partial\Omega$ are both smooth
  and extremal, and $T^1M_{\reg}$ is the projection of these vectors to
  $T^1M$ (Definition \ref{def:regularvectors}).

  The following Lemma is clear given the discussion above. 
  \begin{lemma}
    The regular set $T^1M_{\reg}$ is a set of full $\mu_{BM}$-measure. 
    \label{lem:regularsetfullmeasure}
  \end{lemma}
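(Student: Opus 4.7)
The plan is to reduce the statement to Proposition \ref{prop:nulltriangles} via the explicit Hopf-coordinate form of the Bowen-Margulis measure in \eqref{eqn:defnbmm}.

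First I would identify the non-regular set in terms of the boundary. A vector $v \in S\Omega$ is non-regular precisely when at least one of $v^+, v^-$ lies in the boundary of some properly embedded triangle. Hence
\[
	S\Omega \smallsetminus S\Omega_{\reg} \subset \pi^{-1}\big(\text{base}\big) \cap \{v : v^+ \in \mathcal N \text{ or } v^- \in \mathcal N\},
\]
where $\mathcal N := \bigcup_{\triangle \in \mathcal T} \bdv \triangle \subset \bdv \Omega$ is the set of non-proper-extremal points.

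Next I would show $\mu_x(\mathcal N) = 0$. By Theorem \ref{thm:ben3mfldgeom}(d), $\Gamma$ has only finitely many orbits of properly embedded triangles, and since $\Gamma$ is countable it follows that $\mathcal T$ is countable. Proposition \ref{prop:nulltriangles} gives $\mu_x(\bdv \triangle) = 0$ for every $\triangle \in \mathcal T$, so countable subadditivity yields $\mu_x(\mathcal N) = 0$.

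Finally I would apply Fubini to the defining formula \eqref{eqn:defnbmm}. Reading the integrand as a (locally finite) measure on $\bdv \Omega \times \bdv \Omega \times \R$ with the density $e^{\delta_\Gamma(\beta_{v^+}(x,\pi v)+\beta_{v^-}(x,\pi v))}\, d\mu_x(v^-)\,d\mu_x(v^+)\,dt$ in Hopf-type coordinates, the non-regular set is contained in
\[
	(\mathcal N \times \bdv\Omega \times \R) \cup (\bdv\Omega \times \mathcal N \times \R),
\]
which has $\mu_x \otimes \mu_x \otimes dt$-measure zero by Fubini and the previous step. Since the conformal density is locally bounded on proper-extremal pairs (indeed on all of its domain of definition), multiplying by it does not change null sets, so $\wt\mu_{BM}^x(S\Omega \smallsetminus S\Omega_{\reg}) = 0$. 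Pushing down to $SM$ gives $\mu_{BM}(SM \smallsetminus SM_{\reg}) = 0$.

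The only mild subtlety is writing \eqref{eqn:defnbmm} rigorously in Hopf coordinates so that Fubini applies; since $v \mapsto (v^-, v^+, t)$ is a Borel bijection on the open set of vectors whose projective lines have two distinct proper-extremal endpoints, and since this open set contains $S\Omega_{\reg}$, there is no obstruction. No new geometric input beyond Proposition \ref{prop:nulltriangles} and Theorem \ref{thm:ben3mfldgeom}(d) is required.
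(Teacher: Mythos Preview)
Your argument is correct and is exactly the approach the paper takes: the paper's proof is the one-line statement that this is ``an immediate consequence of Proposition~\ref{prop:nulltriangles} and the definition of $\mu^x_{BM}$,'' and you have simply unpacked that sentence by making the countability of $\mathcal T$ and the Fubini step explicit.
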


%A posteriori, 
  For the remainder of the paper, we will let $\mu_{BM}:=\mu_{BM}^x$ for
  some $x\in\Om$.  Note that since the $\mu_{BM}^x,\mu_{BM}^y$ are
  equivalent by construction, we will have that they are in fact equal up
  to a constant after the proof of ergodicity is complete. 

  \subsection{Ergodicity}
  \label{sec:ergodic}

  Let $d$ be the Finsler metric on $T^1M$ discussed in \cite[Section
  4.1]{braytop}.  Define the $\phi^t$-invariant strong unstable foliations
  for $v\in T^1M_{\reg}$ to be
  \[
    W^{su}(v)=\{w\in T^1M \mid d(\phi^{-t}v,\phi^{-t}w)\to0 \text{ as
    }t\to+\infty\}
  \]
  and similarly for $W^{ss}(v)$, the strong stable foliation, which is
  contracted in forward time.  The weak unstable set $W^{ou}(v)$ is the
  disjoint union of $W^{ss}(\phi^tv)$ for all $t\in\R$, and similarly for
  the weak stable set $W^{os}(v)$. This gives us a flow-invariant foliation
  of the weak unstable sets by strong unstable leaves, and similarly for
  the stable foliation.

  \begin{lemma}
    For all regular $v,w$ with $v\neq -w$, we have 
    \[
      W^{ss}(v)\cap W^{ou}(w)\neq\varnothing. 
    \]
    \label{lem:global leaves}
  \end{lemma}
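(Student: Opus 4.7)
The plan is to construct the required vector $u$ explicitly as the tangent vector of the projective line joining $w^-$ to $v^+$, crossing the appropriate horosphere based at $v^+$. Concretely, $u \in W^{ss}(v) \cap W^{ou}(w)$ should satisfy $u^+ = v^+$, $u^- = w^-$, and $\pi u \in \mathcal H_{v^+}(\pi v)$. The hypothesis $v \neq -w$ (together with regularity) will be used to guarantee $v^+ \neq w^-$, so that there is a nondegenerate line to work with.

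First I would construct the geodesic line. Since $v,w$ are regular, $v^+$ and $w^-$ are proper extremal points of $\bd\Om$; by Benoist's Theorem \ref{thm:ben3mfldgeom}(f), any open line segment in $\bd\Om$ lies in the boundary of a properly embedded triangle, and its endpoints are either vertices (non-proper) or interior points of edges (non-extremal). Hence if $v^+ \neq w^-$, the open projective chord $\ell$ from $w^-$ to $v^+$ cannot lie in $\bd\Om$, so by convexity it lies entirely in $\Om$. Parametrize $\ell$ at unit Hilbert speed with $\ell(t)\to v^+$ as $t\to+\infty$ and $\ell(t)\to w^-$ as $t\to-\infty$; this is a projective, hence geodesic, line in $(\Om,d_\Om)$.

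Next I would locate the horospheric crossing using the Busemann function. Fix $y_0 = \ell(0)$ on $\ell$. Since $v^+$ is proper extremal, Lemma \ref{lem:busemann} guarantees $\beta_{v^+}(y_0,\ell(t)) = t$ along the projective ray $t \mapsto \ell(t)$ for $t \geq 0$, and by symmetric argument on the reversed ray this extends to all $t\in\R$. The cocycle property then gives
\[
\beta_{v^+}(\pi v,\ell(t)) \;=\; \beta_{v^+}(\pi v,y_0) + \beta_{v^+}(y_0,\ell(t)) \;=\; C + t,
\]
where $C = \beta_{v^+}(\pi v,y_0)$ is finite by Lemma \ref{lem:busemann} applied at the proper extremal point $v^+$. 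In particular $t_0 := -C$ is the unique time at which $\ell(t_0)\in\mathcal H_{v^+}(\pi v)$ (horospheres are well-defined here by Lemma \ref{lem:horospheres}). Letting $u$ be the unit tangent vector to $\ell$ at $\ell(t_0)$ pointing toward $v^+$, we have $u^+ = v^+$, $u^- = w^-$, and $\pi u \in \mathcal H_{v^+}(\pi v)$, so $u$ sits in the desired intersection.

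The main obstacle is the geometric step of ensuring the chord from $w^-$ to $v^+$ genuinely lies in $\Om$, which is where the regularity hypothesis (proper extremality of both endpoints) is essential and where the non-strict convexity of $\Om$ makes the argument more delicate than in the $\mathrm{CAT}(0)$ setting. One has to carefully separate the hypothesis $v\neq -w$ from the stronger requirement $v^+\neq w^-$: if $v^+=w^-$ then the intersection is forced to be empty, so either the statement tacitly assumes this is excluded or one uses regularity together with $v\neq -w$ to exclude this degenerate case (e.g., by noting that a coincidence $v^+=w^-$ at a proper extremal boundary point combined with the remaining data forces $v=-w$ in the hyperbolic part). Once the chord is in place, the computation of the Busemann function along $\ell$ is a direct application of Lemma \ref{lem:busemann} and the cocycle identity, and continuity of $\beta_{v^+}$ (Lemma \ref{lem:continuitybusemann}) makes the horospheric crossing immediate.
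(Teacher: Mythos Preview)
Your proposal is correct and follows essentially the same approach as the paper: identify $u$ via the geometric characterization $u^+=v^+$, $u^-=w^-$, $\pi u\in\mathcal H_{v^+}(\pi v)$, using that the chord $(w^-v^+)$ lies in $\Om$ because both endpoints are proper extremal, and then locating the horospheric crossing. The paper's own proof is terser but does exactly this, citing the geometric description of $\wt W^{ss}$ and $\wt W^{os}$ and the Busemann picture (Figure~\ref{fig:busemann}); your version simply unpacks the steps.

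Your hesitation about $v\neq -w$ versus $v^+\neq w^-$ is well placed: the paper asserts the latter follows from the former without justification, and in $S\Om$ this is not literally true (one can have $v^+=w^-$ with $v^-\neq w^+$). The lemma is applied on $SM$, where one is free to choose lifts, and since the $\Gam$-orbit of any boundary point is infinite one can always pick lifts with $\tl v^+\neq\tl w^-$; in the actual application (Lemma~\ref{lem:abscts}) the condition holds automatically. So this is a minor imprecision shared by both arguments rather than a gap in yours.
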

  \begin{proof}
    If $v$ is regular then $W^{ss}(v)$ is defined by a geometric
    characterization on the universal cover \cite{Ben1,braytop}: 
    \begin{align*}
      \wt W^{ss}(v) & = \{w\in T^1\Om \mid v^+=w^+, \ \pi w\in \mathcal
	H_{v^+}(\pi v)\},  \\
      \wt W^{os}(v) & = \{w\in T^1\Om \mid v^+=w^+\}.
    \end{align*}
    where $\mathcal H_{v^+}(\pi v)$ is the globally defined horosphere
    through $\pi v$ at $v^+$ (Corollary \ref{cor:horospheres}).  
    \ifextended
    Since $\Gam$, hence $d\Gam$, acts by isometries on $T^1\Om$, $\wt
    W^{ss}(\tl{v})$ projects to $W^{ss}(v)$ in $T^1M$ for any lift $\tl{v}$
    of $v$.  Choose lifts $\tl{v},\tl{w}$ in $T^1\Om$ with endpoints
    $v^+,w^-$ smooth extremal points. Then there exists a $\tl{u}\in \wt
    W^{ss}(\tl{v})$ such that $u^-=w^-$ as long as $w^-\neq v^+$, which is
    guaranteed by the assumption that $v\neq -w$. Since $u^-=w^-$,
    $\tl{u}\in \wt W^{ou}(\tl{w})$. Project $\tl{u}$ to $T^1M$ and we have
    the desired $u\in W^{ss}(v)\cap W^{ou}(w)$.  
    \else
    The result follows the geometric interpretation of the Busemann
    function in Lemma \ref{lem:busemann}.
    \fi
  \end{proof}

  \subsubsection{The Hopf argument}

  We first establish or recall basic facts which set up the ergodicity proof. 
  Let $f\colon T^1M\to\R$ be integrable. Then the forward and backward
  Birkhoff averages of $f$ for $\phi$ are , respectively, 
  \begin{align*}
    &f^+(v)=\lim_{t\to+\infty}\frac1t\int_0^t f\circ \phi^s(v)\ ds, \\
    &f^-(v)=\lim_{t\to+\infty}\frac1t\int_0^t f\circ \phi^{-s}(v)\ ds. 
  \end{align*}
  By the Birkhoff ergodic theorem, $f^+$ and $f^-$ exist for
  $\mu_{BM}$-almost every $v\in T^1M$ (see \cite[Theorem 4.1.2]{MDS}). 
%Similarly, let ${f}^-$ be the Birkhoff averages for $\phi^{-1}$.
  The following lemma is straightforward to verify by compactness of $T^1M$:

  \begin{lemma}
    Forward Birkhoff averages of continuous functions are constant on strong stable leaves of
    regular vectors and backward Birkhoff averages as constant on strong unstable leaves.
    \label{lem:constant on leaves}
  \end{lemma}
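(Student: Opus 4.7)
The plan is to reduce the claim to the standard Hopf argument, which works verbatim once one knows that points on the same strong stable leaf are forward-asymptotic in $d$. Since $SM$ is compact, any continuous $f\colon SM\to\R$ is uniformly continuous: for every $\ep>0$ there is $\del>0$ with $|f(u)-f(u')|<\ep$ whenever $d(u,u')<\del$. The definition of $W^{ss}(v)$ directly states $d(\phi^tv,\phi^tw)\to 0$ as $t\to+\infty$ for $w\in W^{ss}(v)$ (and the geometric description via horospheres in Lemma \ref{lem:global leaves}, inherited from \cite{Ben1,braytop}, confirms this at regular vectors). Thus for regular $v$ and $w\in W^{ss}(v)$, there is $T=T(\ep)$ such that $|f(\phi^tv)-f(\phi^tw)|<\ep$ for all $t\geq T$.

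Next I would average: write
\[
\frac1t\int_0^t f(\phi^sv)\,ds - \frac1t\int_0^t f(\phi^sw)\,ds = \frac1t\int_0^T\bigl(f(\phi^sv)-f(\phi^sw)\bigr)\,ds + \frac1t\int_T^t\bigl(f(\phi^sv)-f(\phi^sw)\bigr)\,ds.
\]
The first term is $O(1/t)$ since $f$ is bounded on the compact space $SM$, and the second term is bounded in absolute value by $\ep$. Letting $t\to+\infty$ and then $\ep\to 0$, the two Birkhoff averages agree wherever they are both defined, giving $f^+(v)=f^+(w)$. The argument for backward averages and $W^{su}(v)$ is identical with $t\to-\infty$, using that $W^{su}$ is contracted by $\phi^{-t}$.

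The one subtlety to address, rather than a genuine obstacle, is that $W^{ss}$ and $W^{su}$ are only defined at regular vectors; but the lemma is stated only for regular $v$, so this is exactly the setting in which the asymptotic-contraction property is available. There is nothing in the Benoist setting that breaks the classical argument here, since regularity of $v$ is enough to invoke the geometric characterization of $\wt W^{ss}(v)$ as a globally defined horosphere (Lemma \ref{lem:horospheres}), and the contraction of horosphere distance along the flow is the same as in the strictly convex case for rays tending to a proper extremal endpoint. The compactness of $SM$ takes care of uniformity in $f$, so the main substantive ingredient is simply the already-established forward-asymptoticity along $W^{ss}$.
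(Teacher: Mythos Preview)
Your proof is correct and is essentially the same as the paper's: both use uniform continuity of $f$ on the compact $SM$ together with the forward asymptoticity $d(\phi^t v,\phi^t w)\to 0$ for $w\in W^{ss}(v)$ to conclude that the Ces\`aro averages coincide. Your explicit splitting of the integral at a time $T$ is just a slightly more detailed version of the paper's one-line estimate $\bigl|\lim\frac1t\int_0^t f(\phi^s v)\,ds - \lim\frac1t\int_0^t f(\phi^s w)\,ds\bigr|\le \lim\frac1t\int_0^t |f(\phi^s v)-f(\phi^s w)|\,ds=0$.
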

  \ifextended
  \begin{proof}
    Suppose $f$ is continuous, hence uniformly continuous on the compact $T^1M$ and thus $L^1$-integrable. 
    If $v$ is forward regular, then for all $w\in W^{ss}(v)$, $\displaystyle\lim_{t\to+\infty} | f(\phi^t(v)-f(\phi^tw)|=0$, so ${f}^+(v)={f}^+(w)$:
    \begin{align*}
      \left| \lim_{t\to+\infty} \frac1t \int_0^t f\circ\phi^s(v)\
      ds-\lim_{t\to+\infty}\frac1t \int_0^t f\circ \phi^s(w) \ ds \right| \\ \leq
      \lim_{t\to+\infty} \frac1t \int_0^t | f(\phi^sv)-f(\phi^sw)|\ ds = 0. 
    \end{align*} 
  \end{proof}
  \fi

  Since $\phi$ is invertible, $f^+=f^-$ $\mu_{BM}$-almost
  everywhere
  (see \cite[Proposition 4.1.3]{MDS}). 
  \ifextended
  \else
  We have the following classical lemma, which we do not prove here, which allows us to verify
  ergodicity by proving $f^+$ is constant almost everywhere for all continuous $f$. 
  \fi

  \begin{lemma}
    If $f^+$ is constant $\mu_{BM}$-almost everywhere for all continuous $f$, then every
    $\phi^t$-invariant $L^1$-integrable function is constant $\mu_{BM}$-almost everywhere. 
    \label{lem:hopf cts check}
  \end{lemma}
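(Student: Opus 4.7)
The plan is to use density of continuous functions in $L^1(SM,\mu_{BM})$ together with the $L^1$-contraction property of the Birkhoff averaging operator. Let $F\in L^1(SM,\mu_{BM})$ be $\phi^t$-invariant. Since $F\circ\phi^s=F$ a.e.\ for each $s$, the forward Birkhoff average $F^+$ equals $F$ wherever $F^+$ is defined, which by the Birkhoff ergodic theorem is $\mu_{BM}$-almost everywhere.

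Because $SM$ is compact (and hence $\mu_{BM}$ is a finite regular Borel measure after normalization), continuous functions are dense in $L^1(SM,\mu_{BM})$. So choose a sequence $f_n$ of continuous functions with $\|f_n-F\|_1\to 0$. For each $n$, write the finite-time average $A_t f:=\tfrac{1}{t}\int_0^t f\circ\phi^s\,ds$. Since $\mu_{BM}$ is $\phi^t$-invariant, $A_t$ is an $L^1$-contraction: $\|A_t f_n-A_t F\|_1\le \|f_n-F\|_1$. Passing to the a.e.\ limit and applying Fatou's lemma gives $\|f_n^+-F^+\|_1 \le \|f_n-F\|_1\to 0$, so $f_n^+\to F$ in $L^1$.

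By hypothesis, for each $n$ there is a constant $c_n$ with $f_n^+=c_n$ $\mu_{BM}$-almost everywhere. Then $\|c_n-F\|_1\to 0$, which forces $\{c_n\}$ to be Cauchy in $\R$ (from $|c_m-c_n|=\|c_m-c_n\|_1 \le \|c_m-F\|_1+\|F-c_n\|_1$ since $\mu_{BM}$ is a probability measure). Let $c=\lim c_n$. Then $\|c-F\|_1 \le \|c-c_n\|_1+\|c_n-F\|_1\to 0$, so $F=c$ $\mu_{BM}$-almost everywhere, as desired.

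The only mild subtlety is the passage from the $L^1$-contraction of $A_t$ to an $L^1$-bound on the Birkhoff limits $f_n^+,F^+$; I would handle this via Fatou applied to $|A_t f_n - A_t F|$ along a subsequence where $A_t$ converges pointwise a.e., using that the dominated nature of Birkhoff sums keeps everything integrable. Aside from this step, the argument is purely soft functional analysis and uses no additional geometry of the Benoist $3$-manifolds beyond compactness of $SM$ and $\phi^t$-invariance of $\mu_{BM}$ (which is built into the construction in \eqref{eqn:defnbmm}).
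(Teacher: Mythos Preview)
Your proof is correct and follows essentially the same approach as the paper: approximate an invariant $L^1$ function by continuous functions, use that Birkhoff averaging is a continuous (indeed contractive) projection from $L^1$ onto the $\phi^t$-invariant functions, and conclude that an $L^1$-limit of constants is constant. Your treatment is in fact more careful than the paper's, which simply invokes ``continuity of the projection'' without the Fatou justification you supply.
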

  \ifextended
  \begin{proof}
    Let $L^1(\mu_{BM})$ be the $L^1$ $\mu_{BM}$-integrable functions on $T^1M$ and
    $L^1(\mu_{BM},\phi)$ the $\phi^t$-invariant ones. By the Birkhoff Ergodic Theorem, Birkhoff
    averaging is a projection from $L^1(\mu_{BM})$ to $L^1(\mu_{BM},\phi)$. 
    Suppose ${f}^+$ is constant $\mu_{BM}$-almost everywhere for all continuous $f$. Since
    continuous
    functions are dense in $L^1$, any $g\in L^1(\mu_{BM},\phi)$ can be approximated by
    continuous $f_n$.
    Let $A_n$ be the sets of full measure such that ${f_n^+}$ is constant on $A_n$. By
    continuity of the
    projection, ${f_n^+}\to {g^+}=g$ implies $g$ must be constant on $\bigcap_{n=1}^\infty A_n$,
    also a
    set of full measure. 
  \end{proof}
  \else
  \fi

  We make the arguments locally in the universal cover and conclude ergodicity by transitivity of the
  flow on the quotient.
  We define strong unstable conditional measures as induced Patterson-Sullivan measure on strong
  unstable leaves:
  \begin{equation}
    \mu_v^{su}(A) = \int_{w\in A\cap W^{su}(v)}e^{\delta_\Gam \beta_{w^+}(x,\pi w)} \
    d\mu_x(w^+).
    \label{def:unstableconditionals}
  \end{equation}
  We can define the strong stable conditionals $\mu_v^{ss}$ on $W^{ss}(v)$ similarly. 
  Note that for $w\in W^{su}(v)$, we have $\pi w\in \mathcal H_{v^-}(\pi v)$ and $w^-=v^-$, hence
  $\beta_{w^+}(x,\pi w)$ is constant over $w\in W^{su}(v)$ and the conditional measures will
  not depend the point in a leaf of the foliation. 
  We will say the strong unstable foliation is \emph{absolutely continuous}
  if the associated strong stable conditionals are absolutely continuous as
  measures.

  \begin{lemma}
    The strong unstable foliations are absolutely continuous for all
    regular points in $T^1M$. 
    \label{lem:abscts}
  \end{lemma}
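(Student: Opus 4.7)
The plan is to establish a local product decomposition of $\wt\mu_{BM}$ in Hopf-type coordinates at regular points, and read off absolute continuity from this decomposition.

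First I would fix a regular $v_0 \in S\Om$ and build a coordinate chart on a neighborhood of $v_0$ in the regular set. Associate to each nearby regular $v$ the triple $(v^-, v^+, s(v)) \in \bdv\Om\times\bdv\Om\times\R$, where $s(v)$ is the signed Hilbert distance along $\ell_v$ from a reference horosphere $\mathcal H_{v_0^-}(\pi v_0)$ to $\pi v$. Since $v_0^\pm$ are proper extremal, Theorem \ref{thm:topologybasepoints} and continuity of the Busemann function (Lemma \ref{lem:continuitybusemann}, Lemma \ref{lem:horospheres}) make this map a homeomorphism onto an open product neighborhood. The strong unstable leaf through $v$ is the slice $\{v^-=\text{const},\, s=\text{const}\}$ and the strong stable leaf is $\{v^+=\text{const},\,s=\text{const}\}$.

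Next I would rewrite the defining formula \eqref{eqn:defnbmm} in these coordinates: the length integration in $\ell_v \cap \pi A$ becomes integration in $s$, and
\[
d\wt\mu_{BM} \;=\; e^{\delta_\Gam\bigl[\beta_{v^+}(x,\pi v) + \beta_{v^-}(x,\pi v)\bigr]}\, d\mu_x(v^-)\, d\mu_x(v^+)\, ds.
\]
By Fubini, disintegrating along strong unstable leaves yields on each leaf $\{v^-=\eta,\,s=s_0\}$ a conditional of the form $e^{\delta_\Gam\beta_{v^+}(x,\pi v)}\,d\mu_x(v^+)$ up to a $v^+$-independent factor; this matches the geometrically defined $\mu^{su}_v$ of \eqref{def:unstableconditionals} up to a bounded positive scalar. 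Dually, on each strong stable transversal $\{v^+=\xi\}$ the transverse measure is $e^{\delta_\Gam\beta_{v^-}(x,\pi v)}\,d\mu_x(v^-)$, again up to a bounded scalar, which is exactly the strong stable conditional $\mu^{ss}$ built from the Patterson-Sullivan density in the symmetric analogue of \eqref{def:unstableconditionals}.

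Holonomy along strong unstable leaves between two strong stable transversals, in these coordinates, is the identity on the $v^+$ factor: it moves $(v^-,\xi,s_0)$ to $(v^{-\prime},\xi,s_0')$ while keeping $\xi$ fixed. Consequently the Radon--Nikodym derivative of the pulled-back stable conditional with respect to $\mu^{ss}$ equals a ratio of exponentials of Busemann functions evaluated over a compact coordinate neighborhood. These quantities are continuous by Lemma \ref{lem:continuitybusemann}, hence bounded above and below, which gives absolute continuity of the stable conditionals (and of the unstable holonomies) on the regular set. Since $SM_{\reg}$ has full $\mu_{BM}$-measure by Lemma \ref{lem:regularsetfullmeasure}, this suffices.

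The main obstacle will be justifying the Hopf chart and the change of variables leading to the product formula for $\wt\mu_{BM}$: one must argue that the footpoint map $p(v^-,v^+,s)$ is continuous at regular triples despite the fact that $\bdv\Om$ is only $T_1$, that the length element $d\length_\Om(\ell_v)$ converts cleanly into $ds$ via the Hilbert speed parameterization, and that one may restrict to proper extremal endpoints throughout (using Proposition \ref{prop:nulltriangles}). Once these technicalities are in place, the rest is a bounded-Jacobian computation.
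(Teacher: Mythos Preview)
Your approach via Hopf coordinates and a local product decomposition is sound in principle and arrives at the same Busemann-exponential density, but it is more elaborate than the paper's argument, and your holonomy paragraph has the roles reversed: following a strong unstable leaf keeps $v^-$ fixed while $v^+$ varies, not the other way around. The map you actually describe---fixing $\xi=v^+$ and moving $v^-$---is the weak \emph{stable} holonomy between two strong \emph{unstable} transversals, which is in fact exactly what the paper computes; only your labels are inverted.

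The paper's route is more direct and sidesteps Hopf coordinates entirely. Since $\mu_v^{su}$ is defined explicitly by (\ref{def:unstableconditionals}) as Busemann-weighted Patterson--Sullivan measure on $W^{su}(v)$, one computes the density of the holonomy $h\colon W^{su}(v)\to W^{su}(w)$ straight from that formula and the cocycle property of $\beta$. Two elementary cases suffice: along the flow, $d\phi^t_\ast\mu_v^{su}/d\mu^{su}_{\phi^t v}=e^{-\delta_\Gam t}$; and for $v,w$ on the same strong stable leaf, the holonomy $h(u)\in W^{su}(w)\cap W^{os}(u)$ has density $\rho_{v,w}(u)=e^{-\delta_\Gam\beta_{u^+}(\pi u,\pi h(u))}$. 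Both are finite and positive since $\delta_\Gam>0$, giving absolute continuity with no chart, no change of variables, and no Fubini. The technical obstacles you flag---continuity of the Hopf parametrization at non-Hausdorff boundary points, converting $d\length_\Om$ to $ds$---simply never arise.
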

  \begin{proof}
    For each $t$ we have uniform contraction along flow lines:
    \begin{equation}
      \frac{d\phi_\ast^t\mu_{v}^{su}}{d\mu_{\phi^tv}^{su}} = e^{-\delta_\Gam t}
      \label{eqn:uniformcontraction}
    \end{equation}
    by the cocycle property of the Busemann function. %Since $\delta_\Gam>0$ 
    This gives us absolute
    continuity of the strong
    unstable conditionals along flow lines.
    \ifextended
    To do this computation, let $A\subset W^{su}(\phi^tv)$. Then
    \begin{align*}
      \int_{A} e^{-\delta_\Gam t} \ d \phi^{t}_\ast\mu_v^{su}(w) & =
      \int_{A}e^{-\delta_\Gam t} \ d\mu_v^{su}(\phi^{-t}w) \\
      & = \int_{w\in A} e^{-\delta_\Gam t}e^{\delta_\Gam\beta_{w^+}(x,\pi
      (\phi^{t}w))}\ d\mu_x(w^+) \quad (\text{since }w^+ = (\phi^{t}w)^+)\\
      & = \int_{w\in A} e^{-\delta_\Gam t} e^{\delta_\Gam \big(\beta_{w^+}(x,\pi
      w)+\beta_{w^+}(\pi w,\pi(\phi^{t}w))\big)} \ d\mu_x(w^+) \\
      & = \int_{w\in A} e^{-\delta_\Gam t} e^{\delta_\Gam\beta_{w^+}(x,\pi w)}e^{\delta_\Gam t}\
      d\mu_x(w^+) = \mu_{\phi^tv}^{su}(A).
    \end{align*}
    \fi

    It remains to consider $v,w$ regular vectors on the same strong stable leaf. 
    \ifextended
    This is because, given two unstable leaves $W^{su}(v_1), W^{su}(v_2)$, there exists a $w\in
    W^{ss}(v_1)\cap W^{ou}(v_2)$. Hence we can associate a vector in $W^{su}(v_1)$ to a vector in
    $W^{su}(w)$ and then pull back under the flow to a vector in $W^{su}(v_2)$. 
    \fi
    To determine absolute continuity of the strong unstable conditionals,
    we define a measurable bijection $h\colon W^{su}(v)\to W^{su}(w)$
    where, for $u\in W^{su}(v)$, we let $h(u)$ be the unique regular vector
    such that $h(u)^-=w^-$, $h(u)^+=u^+$, and $\beta_{w^-}(\pi w,\pi
    h(u))=0$; in other words, $h(u)\in W^{su}(w)\cap W^{os}(u)$ (Lemma
    \ref{lem:global leaves}).  Then we compute the density 
    \[
      \rho_{v,w}(u) := \frac{d  \mu_w^{su}}{dh_\ast\mu_v^{su}}(u) =
      e^{-\delta_\Gam \beta_{u^+}(\pi u, \pi h(u))}
  \]
  and see that $0<\rho_{v,w}(u) <\infty$ as well. To complete the
  computation, let $A\subset W^{su}(w)$. Then 
  \begin{align*}
    \int_{A} \rho_{v,w}(u) \ d h_\ast \mu_v^{su}(u) & = 
    \int_{u\in A} e^{-\delta_\Gam \beta_{u^+}(\pi u, \pi h(u))} e^{\delta_\Gam
      \beta_{u^+}(x,\pi h(u))} \ d\mu_x(u^+)	\\
      & = \int_{u\in A} e^{\delta_\Gam \beta_{u^+}(x,\pi u)}\ d\mu_x(u^+) =
      \mu_w^{su}(A) 
    \end{align*}
    by the cocycle property of the Busemann function. 
  \end{proof}

  \begin{remark}
    The final remark we make before proving ergodicity is that, locally, the
    $\mu_{BM}$-measure of a Borel set $A$ agrees with the
    $\tl{\mu}_{BM}$-measure of a lift of $A$, so we can exploit the
    Patterson-Sullivan product structure of $\mu_{BM}$ on such sufficiently
    small neighborhoods (see Definition \ref{eqn:defnbmm}). We will refer to
    this feature as the {\em local product structure of} $\mu_{BM}$. Then it
    is clear that, for such a small Borel measurable set $N\subset T^1M$
    which we identify with a lift in $T^1\Om$, we have $\mu_{BM}(N)=0$ if and
    only if $\mu_v^{su}(N)=0$ for $\mu_{BM}$-almost every $v$. 
    \label{rem:localproductstructure}
  \end{remark}
  In the arguments below, we abuse notation and treat the measures as conditional measures on
  a small neighborhood in the universal cover. 

  \begin{theorem}
    The Bowen-Margulis measure is ergodic for the geodesic flow.
    \label{thm:ergodic}
  \end{theorem}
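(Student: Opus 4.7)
The plan is to execute the classical Hopf argument using the tools developed in this section. By Lemma \ref{lem:hopf cts check}, it suffices to show that $f^+$ is $\mu_{BM}$-a.e.\ constant for every continuous $f\colon SM\to\R$. Fix such an $f$. The Birkhoff ergodic theorem gives $f^+=f^-$ on a $\mu_{BM}$-full set which by Lemma \ref{lem:regularsetfullmeasure} lies in $SM_{\reg}$. Lemma \ref{lem:constant on leaves} together with $\phi^t$-invariance then makes $f^+$ constant on each weak stable leaf $W^{os}(v)$ and $f^-$ constant on each weak unstable leaf $W^{ou}(v)$ for regular $v$.

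Next I localize in the universal cover. Choose a small $N\subset SM_{\reg}$ whose lift $\tilde N\subset S\Om$ is homeomorphic via Lemma \ref{lem:global leaves} to a product $W^{su}_{\text{loc}}\times W^{ss}_{\text{loc}}\times I$. By Equation \ref{eqn:defnbmm} and Remark \ref{rem:localproductstructure}, $\tilde\mu_{BM}|_{\tilde N}$ is equivalent, with bounded positive density, to the product of the conditionals $\mu^{su}$, $\mu^{ss}$ (Equation \ref{def:unstableconditionals}), and Lebesgue measure along the flow direction. In these coordinates $\tilde f^+$ is a function of the $W^{su}$-transverse coordinate alone and $\tilde f^-$ is a function of the $W^{ss}$-transverse coordinate alone. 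The equation $\tilde f^+=\tilde f^-$ a.e., via Fubini and the absolute continuity of the strong unstable foliation (Lemma \ref{lem:abscts}, which transfers a.e.-statements between transversals), forces both to equal a common constant $c_N$ on $\tilde N$, so $f^+\equiv c_N$ $\mu_{BM}$-a.e.\ on $N$.

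To upgrade local to global constancy: the lift $\tilde f^+$ is $\Gam$-invariant on $S\Om$ and descends by the above to a $\Gam$-invariant measurable function on $\bdv\Om$. By ergodicity of $\mu_x$ for the $\Gam$-action (Proposition \ref{prop:ergodicdensities}) this function is $\mu_x$-a.e.\ constant, so $f^+$ is $\mu_{BM}$-a.e.\ constant on $SM$. Alternatively, since $\mu_{BM}$ has full support (Lemma \ref{lem:support}) and $SM$ is connected, the local constants $c_N$ must agree on overlaps of positive measure and therefore coincide with a single global constant.

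The main obstacle will be justifying the local product decomposition and the Fubini step in this non-$C^1$ setting, where the weak stable and unstable foliations break down over boundaries of properly embedded triangles and lack smooth transversality even on $SM_{\reg}$. This is exactly what Lemmas \ref{lem:regularsetfullmeasure} and \ref{lem:abscts} are built to supply, replacing the smooth transversality of the classical rank-one proofs with measure-theoretic absolute continuity of the Patterson--Sullivan conditionals.
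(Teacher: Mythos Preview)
Your argument is correct but follows a different path through the Hopf argument than the paper does. The paper works with super-level sets $\Lambda_q = \{v : f^+(v) \geq q\}$: from the local product structure it obtains $\mu_v^{su}(\Lambda_q) > 0$ for $\mu_{BM}$-a.e.\ regular $v$, invokes Lemma~\ref{lem:abscts} to upgrade this to \emph{every} regular $v$, and then uses $f^+=f^-$ along $W^{su}(v)$ to force $\mu_v^{su}(\Lambda_q)=1$, hence $\mu_{BM}(\Lambda_q)=1$; globalization is by transitivity of the flow. Your route instead exploits that in $(v^-,v^+,t)$ coordinates $\tilde\mu_{BM}$ is already equivalent to a genuine product $\mu_x\otimes\mu_x\otimes dt$ (Remark~\ref{rem:localproductstructure}), so once $\tilde f^+$ depends only on $v^+$ and $\tilde f^-$ only on $v^-$, a bare Fubini argument yields local constancy --- Lemma~\ref{lem:abscts} is not actually doing work here, since in these coordinates the stable holonomy acts as the identity on $v^+$ and absolute continuity is automatic from the bounded Busemann density. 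Your Option~1 for globalization (descending $\tilde f^+$ to a $\Gamma$-invariant function on $\bdv\Om$ and applying Proposition~\ref{prop:ergodicdensities}) is in fact a standalone shortcut that makes the entire local step and the unstable-leaf analysis unnecessary; the paper does not take this route. Both approaches are valid; yours is more direct in this setting precisely because the Sullivan-type construction of $\mu_{BM}$ hands you the product structure from the outset, whereas the paper's level-set argument is closer to Knieper's rank-one template.
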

  \begin{proof}
    Let $f$ be a continuous function and 
    $\Lambda_q = \{v\in T^1M \mid f^+(v)\geq q\}$ for some $q\in \mathbb Q$ such that
    $\mu_{BM}(\Lambda_q)>0$. Then $\mu_v^{su}(\Lambda_q)>0$ for $\mu_{BM}$-almost every $v\in
    T^1M_{\reg}$ by the local product structure of the Bowen-Margulis measure and that
    $T^1M_{\reg}$ has full $\mu_{BM}$-measure
    (Lemma \ref{lem:regularsetfullmeasure}). 
    By Lemma \ref{lem:abscts}, the unstable conditionals are absolutely continuous
    for every pair of regular vectors, so $\mu_v^{su}(\Lambda_q)>0$ for
    \emph{every} regular vector $v$. 
    Let $G$ be the set of full $\mu_{BM}$-measure on which $f^-=f^+$ (by invertibility of the
    flow and \cite[Proposition 4.1.3]{MDS}). Then $G$ is also a set of
    full $\mu_v^{su}$-measure for $\mu_{BM}$-almost every $v$. Then for almost every $v$, we
    have $\mu_v^{su}(\Lambda_q)>0$ which implies $\mu_v^{su}(\Lambda_q\cap G)>0$, and so there exists a
    $w\in \Lambda_q \cap G\cap W^{su}(v)$. Thus for all $u\in G\cap W^{su}(v)$, a full
    $\mu_v^{su}$-measure set, we have \[
      f^+(u)=f^{-}(u) = f^-(w) = f^+(w) \geq q
    \]
    since $f^-$ is constant on strong unstable sets by $\phi^t$-invariance of $f^-$ (Lemma
    \ref{lem:constant on leaves}).
    Thus, $u\in \Lambda_q$ and $\mu_v^{su}(\Lambda_q)=1$ for $\mu_{BM}$-almost every $v$.
    This implies $\mu_{BM}(\Lambda_q)=1$ by the local product structure of $\mu_{BM}$. 
    Since $\mathbb Q$ is dense in $\mathbb R$ we conclude $f^+$ is constant on a set of full
    measure and by Lemma \ref{lem:hopf cts check} the proof is complete. 
    \ifextended
    This is because, we can let $\Lambda_q^+=\Lambda_q$ and $\Lambda_q^- = \{ v\in
      T^1M\mid f^+(v)\leq q$, and then repeat the above arguments to conclude
      for each $q\in \mathbb Q$, either $\mu_{BM}(\Lambda_q^+)=1$ or
      $\mu_{BM}(\Lambda_q^-)=1$, and not both if $f^+$ is not constant on a set of full
      measure. Then since $f^+$ is a continuous projection of a continuous
      function
      on a compact set, we have
      $r_-:=\inf \{ q\in \mathbb Q\mid \mu_{BM}(\Lambda_q^-)=1\}<\sup\{q\in \mathbb Q\mid
	\mu_{BM}(\Lambda_q^+)=1\}=:r_+$. But $\mathbb Q$ is dense and 
	$\Lambda_r^+$ or $\Lambda_r^-$ is full measure for $r^-<r<r^+$,
	contradicting either that $r^-$ is an infimum or $r^+$ is a supremum.
	\fi
      \end{proof}

      \ifextended
      \begin{remark}
	\label{rmk:transitivityarg}
	It suffices to prove ergodicity in a neighborhood by transitivity 
	\cite{braytop} 
	and $\phi^t$-invariance of $\mu_{BM}$. 
	Suppose $A$ is a full measure subset of $N$, an open neighborhood, such that $f^+$ is constant on $A$. 
	Then there exists $v\in N$ with a dense orbit, so we can cover $\phi\cdot v$ and hence $T^1M$ by $\phi\cdot N$. 
	Take a finite subcover $\{\phi^{t_n}N\}_{n=1}^k$ of $T^1M$. Then for each $n=1,\cdots,k$, by
	invariance of $\mu_{BM}$ we have
	\begin{align}
	  \mu_{BM}(N)  = \mu_{BM}(A\cap N)  = \mu_{BM}(\phi^{t_n}A\cap \phi^{t_n}N) 
	  \leq \mu_{BM}(\phi^{t_n}N) = \mu_{BM}(N) 
	  \label{eqn:transitivityarg}
	\end{align}
	implying $\cup_{n=1}^k\phi^{t_n}A$ is a full measure set in $\cup_{n=1}^k\phi^{t_n}N\supset T^1M$ on
	which $f^+$ is constant. 
      \end{remark}
      \fi

      \ifextended
      \begin{corollary}
	The $\mu_{BM}^x$ are unique up to a constant. 
	\label{cor:uniqueBMs}
      \end{corollary}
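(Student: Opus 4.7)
The plan is to observe that the equivalence of $\wt\mu_{BM}^x$ and $\wt\mu_{BM}^y$ already follows directly from the transformation rule for the Patterson--Sullivan density and the cocycle property of the Busemann function, so proportionality drops out almost for free. In fact I expect to prove something slightly stronger than the stated corollary: $\wt\mu_{BM}^x=\wt\mu_{BM}^y$ as measures on $S\Om$, not merely up to a constant.

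First I would verify this basepoint-independence directly. Fix $v\in S\Om$ with $v^+,v^-$ proper extremal; by Proposition \ref{prop:nulltriangles} such $v$ form a full-measure subset of $\bdv\Om\times\bdv\Om$ for the product Patterson--Sullivan measure, so it suffices to work on the regular set. The transformation rule $d\mu_x/d\mu_y(\xi)=e^{-\delta_\Gam\beta_\xi(x,y)}$ together with the cocycle identity $\beta_{v^+}(y,\pi v)=\beta_{v^+}(y,x)+\beta_{v^+}(x,\pi v)$ and antisymmetry $\beta_{v^+}(y,x)=-\beta_{v^+}(x,y)$ gives the cancellation
\[
e^{\delta_\Gam\beta_{v^+}(y,\pi v)}\,d\mu_y(v^+)=e^{\delta_\Gam\beta_{v^+}(x,\pi v)}\,d\mu_x(v^+),
\]
and analogously in the $v^-$ coordinate. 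Thus the integrand in \eqref{eqn:defnbmm} is $x$-independent on the full-measure regular set, and the two constructions coincide on $S\Om$; uniqueness up to a constant on $SM$ follows after dividing by $\Gam$ and normalizing to $\mu_{BM}(SM)=1$.

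For the other source of potential nonuniqueness, let $\{\nu_x\}$ be any Busemann density of dimension $\delta_\Gam$. By Theorem \ref{thm:mainthm3}, $\nu_x=c\,\mu_x$ for a single constant $c>0$ independent of $x$, and the construction \eqref{eqn:defnbmm} is bilinear in the density on $\bdv\Om\times\bdv\Om$, so the resulting Bowen--Margulis measure is scaled by $c^2$; normalization pins down the constant. The main subtlety, managed throughout by Proposition \ref{prop:nulltriangles} and Lemma \ref{lem:busemann}, is that the Busemann cocycle is only available at proper extremal endpoints; but these carry full measure for every Busemann density of positive dimension, so the cancellation above is legitimate $\mu_{BM}$-almost everywhere. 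As a sanity check the corollary also follows more abstractly from ergodicity (Theorem \ref{thm:ergodic}) plus equivalence of $\mu_{BM}^x$ and $\mu_{BM}^y$ (which is immediate from Corollary \ref{cor:equivalentdensities} and the local product structure of Remark \ref{rem:localproductstructure}), since two mutually absolutely continuous $\phi^t$-invariant ergodic probability measures must agree.
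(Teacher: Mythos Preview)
Your proof is correct, and your ``sanity check'' at the end is exactly the paper's own argument: the paper simply notes that $\mu_{BM}^x$ and $\mu_{BM}^y$ are flow-invariant, equivalent by construction, and ergodic by Theorem \ref{thm:ergodic}, hence proportional. Your primary route is genuinely different and more elementary: the direct cocycle computation showing $e^{\delta_\Gam\beta_{v^+}(x,\pi v)}\,d\mu_x(v^+)=e^{\delta_\Gam\beta_{v^+}(y,\pi v)}\,d\mu_y(v^+)$ on the regular set (full measure by Proposition \ref{prop:nulltriangles}) gives $\wt\mu_{BM}^x=\wt\mu_{BM}^y$ outright, without invoking the Hopf argument, and yields a slightly sharper statement than the corollary claims. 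The paper's approach is a one-liner once ergodicity is established and would apply even without an explicit product formula; your approach is self-contained and logically prior to Section \ref{sec:ergodic}. Your second paragraph, on dependence on the choice of Busemann density, addresses more than the corollary asks but is correct and uses Theorem \ref{thm:mainthm3} appropriately.
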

      \begin{proof}
	By definition and 
	Theorem \ref{thm:ergodic} the
	$\mu_{BM}^x$ and $\mu_{BM}^y$ are invariant, equivalent, and ergodic, so we have the
	result. 
      \end{proof}
      \fi

      \ifextended
      \begin{remark}
	An invariant measure $\mu$ for a flow $\phi^t\colon X\to X$ where $X$ is a
	probability space is \emph{mixing} if for all measurable $A,B \subset X$,
	\[
	  \mu(\phi^{-t}(A) \cap B) \to \mu(A)\mu(B) \text{ as }t\to\infty
	\]
	This is equivalent to the following dual definition: for all $f,g\in L^2(X,\mu)$, 
	\[
	  \int f(\phi^t(x)) \cdot g(x) \; d\mu(x) \to \int f \; d\mu \cdot \int g \;d\mu \text{ as }t\to\infty
	\]

	A generalization of the Hopf argument originally due to Babillot \cite[Theorem 2]{babillot}
	for the geodesic flow of rank one manifolds
	yields mixing of the Bowen-Margulis measure as soon as 
	noncommuting hyperbolic group elements have noncommensurable translation lengths,
	which applies to the Benoist 3-manifolds \cite{braytop}. 
	One is then able to apply the Hopf argument as outlined in Lemma \ref{lem:constant on
	leaves} and Theorem \ref{thm:ergodic} for weak limits of $f\circ\phi^t$ in $L^2(\mu_{BM})$.
	This argument requires working with regular vectors, hence the need for density of the
	hyperbolic length spectrum. 
      \end{remark}
      \fi

      \subsection{A measure of maximal entropy}
      \label{sec:mme}

      \ifextended
      In this subsection, we prove that $\mu_{BM}$ has entropy $\delta_\Gam
      \geq h_{top}(\phi)$ and conclude that $\mu_{BM}$ is a measure of
      maximal entropy. 

      We will need some definitions from entropy theory. Let $\mathcal A$
      be a finite measurable partition of $T^1M$. Then the entropy of a
      partition $\mathcal A =\{ A_1,\ldots, A_m\}$ with respect to a
      measure $\mu$ is 
      \[
	H_\mu(\mathcal A) = \sum_{i=1}^m -\mu(A_i) \log(\mu(A_i)).
      \]
      A refinement of a partition need only satisfy that all elements of
      the refinement are subsets of elements from the original partition.
      The least common refinement of two partitions $\mathcal A, \mathcal
      B$ is then $ \mathcal A \vee \mathcal B := \{A\cap B \mid A\in
      \mathcal A, \; B\in\mathcal B\}$.  
      \fi
      The measure-theoretical entropy of $\mu$ with respect to the finite
      measureable partition $\mathcal A =\{A_1,\ldots,A_m\}$, also know as
      the Kolmogorov-Sinai entropy, is 
      \[
	h_\mu(\phi^1,\mathcal A) = \lim_{n\to\infty} \frac1n H_\mu(\mathcal
	A_\phi^{(n)})
      \]
      where $H_\mu(\mathcal B)=-\sum_{i=1}^k \log(\mu(B_i)) \mu(B_i)$ is
      the entropy of a finite measurable partition $\mathcal
      B=\{B_1,\ldots,B_k\}$ and $\mathcal A_\phi^{(n)}:=\bigvee_{i=0}^{n-1}
      \phi^{-i}\mathcal A$ is the partition consisting of all intersections
      $\bigcap_{i=0}^{n-1} \phi^{-i} A_{j_i}$ over all possible
      $\{j_1,\ldots,j_{n-1}\}\subset\{1,\ldots,m\}$. Then the
      measure-theoretic entropy of the pair $(\phi^1,\mu)$ is 
      \[
	h_\mu(\phi^1) = \sup_{\mathcal A} h_\mu(\phi^1,\mathcal A)
      \]
      and the entropy of $\mu$ for the geodesic flow $\phi^t$ is
      $h_\mu:=h_\mu(\phi^1)$.  By work in \cite{braytop}, $\phi^t$ is
      entropy-expansive with expansivity constant $\ep>0$.  Then by
      \cite[Theorem 3.5]{bowen72}, $h_\mu = h_\mu(\phi,\mathcal A)$ for
      $\diam(\mathcal A)<\ep$. 
%\ifextended
%a consequence of $h$-expansivity is that 
%\fi
%\ifextended \[ \else $ \fi
%		h_\mu = h_\mu(\phi,\mathcal A) 
%	\ifextended \] \else $ \fi
%	for $\diam(\mathcal A)<\ep$. 
      \ifextended
      By the variational principle, we have that 
      \[
	h_{top}(\phi) = \sup_{\mu \; \phi^t\text{-inv}} h_\mu(\phi),
      \]
      and a measure $\mu$ which realizes the supremum is a measure of
      maximal entropy.
      \fi

      \ifextended
      We first need: 
      \fi

      \begin{lemma}
	There exists some $a>0$ such that 
	\[
	  \mu_{BM}(\alp) \leq e^{-\delta_\Gam n}a
	\]
	for all $\alp\in\mathcal A_\phi^{(n)}$. 
	\label{lem:mme}
      \end{lemma}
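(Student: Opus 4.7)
The strategy is to contain each partition element $\alpha \in \mathcal{A}_\phi^{(n)}$ in a forward dynamical (Bowen) ball and to bound its $\wt\mu_{BM}$-measure in the universal cover using the local product structure and the shadow estimates of Section \ref{sec:pattersonsullivan}. First I would observe that since $\text{diam}(\mathcal{A}) < \epsilon$ and $\alpha = \bigcap_{i=0}^{n-1} \phi^{-i}A_{j_i}$ for some sequence $(A_{j_i})$ in $\mathcal{A}$, any $v, w \in \alpha$ satisfy $d(\phi^i v, \phi^i w) < \epsilon$ for $i=0,\ldots,n-1$. Fixing $v \in \alpha$, this gives $\alpha \subseteq B_n(v,\epsilon) := \{w \in SM : d(\phi^i v, \phi^i w) < \epsilon,\ 0 \leq i \leq n-1\}$, and it suffices to prove $\mu_{BM}(B_n(v,\epsilon)) \leq a e^{-\delta_\Gam n}$.

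By Lemma \ref{lem:regularsetfullmeasure} the regular set is full $\mu_{BM}$-measure, so I may take $v$ regular, lift to $\wt v \in S\Om$, and use Remark \ref{rem:localproductstructure}: near $\wt v$ vectors $w$ have coordinates $(w^+,w^-,s)$ and by the defining formula \eqref{eqn:defnbmm},
\[
	d\wt\mu_{BM}(w) = e^{\delta_\Gam(\beta_{w^+}(x,\pi w)+\beta_{w^-}(x,\pi w))}\,ds\,d\mu_x(w^+)\,d\mu_x(w^-).
\]
The key geometric step is to translate forward Bowen closeness to shadow containments. For $w$ in the lifted Bowen ball, $\pi \phi^n w$ lies within $\epsilon$ of $\pi \phi^n \wt v$, and the projective ray from $\pi w$ through $\pi \phi^n w$ terminates at $w^+$; choosing $x$ in a bounded fundamental domain containing $\pi\wt v$ then gives $w^+ \in \mathcal{O}_r(x, \pi\phi^n \wt v)$ for some $r = r(\epsilon)$ independent of $n$. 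Time-$0$ closeness of the backward rays places $w^-$ in a shadow $\mathcal{O}_r(x,z)$ for some point $z$ on the backward ray of $\wt v$, which has bounded $\mu_x$-mass. Finally, the flow coordinate $s$ ranges over an interval of length at most $2\epsilon$.

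Assembling these estimates, the local exponential weight $e^{\delta_\Gam(\beta_{w^+}(x,\pi w)+\beta_{w^-}(x,\pi w))}$ is bounded on a neighborhood of $\wt v$, and applying Lemma \ref{lem:localestimate} to the shadow of $\pi\phi^n\wt v$ (which is at distance $n + O(1)$ from $x$) yields
\[
	\wt\mu_{BM}(B_n(\wt v,\epsilon)) \leq 2\epsilon \cdot C \cdot b(r) e^{-\delta_\Gam n} \cdot b(r) = a\,e^{-\delta_\Gam n}
\]
for $a$ independent of $n$ and $\alpha$. The main obstacle is the geometric translation of forward Bowen closeness into the claimed shadow containments in the non-$C^1$ Finsler setting, where stable and unstable manifolds fail to be smooth globally; however, at regular $\wt v$ horospheres are globally defined (Lemma \ref{lem:horospheres}) and the Busemann function is continuous (Lemma \ref{lem:continuitybusemann}), which provides precisely the control on projective rays from nearby points needed to carry out the estimate.
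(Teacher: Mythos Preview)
Your proposal is correct and follows essentially the same approach as the paper: contain $\alpha$ in a Bowen ball, lift to $S\Om$, exploit the local product structure of $\wt\mu_{BM}$, and bound the $w^+$-factor by a shadow of a point at distance $n+O(1)$ from the basepoint via Lemma~\ref{lem:localestimate}. The paper organizes the shadows slightly differently (it places $w^-$ in a shadow $\mathcal O_{2\ep+\diam M}(\ell_v(n),p)$ and $w^+$ in $\mathcal O_\ep(w^-,\ell_v(n))$, then converts the latter to a $\mu_p$-estimate via the transformation rule), but the content is the same; your remark about ``time-$0$ closeness of the backward rays'' controlling $w^-$ is imprecise since the Bowen ball gives no backward information, but this is inconsequential because one only needs the trivial bound $\mu_x(\bdv\Om)<\infty$ on that factor.
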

      \begin{proof}
	\ifextended
	Let $\mathcal A$ be a partition with diameter less than $\ep$, the
	$h$-expansivity constant for $h_{BM}$. Note that for all $v\in \alp
	\in \mathcal A_\phi^{(n)}$, 
	\[
	  \alp\subset \bigcap_{k=0}^{n-1} \phi^{-k} B(\phi^kv,\ep).
	\]
	Let $v,w\in\alp$ be regular vectors Then
	$d_\Om(\ell_v(t),\ell_w(t))\leq \ep$ for all $t\in[0,n]$. Choose
	$p\in\Om$ to be the reference point for $\mu_{BM}$. Since $\ep$ is
	sufficiently small, we can lift $v,w$ to $\tl{v},\tl{w}\in \Om$
	such that $\tl{v},\tl{w},p$ are all in the same fundamental domain
	with diameter $\diam(M)$.  Denote by $c_{\xi,x}(t)$ the projective
	line through any two $\xi\in\bd\Om$ and $x$ parameterized at unit
	Hilbert speed such that $c_{\xi,x}(n)=x$. Choose $\xi=w^-$ and
	$x=\ell_v(n)$. Then
	\begin{align*}
	  d_\Om(c_{\xi,x}(0),p)&\leq d_\Om(c_{\xi,x}(0),\pi \tl{v})
	  +d_\Om(\pi\tl{v},p) \\
	  & \leq d_\Om(c_{\xi,x}(0),\pi \tl{w}) + d_\Om(\pi \tl{w},\pi
	  \tl{v}) + d_\Om(\pi \tl{v},p) \\
	  & \leq \ep + \ep +\diam(M) = 2\ep +\diam(M).
	\end{align*}
	Note that $d_\Om(c_{\xi,x}(0),\ell_w(0)) <
	d_\Om(c_{\xi,x}(n),\ell_w(n))\leq \ep$ because $\xi=w^-$ and $w$ is
	a regular vector. Also, clearly $w^+ \in \mathcal O_\ep(\xi,x)$.
	Recall the projection $p_\infty(v)=(v^-,v^+) \in \bd\Om^2$. The
	above inequalities imply that  
	\[
	  p_\infty(\tl{\alp}) \subset \bigcup_{\eta\in\mathcal
	    O_{2\ep+\diam M}(x,p)} \{\eta\} \times \mathcal O_\ep(\eta,x).
	\]

	Now, for all $\eta\in \mathcal O_{2\ep+\diam M}(x,p)$, choose $q\in
	c_{\eta,x}\cap B_\Om(p,2\ep+\diam M)$. 
	Then 
	\begin{align*}
	  d_\Om(x,\pi \tl{v}) &\leq d_\Om(p,x) +d_\Om(p,\pi\tl{v}) \\
	  & \leq d_\Om(q,x) + d_\Om(p,q) + d_\Om(p,\pi\tl{v})
	\end{align*}
	implying
	\[
	  d_\Om(q,x) \geq d_\Om(x,\pi\tl{v}) -d_\Om(p,q)-d_\Om(p,\pi\tl{v})
	  \geq n-2\ep-2\diam M
	\]
	because $q\in B_\Om(p,2\ep+\diam M)$ and $p,\pi\tl{v}\in M$. 

	Now, since $-d_\Om(p,q) \leq \beta_\eta(p,q)$ for all $p,q$ and for
	$\mu_{BM}$-almost every $\eta$ (for which the Busemann function is
	well-defined),
	\begin{align*}
	  \mu_p (\mathcal O_\ep(\eta,x)) &= e^{-\delta_\Gam
	  \beta_\eta(p,q)}\mu_q(\mathcal O_\ep(\eta,x)) & \text{
	  transformation rule}
	  \\
	  & \leq e^{\delta_\Gam d_\Om(p,q)}\mu_q(\mathcal O_\ep(\eta,x)) \\
	  & \leq e^{\delta_\Gam(2\ep +\diam M)}b e^{-\delta_\Gam d_\Om(q,x)} &
	  \text{Lemma \ref{lem:localestimate}} \\
	  & \leq e^{\delta_\Gam (2\ep+\diam M)}b
	  e^{-\delta_\Gam(n-2\ep-2\diam M)} = \bar{b} e^{-\delta_\Gam n}.
	\end{align*}

	Note that $p_{\infty}(\tl{\alp})\subset \mathcal O_{2\ep+\diam
	M}(x,p)\times \mathcal O_\ep(\eta,x)$ and
	$\frac{d\bar{\mu}_p}{d\mu_p\otimes d\mu_p}(v^-,v^+)=
	e^{2\delta_\Gam \langle v^-,v^+\rangle_p}$ is bounded above by some
	$C>0$ since $2\langle v^-,v^+\rangle_p = \beta_{v^-}(\pi
	v,p)+\beta_{v^+}(\pi v,p) \leq 2d_\Om(\pi v,p)\leq \diam M$ by
	choice of $p$ in the same fundamental domain as $\pi v$. Then
	\begin{align*}
	  \mu_{BM}(\alpha) & \leq \int_{\alp} \length_\Om (\pi\alp
	  \cap\ell_v) \ d\bar{\mu}_p(v) \\
	  &\leq \int_\alp \diam T^1M \ d\bar{\mu}_p(v) \\
	  & \leq C \diam T^1M\  \mu_p(\mathcal O_{2\ep+\diam
	  M}(x,p))\mu_p(\mathcal O_\ep(\eta,x)) \\
	  & \leq C \diam T^1M\  \mu_p (\bdv\Om) \mu_p(\mathcal
	  O_\ep(\eta,x)) \\
	  & \leq a e^{-\delta_\Gam n}.
	\end{align*}
	\else
	The proof does not differ from \cite[Lemma 2.5]{Kn98}.
%	{\color{red} now this needs to be checked}
	\fi
      \end{proof}

      \begin{theorem}
	The Bowen-Margulis measure is a measure of maximal entropy.
	\label{lem:maxentropy}
      \end{theorem}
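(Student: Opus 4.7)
The plan is to combine the bound on $\mu_{BM}(\alpha)$ from Lemma \ref{lem:mme} with entropy expansivity and the variational principle to conclude. Since $h$-expansivity gives $h_{\mu_{BM}} = h_{\mu_{BM}}(\phi^1, \mathcal A)$ for any partition $\mathcal A$ with $\diam(\mathcal A) < \epsilon$, I only need to estimate $h_{\mu_{BM}}(\phi^1, \mathcal A)$ for one such partition and then compare with $h_{top}(\phi)$.

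First, fix a finite measurable partition $\mathcal A$ of $SM$ with diameter less than the entropy-expansivity constant $\epsilon$. By Lemma \ref{lem:mme}, every atom $\alpha \in \mathcal A_\phi^{(n)}$ satisfies $\mu_{BM}(\alpha) \leq a\, e^{-\delta_\Gam n}$ for some $a>0$ independent of $n$ and $\alpha$. Since $-\log$ is decreasing, we obtain
\[
H_{\mu_{BM}}(\mathcal A_\phi^{(n)}) = -\sum_{\alpha \in \mathcal A_\phi^{(n)}} \mu_{BM}(\alpha)\log \mu_{BM}(\alpha) \geq \delta_\Gam n - \log a.
\]
Dividing by $n$ and sending $n \to \infty$ yields $h_{\mu_{BM}}(\phi^1, \mathcal A) \geq \delta_\Gam$, and by entropy expansivity we conclude $h_{\mu_{BM}} \geq \delta_\Gam$.

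For the reverse inequality, I would appeal to Theorem \ref{thm:mainthm4} together with a Manning-type argument: the volume growth bound $\vol(S_\Om(x,t)) \asymp e^{\delta_\Gam t}$, the fact that $\Gam$ acts cocompactly, and standard volume growth estimates for the geodesic flow of a compact Finsler manifold yield $h_{top}(\phi) \leq \delta_\Gam$. In combination with the variational principle $h_{top}(\phi) = \sup_\mu h_\mu(\phi)$, the inequality $h_{\mu_{BM}} \geq \delta_\Gam \geq h_{top}(\phi)$ forces $h_{\mu_{BM}} = h_{top}(\phi) = \delta_\Gam$, so $\mu_{BM}$ realizes the supremum and is a measure of maximal entropy.

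The main obstacle is the upper bound $h_{top}(\phi) \leq \delta_\Gam$, since Manning's inequality is classical for Riemannian manifolds but needs justification in the nonsmooth Finsler setting of Hilbert geometries; however, the argument goes through because the needed comparison between orbit counts and volume growth only uses cocompactness and the triangle inequality, both of which hold here. The lower bound is the straightforward computation sketched above, using only Lemma \ref{lem:mme} and the definition of Kolmogorov--Sinai entropy.
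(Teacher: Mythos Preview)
Your proof is essentially the same as the paper's: both compute $H_{\mu_{BM}}(\mathcal A_\phi^{(n)})\geq \delta_\Gam n-\log a$ from Lemma~\ref{lem:mme}, invoke entropy-expansivity to identify $h_{\mu_{BM}}$ with $h_{\mu_{BM}}(\phi^1,\mathcal A)$, and close with the variational principle together with the inequality $h_{top}(\phi)\leq \delta_\Gam$.

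The only difference is in how you justify $h_{top}(\phi)\leq \delta_\Gam$. The paper simply cites the companion article \cite{braytop}. You instead propose to derive it from Theorem~\ref{thm:mainthm4} via a Manning-type argument. This is plausible, but be careful about the direction: Manning's general inequality is $h_{vol}\leq h_{top}$, and the reverse inequality (hence equality) in the Riemannian case requires nonpositive curvature, not merely cocompactness and the triangle inequality. What is actually needed is that the assignment $v\mapsto (\pi v,\pi\phi^t v)$ is essentially injective on $SM$, i.e.\ that projective-line geodesics in $\Om$ do not refocus; this convexity property does hold in Hilbert geometry and makes the spanning-set bound go through, but it is a genuine geometric input beyond what you listed. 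With that caveat your sketch is fine, and the paper's route via \cite{braytop} amounts to the same thing packaged elsewhere.
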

      \begin{proof}
	\ifextended
	By the definitions and Lemma \ref{lem:mme}, 
	\begin{align*}
	  H_{\mu_{BM}}(\mathcal A_\phi^{(n)}) & = \sum_{\alp\in\mathcal A_\phi^{(n)}}
	  - \mu_{BM}(\alp) \log \mu_{BM}(\alp) \\
	  &\geq \sum_{\alp \in \mathcal A_\phi^{(n)}} -\mu_{BM}(\alp) \log (e^{-\delta_\Gam n}a) \\
	  &= (\delta_\Gam n -\log a) \sum_{\alp\in\mathcal A_\phi^{(n)}} \mu_{BM}(\alp) \\
	  & = \delta_\Gam n-\log a
	\end{align*}
	because $\mathcal A_\phi^{(n)}$ is a partition and we normalized $\mu_{BM}(T^1M)=1$. 
	\else 
	The proof is as in \cite[Theorem 5.12]{Kn98}.  First, using Lemma \ref{lem:mme} one computes
	$H_{\mu_{BM}}(\mathcal A_\phi^{(n)}) \geq \delta_\Gam n -\log a$. 
	\fi
	In \cite[Proposition 7.3]{braytop}, since the quotient is compact
	and by a technical lemma of Crampon for Hilbert geometries
	\cite[Lemma 8.3]{Cr09}, the arguments of Manning extend
	\cite{manning} allowing us to conclude 
%	, the proof of
%	Manning relating topological entropy to volume growth entropy in
%	nonpositive curvature \cite{manning}, combined with a technical
%	lemma of Crampon \cite[Lemma 8.3]{Cr09} applies to Hilbert
%	geometries with compact quotients. Since volume growth entropy
%	agrees with the critical exponent for Hilbert geometries with
%	compact quotients, 
%	we have 
	$\delta_\Gam = h_{top}$. 
	Then by the variational principle,  
	\begin{align*}
	  h_{top} \geq h_{\mu_{BM}} & = \lim_{n\to\infty} \frac1n
	  H_{\mu_{BM}}(\mathcal A_\phi^{(n)}) \geq \lim_{n\to\infty}
	  \frac1n \left( \delta_\Gam n -\log a \right) = \delta_\Gam =
	  h_{top}. 
	\end{align*}
      \end{proof}

      \ifextended
      \begin{corollary}
	The critical exponent of $\Gam$ acting on $\Om$ is equal to the topological entropy of the
	geodesic flow on $T^1M$.
	\label{cor:mme}
      \end{corollary}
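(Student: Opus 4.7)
The plan is to observe that the chain of inequalities assembled in the proof of Theorem \ref{lem:maxentropy} collapses to equality. That chain reads
\[
h_{top}(\phi) \geq h_{\mu_{BM}} \geq \delta_\Gam \geq h_{top}(\phi),
\]
where the first inequality is the variational principle, the second follows from the partition estimate of Lemma \ref{lem:mme} combined with $h$-expansiveness of $\phi^t$ (which lets one compute $h_{\mu_{BM}}$ using any single partition $\mathcal A$ of sufficiently small diameter), and the third is the upper bound $\delta_\Gam \geq h_{top}$ established in \cite{braytop}. Forcing all three inequalities to be equalities yields $h_{top}(\phi) = \delta_\Gam$ immediately.

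There is no substantive obstacle: the corollary is a direct readout of what was already proved, since $\mu_{BM}$ simultaneously attains the supremum in the variational principle and has entropy exactly $\delta_\Gam$. The only ingredient external to this paper is the inequality $\delta_\Gam \geq h_{top}$ from \cite{braytop}; all other inputs, in particular the existence and finiteness of $\mu_{BM}$, the estimate $\mu_{BM}(\alp) \leq a\,e^{-\delta_\Gam n}$ for $\alp\in \mathcal A_\phi^{(n)}$, and the identification $h_{\mu_{BM}} = h_{\mu_{BM}}(\phi,\mathcal A)$ from $h$-expansivity, are in place by the preceding results.
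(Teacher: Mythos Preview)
Your proposal is correct and matches the paper's approach exactly: the corollary is stated without proof in the paper because it is an immediate consequence of the chain of inequalities $h_{top} \geq h_{\mu_{BM}} \geq \delta_\Gam \geq h_{top}$ already displayed in the proof of Theorem \ref{lem:maxentropy}. You have correctly identified each ingredient and its source.
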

      \fi

      \bibliographystyle{alpha} 
      \bibliography{second_revisions_biblio}

      \bigskip
      \bigskip
      \noindent \textit{Department of Mathematics, University of Michigan, Ann Arbor 48109}

      \noindent 
      \href{mailto:hbray@umich.edu}{\texttt{hbray@umich.edu}}

      \end{document}